\numberwithin{equation}{section}
\theoremstyle{plain}
\newtheorem{theorem}{Theorem}[section]
\newtheorem{lemma}[theorem]{Lemma}
\newtheorem{corollary}[theorem]{Corollary}
\newtheorem{proposition}[theorem]{Proposition}
\theoremstyle{definition}
\newtheorem{definition}[theorem]{Definition}
\newtheorem{example}[theorem]{Example}
\theoremstyle{remark}
\newtheorem{remark}[theorem]{Remark}
\newtheorem{algorithm}[theorem]{Algorithm}
\renewcommand{\subset}{\subseteq}
\renewcommand{\d}{\,\text{\rm{}d}}
\newcommand{\dm}{\,\text{\rm{}d}\mu}
\newcommand{\Om}{\mathrm{\Omega}}
\def\N{\mathbb  N}
\def\R{\mathbb  R}
\def\supp{\operatorname{supp}}
\def\dist{\operatorname{dist}}
\def\conv{\operatorname{conv}}
\DeclareMathOperator{\sign}{sign}
\DeclareMathOperator{\ext}{ext}
\begin{document}

\title{The Largest-$K$-Norm for General Measure Spaces and a DC Reformulation for $L^0$-Constrained Problems in Function Spaces}

\author{
\name{Bastian Dittrich\textsuperscript{a}, Daniel Wachsmuth\textsuperscript{a}
\thanks{CONTACT {\tt bastian.dittrich@uni-wuerzburg.de, daniel.wachsmuth@uni-wuerzburg.de}}
}
\affil{\textsuperscript{a}Institut f\"ur Mathematik, Universit\"at W\"urzburg, 97074 W\"urzburg, Germany}
}

\maketitle

\begin{abstract}
We consider constraints on the measure of the support for integrable functions on arbitrary measure spaces.
It is shown that this non-convex and discontinuous constraint can be equivalently reformulated by the difference of two convex and continuous functions, namely the $L^1$-norm and the so-called largest-$K$-norm.
The largest-$K$-norm is studied and its convex subdifferential is derived.
A corresponding penalty method is proposed, and its numerical solution by a DC method is investigated.
Numerical experiments for two example problems, including a sparse optimal control problem, are presented.
\end{abstract}

\begin{keywords}
sparse optimal control, $L^0$-constraints, DC programming
\end{keywords}

\begin{amscode}
49K20, 49M20
\end{amscode}

\section{Introduction}

We are interested in solving optimal control problems with constraints on the measure of the support of the control.
These constraints are formulated in terms of the $L^0$-pseudo-norm, which is defined by
\[
 \|u\|_0 := \mu( \supp (u) ) = \mu \left( \{x\in \Omega \mid u(x) \ne 0\} \right),
\]
where $u: \Omega \to \R$ is measurable, and $(\Omega,\mathcal A, \mu)$ is a measure space.
The resulting optimal control problem can be written as the minimization problem
\begin{equation} \label{eq_intro_L0constrained}
\min_{u\in X} f(u)  \quad \text{subject to} \quad \|u\|_0 \le K,
\end{equation}
where $X$ is a Banach space of measurable functions from $\Omega$ to $\R$,  and $f: X \to \overline{\R}$ is convex and lower semicontinuous.

In the case of optimal control problems for partial differential equations,
$\mu$ is the Lebesgue measure on $\R^d$, and the underlying function space is usually chosen as $X= L^p(\Omega)$, $p \in (1,\infty)$.
Such problems were studied for instance in \cite{ItoKunisch2014,KaliseKunischSturm2018,Munch2008,Wachsmuth2019,Wachsmuth2023}.
Due to the lack of weak lower semicontinuity of $u\mapsto \|u\|_0$ in $L^p(\Omega)$-spaces,
standard existence proofs do not work. A simple example without solution is presented in \cite[Section 4.5]{Wachsmuth2019}.
In \cite{ItoKunisch2014,LouZhao2024} existence of solutions is shown under restrictive assumptions on the problem data. Nevertheless,
optimality conditions in terms of Pontryagin's maximum principle can be proven \cite{ItoKunisch2014,Wachsmuth2023}.
In addition, algorithms based on topological derivatives are available \cite{KaliseKunischSturm2018,Munch2008,Wachsmuthtop}.

Motivated by the difficulties that arise when the problem is set in $L^p(\Om)$-spaces,
we focus on optimization problems, where $X$ is compactly embedded into $L^1(\Omega)$.
Due to the compact embedding, we can prove existence of solutions, see \cref{TheoremMinGlobalSolution}.
However, neither the  Pontryagin's maximum principle nor the above mentioned optimization methods are applicable
in this setting.

In order to solve problem \eqref{eq_intro_L0constrained} in this setting numerically, we follow the ideas of \cite{Gotoh2017}, where optimization problems in $\R^n$ with cardinality constraints are studied.
They proposed a penalization scheme based on the largest-$K$-norm, which in the case of a general measure space $(\Omega,\mathcal A, \mu)$ can be defined as
\[
\vert u \vert_K := \sup_{A \in \mathcal{A}: \: \mu(A) \leq K} \int_A \vert u \vert \d\mu.
\]
As observed in \cite{Gotoh2017}, one has the equivalence
\begin{equation}\label{eq_intro_equivalence}
\Vert u \Vert_0 \leq K \quad \Leftrightarrow \quad \Vert u \Vert_1 - \vert u \vert_K = 0,
\end{equation}
which is true for every $u \in L^1(\Om)$ and every $\sigma$-finite measure space $(\Omega,\mathcal A, \mu)$, see \cref{Theo:ReformSepa}.
This justifies to solve a sequence of penalized problems
\begin{equation}\label{eq_intro_penalizedproblem}
\min_{u \in X} f(u) + \rho \left( \Vert u \Vert_1 - \vert u \vert_K \right), \quad \rho > 0,
\end{equation}
where $\rho$ is a penalization parameter.
The penalized problems \eqref{eq_intro_penalizedproblem} have a particular structure: the objective functional is a difference of convex functions (DC function),
which enables the use of specialized optimization methods \cite{Thi2018,Thi2014b}.

In the resulting DC-Algorithm, a subgradient of the largest-$K$-norm has to be computed in each step.
Hence, we investigate the subdifferential of $u \mapsto \vert u \vert_K$ in detail in \cref{Section:LargestKNorm} below.
A full characterization of the subdifferential in an atom-free measure space, e.g., Lebesgue measure space, can be found in \cref{theo:Subdiff}.
For numerical implementation, we need to discretize the problem. The discretized
problems are set in a purely atomic measure space. Hence, we also study this situation in \cref{sec_general_measure_space}.
Surprisingly, the proofs and computations are much more technical, and we only have a partial result about the subdifferential of the largest-$K$-norm.
The results for not necessarily atom-free measure spaces can also be applied to sparse signal recovery
problems, as for instance investigated in \cite{Blumensath2008,Wang2013,Grasmair2010,Ramlau2012,Ito2013}.

Convergence results for the DC-Algorithm in general Banach spaces are shown in \cref{Section:DCConvergence}, including an inexact convergence result.
\cref{Section:PenalizedProblem} is then devoted to the study of the $L^0$-constrained problem and its penalization.
We show existence of solutions for the original as well as the penalized problem and prove a classical penalty method result.
We investigate optimality conditions of the penalized problem and generalize the classical penalty method result to sequences of critical points instead of global minima.
The goal of \cref{Section:ExampleProblem} is to numerically solve a specific $L^0$-constrained problem.
Reformulating and penalizing this specific problem follows the ideas of the previous sections based on the largest-$K$-norm.
We report about the numerical results in \cref{section:NumExperiments}
and also consider an optimal control problem in \cref{Section:ControlProblem}.

\section*{Notation}

Throughout the whole paper we assume that  $(\Om, \mathcal{A},\mu)$ is a $\sigma$-finite measure space.
Hence, there are pairwise disjoint sets $(\Om_n)$ with $\mu(\Om_n) < \infty$ and $\Om = \bigcup_{n=1}^\infty \Om_n$.

With the notation
\[
\vert x \vert_0 := \begin{cases}
1 & \text{ if } x \neq 0, \\
0 & \text{ if } x = 0, \\
          \end{cases} \qquad x \in \R,
\]
we can write for any measurable $u: \Om \to \R$ the $L^0$-pseudo-norm as
\[
\Vert u \Vert_0 = \int_\Om \vert u(x) \vert_0 \dm(x).
\]

Let $X$ be a normed space with topological dual $X'$.
If $u \in X$ and $s \in X'$ we denote the dual pairing by $\left\langle s, u \right\rangle_{X',X} := s(u)$.
If the spaces $X$ and $X'$ are clear from the context we simply write $\langle \cdot , \cdot \rangle$.
We write $u_n \overset{X}{\to} u$, if $(u_n) \subseteq X$ converges to $u \in X$ with respect to the norm in $X$ or $u_n \overset{X}{\rightharpoonup} u$, if $(u_n) \subseteq X$ converges weakly to $u \in X$.
If the space $X$ is clear from the context we simply write $u_n \to u$ and $u_n \rightharpoonup u$ respectively.
Analogously, we write $s_n \rightharpoonup^* s$ if $(s_n) \subseteq X'$ converges weakly-* to $s \in X'$.

Let $f: X \to \overline{\R}$ be a function.
We call $f$ proper if $f(u) > - \infty$ for all $u \in X$ and if there exists some $u \in X$ with $f(u) < \infty$.
We say that $f$ is lower semicontinuous if $u_n \to u$ implies $f(u) \leq \liminf_{n \to \infty} f(u_n)$ for every $(u_n) \subseteq X$ and $u \in X$.
Analogously, we call $f$ weakly lower semicontinuous if $u_n \rightharpoonup u$ implies $f(u) \leq \liminf_{n \to \infty} f(u_n)$ for every $(u_n) \subseteq X$ and $u \in X$.
Moreover, we call $f$ coercive if $\Vert u_n \Vert_X \to \infty$ implies $f(u_n) \to \infty$ for every sequence $(u_n) \subseteq X$ .
Let $Y \subseteq X$ be a normed space.
If $f$ is convex on $Y$ we denote the convex subdifferential of $f$ at $u \in Y$ with respect to the space $Y$ as $\partial^Y f(u)$, i.e.,
\[
\partial^Y f(u) := \left\lbrace s \in Y' \mid f(v) - f(u) \geq \langle s, v - u \rangle_{Y',Y} \ \forall v \in Y \right\rbrace.
\]
If the space $Y$ is clear from the context or if $Y = X$ we omit the superscript $Y$ in the notation.

\section{Preliminary Results}\label{Section:Preliminary}

\subsection{DC Reformulation in Finite Dimensions}

First, let us recall the definition of DC functions.

\begin{definition}
Let $X$ be a vector space, $f: X \to \overline{\R}$  a mapping. Then $f$ is called DC function (difference-of-convex function)
if there are convex functions $f_1:X \to \bar\R$  and $f_2:X\to \R$ such that $f = f_1-f_2$.
\end{definition}

Let us briefly review the developments of \cite{Gotoh2017}.
There, the cardinality-constrained problem
\begin{align} \label{FiniteProblem}
\min_{x \in \R^n} f(x) \quad \text{subject to} \quad \Vert x \Vert_0 \leq K,\: x \in S
\end{align}
is studied, where
\[
\Vert x \Vert_0 := \left\vert \{ i \in \{1,...,n\} \mid x_i \neq 0 \} \right\vert
\]
is the cardinality of the support of $x \in \R^n$,
$K \in \{1,...,n\}$ is given,
$f: \R^n \to \R$ is a DC function, and $S \subset \R^n$ is a closed convex set.
Based on the largest-$K$-norm
\[
\vert x \vert_K := \vert x_{(1)} \vert + \hdots + \vert x_{(K)} \vert = \max_{I \subseteq \{1,...,n\}: \vert I \vert \leq K} \sum_{i \in I} \vert x_i \vert ,
\]
where $x_{(i)}$ is the $i$-th largest entry of $x$ in absolute value, they consider the problem
\[
\min_{x \in \R^n} f(x) \quad \text{subject to} \quad \Vert x \Vert_1 - \vert x \vert_K = 0,\: x \in S,
\]
which is equivalent to \eqref{FiniteProblem}
due to the following theorem.

\begin{theorem} (\cite[Theorem 1]{Gotoh2017})
For every $x \in \R^n$ and arbitrary integers $K, h  $ with $1 \leq K < h \leq n$ the following statements are equivalent:
\begin{enumerate}
\item $\Vert x \Vert_0 \leq K$,
\item $\vert x \vert_h - \vert x \vert_K = 0$,
\item $\Vert x \Vert_1 - \vert x \vert_K = 0$.
\end{enumerate}
\end{theorem}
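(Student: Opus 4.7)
The plan is to reorder the entries of $x$ by absolute value and establish the three statements cyclically: $(1) \Rightarrow (3) \Rightarrow (2) \Rightarrow (1)$. To this end, fix a permutation such that $|x_{(1)}| \geq |x_{(2)}| \geq \hdots \geq |x_{(n)}|$. Then by definition $\vert x\vert_K = \sum_{i=1}^{K}|x_{(i)}|$, $\vert x\vert_h = \sum_{i=1}^{h}|x_{(i)}|$, and $\Vert x\Vert_1 = \sum_{i=1}^{n}|x_{(i)}|$. In particular, since all summands are nonnegative and $K < h \leq n$, the chain of inequalities $\vert x\vert_K \leq \vert x\vert_h \leq \Vert x\Vert_1$ holds without any assumption on $x$. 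This is the central observation that drives the entire argument.

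For $(1) \Rightarrow (3)$, note that $\Vert x\Vert_0 \leq K$ means that at most $K$ entries of $x$ are nonzero, so $|x_{(i)}| = 0$ for all $i > K$. Hence $\Vert x\Vert_1 = \sum_{i=1}^{K}|x_{(i)}| = \vert x\vert_K$. For $(3) \Rightarrow (2)$, the assumption together with the inequality chain above yields
\[
\vert x\vert_K \leq \vert x\vert_h \leq \Vert x\Vert_1 = \vert x\vert_K,
\]
which forces equality throughout, in particular $\vert x\vert_h - \vert x\vert_K = 0$.

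For $(2) \Rightarrow (1)$, observe that
\[
0 = \vert x\vert_h - \vert x\vert_K = \sum_{i=K+1}^{h}|x_{(i)}|
\]
is a sum of nonnegative terms, so each $|x_{(i)}|$ for $i = K+1,\hdots,h$ must vanish. By the ordering, this implies $|x_{(i)}| = 0$ for all $i \geq K+1$, so $x$ has at most $K$ nonzero components, i.e.\ $\Vert x\Vert_0 \leq K$. I do not expect a real obstacle here; the argument is essentially bookkeeping on the reordered vector, and the only point requiring a moment of care is the use of $K < h$ to guarantee that the difference in $(2)$ involves at least one nontrivial term, which is what allows us to conclude sparsity rather than merely a weaker inequality.
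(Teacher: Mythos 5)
Your proof is correct. The paper itself does not prove this statement; it is quoted verbatim from \cite[Theorem 1]{Gotoh2017} as background, so there is no in-paper argument to match step for step. Your cyclic scheme $(1)\Rightarrow(3)\Rightarrow(2)\Rightarrow(1)$, together with the sorted representation $\vert x\vert_K=\sum_{i=1}^K\vert x_{(i)}\vert$ and the monotone chain $\vert x\vert_K\le\vert x\vert_h\le\Vert x\Vert_1$, is the standard elementary argument and it is complete: the only nontrivial implication, $(2)\Rightarrow(1)$, correctly exploits that $K<h$ with $K,h$ integers forces the telescoping difference to contain the term $\vert x_{(K+1)}\vert$, and the decreasing ordering then kills all later entries. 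It is worth noting how this compares with the paper's own generalizations (\cref{GenTheorem1} and \cref{Theo:ReformSepa}): there the implication chain runs $(1)\Rightarrow(3)\Rightarrow$ ``$\vert u\vert_H=\vert u\vert_K$ for all $H$'' $\Rightarrow(2)$, and the return to $(1)$ requires an attainment result for the supremum defining $\vert\cdot\vert_K$ (\cref{Goal1}, \cref{Theo:GeneralExist}) rather than a sorting argument, which is unavailable in a general measure space. Moreover, in the presence of atoms the analogue of your implication $(2)\Rightarrow(1)$ genuinely fails (see \cref{Remark:Reform} and the example following \cref{Theo:ReformSepa}); your proof makes transparent why the finite-dimensional statement survives, namely that integrality of $K$ and $h$ guarantees that passing from $K$ to $h$ picks up at least one whole additional coordinate (atom), which is exactly the term whose vanishing yields sparsity.
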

Based on this result, the associated penalized problem
\begin{align}\label{FiniteProblemPen}
\min_{x \in S} f(x) + \rho \left( \Vert x \Vert_1 - \vert x \vert_K \right), \quad \rho > 0,
\end{align}
is used as an approximation.
Since $x \mapsto \vert x \vert_K$ is convex, the objective functional in \eqref{FiniteProblemPen}
is  a DC function.
Then the penalized problems can be solved by a DC-Algorithm in order to solve the original problem \eqref{FiniteProblem}.
For the convergence of the penalty method, they have the following result.

\begin{theorem} (\cite[Theorem 2]{Gotoh2017})
Let $0 < \rho_1 < \rho_2 < ...$ with $\lim_{t \to \infty} \rho_t = \infty$ and suppose that $x^t \in \R^n$ is a global minimizer of the penalized problem \eqref{FiniteProblemPen} with penalty parameter $\rho = \rho_t$. Then every limit point $x^*$ of the sequence $(x^t)$ is a global solution of the cardinality-constrained problem \eqref{FiniteProblem}.
\end{theorem}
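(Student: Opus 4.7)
The plan is to run the classical convergence argument for exterior penalty methods, using the equivalence from the preceding theorem to identify the penalty term as an exact indicator of the cardinality constraint.

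First I would fix a limit point $x^*$ of $(x^t)$, pass to a subsequence $x^{t_k} \to x^*$, and observe that $x^* \in S$ since $S$ is closed. Next I would pick any feasible point $\bar x \in S$ with $\|\bar x\|_0 \le K$; by the preceding equivalence this gives $\|\bar x\|_1 - |\bar x|_K = 0$. Plugging $\bar x$ into the penalized problem and exploiting the global optimality of $x^t$, I obtain
\[
f(x^t) + \rho_t\bigl(\|x^t\|_1 - |x^t|_K\bigr) \le f(\bar x) + \rho_t\bigl(\|\bar x\|_1 - |\bar x|_K\bigr) = f(\bar x).
\]
Since $|x|_K \le \|x\|_1$ for every $x \in \R^n$, the penalty term is nonnegative, so in particular $f(x^t) \le f(\bar x)$ for every $t$.

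Rearranging the same inequality yields
\[
0 \le \|x^{t_k}\|_1 - |x^{t_k}|_K \le \frac{f(\bar x) - f(x^{t_k})}{\rho_{t_k}}.
\]
Because $f$ is DC on $\R^n$ and therefore continuous, $f(x^{t_k}) \to f(x^*)$, so the numerator is bounded, while $\rho_{t_k} \to \infty$. Hence $\|x^{t_k}\|_1 - |x^{t_k}|_K \to 0$, and passing to the limit with continuity of $\|\cdot\|_1$ and $|\cdot|_K$ gives $\|x^*\|_1 - |x^*|_K = 0$. Invoking the equivalence from the preceding theorem once more, this shows $\|x^*\|_0 \le K$, so $x^*$ is feasible for \eqref{FiniteProblem}.

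Finally I would pass to the limit in $f(x^{t_k}) \le f(\bar x)$ to get $f(x^*) \le f(\bar x)$ for every feasible $\bar x$, which is the claimed global optimality. The one step that requires a bit of care, and that I expect to be the only subtle point, is the argument that the penalty term vanishes: it hinges on the sign $\|x\|_1 - |x|_K \ge 0$ and on having $f(x^{t_k})$ bounded along the convergent subsequence, both of which are free here but must be noted explicitly because without them one could not rule out a persistent positive constraint violation surviving in the limit.
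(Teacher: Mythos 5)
Your proposal is correct and is the standard exterior-penalty argument; the paper itself only cites this result from Gotoh et al.\@ without proof, but its own generalization (the penalty convergence theorem in the function-space section) is proved by exactly the same chain of inequalities: global optimality of $x^t$ against a feasible comparison point, nonnegativity of $\|\cdot\|_1 - |\cdot|_K$, division by $\rho_t$ to force the penalty term to zero, and continuity of the penalty to conclude feasibility of the limit point. The only cosmetic difference is that the paper's infinite-dimensional version controls the numerator via boundedness of $f$ from below and weak lower semicontinuity, whereas you use continuity of the DC function $f$ along the convergent subsequence, which is the natural choice in $\R^n$ and equally valid.
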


Moreover, in the case of $f$ being continuously differentiable with Lipschitz-continuous gradient and, e.g., $S = \R^n$, they provide an exact penalty result \cite[see][Theorem 3]{Gotoh2017}.
We will show a similar result in \cref{cor_exact_penalty}.

\subsection{Atom-Free Measure Spaces}

Let us recall the definition and basic properties of atom-free measure spaces.

\begin{definition}
A set $A \in \mathcal{A}$ with $\mu(A) > 0$ is called atom, if every set $B \in \mathcal{A}$ with $B \subset A$ either satisfies $\mu(B) = 0$ or $\mu(A \setminus B)=0$.
The measure space $(\Om, \mathcal{A}, \mu)$ is called atom-free if there are no atoms.
The measure space $(\Om, \mathcal{A}, \mu)$ is called purely atomic if for every $B \in \mathcal{A}$ with $\mu(B) > 0$ there exists an atom $A \in \mathcal{A}$ with $A \subset B$.
\end{definition}

These definitions are consistent with \cite[211I-K]{Fremlin2003}.
We will frequently use the following results.

\begin{theorem}[{\cite[215D]{Fremlin2003}}]\label{Theorematomfree}
Let $(\Om, \mathcal{A}, \mu)$ be atom-free.
Then for every $A \in \mathcal{A}$ with $\mu(A) < \infty$ and $t \in [0, \mu(A)]$ there exists a set $B \in \mathcal{A}$ with $B \subseteq A$ and $\mu(B)=t$.
\end{theorem}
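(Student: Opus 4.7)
The plan is to reduce this classical intermediate-value property of non-atomic measures to a small-subset lemma and then to run a greedy exhaustion. The small-subset lemma I will need states: for every $C \in \mathcal{A}$ with $0 < \mu(C) < \infty$ and every $\varepsilon > 0$ there exists $C' \in \mathcal{A}$ with $C' \subseteq C$ and $0 < \mu(C') \leq \varepsilon$. To prove it, I set $\alpha := \inf\{\mu(D) : D \in \mathcal{A},\, D \subseteq C,\, \mu(D) > 0\}$; if $\alpha > 0$, I pick $D$ with $\mu(D) < 2\alpha$, and the atom-free assumption produces a measurable decomposition $D = D_1 \cup D_2$ with $D_1 \cap D_2 = \emptyset$ and $\mu(D_1), \mu(D_2) > 0$, whence $\min(\mu(D_1), \mu(D_2)) < \alpha$, contradicting the definition of $\alpha$. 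Hence $\alpha = 0$, which gives the lemma.

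For the main construction I may assume $0 < t < \mu(A)$. I build an increasing chain $\emptyset = B_0 \subseteq B_1 \subseteq \cdots \subseteq A$ with $\mu(B_n) \leq t$ recursively: given $B_n$ with $\mu(B_n) < t$, set
\[
\varepsilon_n := t - \mu(B_n), \qquad \alpha_n := \sup\{\mu(C) : C \in \mathcal{A},\, C \subseteq A \setminus B_n,\, \mu(C) \leq \varepsilon_n\},
\]
pick $C_n \subseteq A \setminus B_n$ with $\mu(C_n) \leq \varepsilon_n$ and $\mu(C_n) \geq \alpha_n/2$, and let $B_{n+1} := B_n \cup C_n$. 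Put $B := \bigcup_n B_n$; then $\mu(B) = \lim_n \mu(B_n) \leq t$. Note that $\alpha_n$ is strictly positive at every step, since the small-subset lemma applied to $A \setminus B_n$ (which has measure $\mu(A) - \mu(B_n) \geq \varepsilon_n > 0$) produces admissible candidates of positive measure.

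To close the argument I suppose for contradiction that $\delta := t - \mu(B) > 0$. The $C_n$ are pairwise disjoint by construction, so $\sum_n \mu(C_n) = \mu(B) < \infty$, and in particular $\mu(C_n) \to 0$, hence $\alpha_n \to 0$. On the other hand $\mu(A \setminus B) \geq \delta > 0$, and the small-subset lemma gives $\widetilde C \in \mathcal{A}$ with $\widetilde C \subseteq A \setminus B$ and $0 < \mu(\widetilde C) \leq \delta$. Since $B_n \subseteq B$ and $\mu(\widetilde C) \leq \delta \leq \varepsilon_n$, the set $\widetilde C$ is admissible in the definition of $\alpha_n$, so $0 < \mu(\widetilde C) \leq \alpha_n$ for every $n$, contradicting $\alpha_n \to 0$. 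Therefore $\mu(B) = t$, which is the claim.

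The main obstacle is the choice rule for $C_n$: one must pick it with measure bounded below by \emph{half the currently available supremum} $\alpha_n/2$, not merely by some positive number, because only through this coupling does the summability $\sum_n \mu(C_n) \leq t$ force $\alpha_n \to 0$ and thereby exclude a residual gap. An alternative via Zorn's lemma on $\{B \subseteq A : \mu(B) \leq t\}$ ordered by inclusion modulo null sets is possible, but it requires extracting a countable cofinal increasing sequence from an arbitrary chain, and I prefer the fully constructive countable version above.
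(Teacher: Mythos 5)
The paper does not actually prove this statement: it is quoted from \cite[215D]{Fremlin2003} and used as a black box (only the subsequent \cref{Coratomfree} receives a proof). Your argument is therefore not competing with anything in the text, but it is a correct, self-contained proof, and it is essentially the classical exhaustion argument behind Fremlin's 215D. The small-subset lemma is sound: the paper's definition of atom gives exactly the splitting $D = D_1 \cup D_2$ into disjoint pieces of positive measure that you invoke, and $\mu(D) < 2\alpha$ forces the smaller piece strictly below $\alpha$, contradicting the definition of the infimum. The greedy step is also sound, and your emphasis on choosing $C_n$ with $\mu(C_n) \geq \alpha_n/2$ is precisely the right point to stress: with an arbitrary positive choice the residual set $A \setminus B$ could retain mass, and it is only the coupling to the supremum that converts the summability of $\sum_n \mu(C_n) \leq t$ into $\alpha_n \to 0$ and thereby excludes a gap. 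Two cosmetic remarks, neither affecting correctness: you should state explicitly that if $\mu(B_{n+1}) = t$ at some finite stage the construction terminates and $B := B_{n+1}$ already works, since your recursion is only defined while $\mu(B_n) < t$; and in the final step the chain $\mu(\widetilde C) \leq \delta \leq \varepsilon_n$ silently uses $\mu(B_n) \leq \mu(B)$, which holds because the $B_n$ increase to $B$ --- worth a word. What your route buys over the bare citation is a proof that visibly uses only the paper's own definition of atom and elementary continuity of measures, which fits the level of detail the paper otherwise maintains.
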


\begin{corollary}\label{Coratomfree}
Let $(\Om, \mathcal{A}, \mu)$ be an atom-free and $\sigma$-finite measure space.
Then for every $A \in \mathcal{A}$ and $t \in [0, \mu(A)]$ there exists a set $B \in \mathcal{A}$ with $B \subseteq A$ and $\mu(B)=t$.
\end{corollary}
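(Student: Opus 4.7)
The plan is to reduce to Theorem 3.5 by exhausting $A$ with a pairwise disjoint sequence of finite-measure pieces supplied by $\sigma$-finiteness. The only case not already handled by Theorem 3.5 is $\mu(A) = \infty$, so I would split the argument on whether $\mu(A)$ is finite or infinite.

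First I would dispose of the easy cases. If $\mu(A) < \infty$, Theorem 3.5 applies directly. If $\mu(A) = \infty$ and $t = \infty$, take $B = A$. The only case requiring work is $\mu(A) = \infty$ and $t \in [0,\infty)$. Using the $\sigma$-finite decomposition $\Omega = \bigcup_{n=1}^\infty \Omega_n$ from the Notation section, set $A_n := A \cap \Omega_n$. Then the $A_n$ are pairwise disjoint, $\mu(A_n) \leq \mu(\Omega_n) < \infty$, and $\sum_{n=1}^\infty \mu(A_n) = \mu(A) = \infty$. Define the partial sums $s_0 := 0$ and $s_n := \sum_{k=1}^n \mu(A_k)$; these increase to $\infty$, so there is a smallest $N \in \N$ with $s_N \geq t$, and by minimality $s_{N-1} < t \leq s_N$, i.e., $t - s_{N-1} \in (0, \mu(A_N)]$.

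Next I would apply Theorem 3.5 to the finite-measure set $A_N$ with parameter $t - s_{N-1}$ to obtain $B' \subseteq A_N$ with $\mu(B') = t - s_{N-1}$. Setting
\[
B := \Bigl(\bigcup_{k=1}^{N-1} A_k\Bigr) \cup B',
\]
disjointness of the $A_k$ and $B' \subseteq A_N$ give $\mu(B) = s_{N-1} + (t - s_{N-1}) = t$, and $B \subseteq A$ by construction. The case $t = 0$ is covered by $B = \emptyset$.

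I do not expect any real obstacle here; the whole point is that $\sigma$-finiteness lets us pass from the finite-measure statement of Theorem 3.5 to arbitrary measurable sets by picking up full finite-measure blocks until one remains that needs to be trimmed. The only place to be a bit careful is the indexing of the partial sums and checking that the target value $t - s_{N-1}$ genuinely lies in the interval $[0,\mu(A_N)]$ to which Theorem 3.5 applies.
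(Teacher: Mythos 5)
Your proposal is correct and follows essentially the same route as the paper: both use the $\sigma$-finite decomposition to find finitely many blocks $\Om_1\cap A,\dots,\Om_N\cap A$ whose total measure is finite and at least $t$, then invoke \cref{Theorematomfree}. The only cosmetic difference is that the paper applies \cref{Theorematomfree} once to the whole union $\bigcup_{n=1}^N(\Om_n\cap A)$, whereas you keep the first $N-1$ blocks intact and trim only the last one; both are fine.
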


\begin{proof}
We only have to consider the case $t < \mu(A) = \infty$.
Let $\Om = \bigcup_{n=1}^\infty \Om_n$ be a decomposition of $\Om$ in pairwise disjoint sets $\Om_n \in \mathcal{A}$ with $0 < \mu(\Om_n) < \infty$.
Since $t < \infty$, there exists $N \in \N$ such that $t \leq \mu\left(\bigcup_{n=1}^N (\Om_n \cap A)\right) < \infty$.
Applying \cref{Theorematomfree} to the set $\bigcup_{n=1}^N (\Om_n \cap A)$ then shows the claim.
\end{proof}

\section{The Largest-$K$-Norm} \label{Section:LargestKNorm}

In this section, we will investigate basic properties of the largest-$K$-norm.

\begin{definition}[largest-$K$-norm] \label{DefkNorm}
For every $u \in L^1(\Om)$ and $K \in [0, \mu(\Om)]$ we define
\[
\vert u \vert_K := \sup_{A \in \mathcal{A}: \: \mu(A) \leq K} \int_A \vert u \vert \d\mu.
\]
\end{definition}

Clearly, $\vert\cdot\vert_K \equiv 0$ for $K = 0$ and  $\vert\cdot\vert_K = \Vert\cdot\Vert_1$ for $K=\mu(\Om)$.
In addition, $K \mapsto \vert u \vert_K$ is monotonically increasing for fixed  $u \in L^1(\Om)$.
As $\vert \cdot \vert_K$ is the supremum of convex and lower semicontinuous functions, it is convex and lower semicontinuous.

The connection to the $L^1(\Om)$-norm directly implies continuity.
\begin{lemma}\label{Lem:Cont}
Let $K \in [0, \mu(\Om)]$.
Then the function $u \mapsto \vert u \vert_K$ is a continuous semi-norm on $L^1(\Om)$.
\end{lemma}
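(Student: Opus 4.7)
The plan is to verify the four defining properties of a semi-norm directly from the supremum definition, and then obtain continuity essentially for free from the bound $|u|_K \le \|u\|_1$ combined with sublinearity.

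First I would check the algebraic properties. Non-negativity and $|0|_K = 0$ are immediate, since every integrand $|u|$ is non-negative and the admissible set $\{A \in \mathcal A : \mu(A) \le K\}$ contains $\emptyset$. Positive homogeneity follows by pulling $|\alpha|$ out of $\int_A |\alpha u|\dm$ for each admissible $A$ and then taking the supremum. For the triangle inequality, I would fix $u,v \in L^1(\Om)$ and an admissible $A$, use the pointwise inequality $|u+v| \le |u|+|v|$ to get
\[
\int_A |u+v| \dm \le \int_A |u|\dm + \int_A |v|\dm \le |u|_K + |v|_K,
\]
and take the supremum over $A$.

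Next I would observe the trivial bound $|u|_K \le \|u\|_1$, which holds because for every admissible $A$ one has $\int_A |u|\dm \le \int_\Om |u|\dm$. Combining this with the triangle inequality and positive homogeneity (the standard reverse triangle trick) yields
\[
\bigl| |u|_K - |v|_K \bigr| \le |u-v|_K \le \|u-v\|_1
\]
for all $u,v \in L^1(\Om)$, so $u \mapsto |u|_K$ is $1$-Lipschitz on $L^1(\Om)$, hence continuous.

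There is no real obstacle here; the only mildly subtle point is that $K$ may equal $\mu(\Om)=\infty$, but this is covered uniformly by the definition, since the admissible class is then all of $\mathcal A$ and the argument above goes through verbatim. The proof is essentially two short paragraphs of direct verification.
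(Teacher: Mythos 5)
Your proposal is correct and follows essentially the same route as the paper: verify the semi-norm axioms directly from the definition, then deduce Lipschitz continuity from the reverse triangle inequality together with the bound $\vert w \vert_K \leq \Vert w \Vert_1$. The paper merely states the algebraic properties as "easy to verify" where you spell them out, so there is no substantive difference.
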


\begin{proof}
It is easy to verify that $\vert \cdot \vert_K$ satisfies the triangle inequality and  is absolutely homogeneous.
Let $(u_n) \subseteq L^1(\Om)$ and $u \in L^1(\Om)$ with $\Vert u_n - u \Vert_1 \to 0$.
Then
\[
\vert \vert u_n \vert_K - \vert u \vert_K \vert \leq \vert u_n - u \vert_K \leq \Vert u_n - u \Vert_1,
\]
which shows the claim.
\end{proof}

The main goal of this section is to show that the supremum in \cref{DefkNorm} is attained, which is surprisingly difficult.
First, we state a sufficient condition that a set $A$ realizes the supremum.
As it turns out, upper level sets play an important role for the largest-$K$-norm.

\begin{lemma} \label{lem_omega_t}
Let $u \in L^1(\Om)$. Define for every $t \in [0, \infty)$ the sets
\[
\Om_{>t} := \{x \in \Om \mid \vert u(x) \vert > t \}, \quad
\Om_{\geq t} := \{x \in \Om \mid \vert u(x) \vert \geq t \}
.
\]
Then the function $[0, \infty) \ni t \mapsto \mu(\Om_{>t})$ is monotonically decreasing and continuous from the right, and the function $[0, \infty) \ni t \mapsto \mu(\Om_{\geq t})$ is monotonically decreasing and continuous from the left.
\end{lemma}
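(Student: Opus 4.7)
The plan is to prove each of the four assertions by a standard continuity-of-measure argument on the level sets, using $u \in L^1(\Omega)$ only for a Markov-type bound needed in one case.

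First, monotonicity is immediate from the set inclusions: if $s \leq t$, then $\Omega_{>t} \subseteq \Omega_{>s}$ and $\Omega_{\geq t} \subseteq \Omega_{\geq s}$, so $\mu$-monotonicity gives that both $t \mapsto \mu(\Omega_{>t})$ and $t \mapsto \mu(\Omega_{\geq t})$ are decreasing.

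For right-continuity of $t \mapsto \mu(\Omega_{>t})$, I would take a sequence $t_n \downarrow t$ and observe that the sets $\Omega_{>t_n}$ form an increasing chain (since $t_n$ decreases, $\Omega_{>t_n}$ grows), with $\bigcup_n \Omega_{>t_n} = \Omega_{>t}$: any $x$ with $\vert u(x)\vert > t$ satisfies $\vert u(x)\vert > t_n$ for all $n$ sufficiently large. Continuity of the measure from below then yields $\mu(\Omega_{>t_n}) \to \mu(\Omega_{>t})$, which is right-continuity.

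For left-continuity of $t \mapsto \mu(\Omega_{\geq t})$, I would take $t_n \uparrow t$. For $t = 0$ the statement is vacuous, so assume $t > 0$, and without loss of generality $t_n > 0$ for all $n$. The sets $\Omega_{\geq t_n}$ are decreasing with $\bigcap_n \Omega_{\geq t_n} = \Omega_{\geq t}$: one inclusion is clear, and if $\vert u(x)\vert \geq t_n$ for every $n$, passing to the limit gives $\vert u(x)\vert \geq t$. To invoke continuity of measure from above I need $\mu(\Omega_{\geq t_1}) < \infty$, which follows from Markov's inequality
\[
\mu(\Omega_{\geq t_1}) \leq \frac{1}{t_1}\int_\Omega \vert u \vert \dm = \frac{\Vert u \Vert_1}{t_1} < \infty,
\]
using $u \in L^1(\Omega)$ and $t_1 > 0$. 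Continuity of the measure from above then gives $\mu(\Omega_{\geq t_n}) \to \mu(\Omega_{\geq t})$, proving left-continuity.

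The only subtle step is the finiteness check needed to apply continuity from above in the left-continuity argument; this is where integrability of $u$ enters, and everything else is a direct consequence of the monotone convergence of the level sets.
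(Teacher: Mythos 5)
Your proof is correct and follows essentially the same route as the paper: monotonicity from the set inclusions, continuity from below for the right-continuity of $t \mapsto \mu(\Om_{>t})$, and continuity from above for the left-continuity of $t \mapsto \mu(\Om_{\geq t})$, with the same Markov-inequality bound $\mu(\Om_{\geq t_1}) \leq t_1^{-1}\Vert u \Vert_1 < \infty$ justifying the finiteness hypothesis. Your explicit remark that left-continuity at $t=0$ is vacuous is a minor point the paper handles implicitly by restricting to $s \in (0,\infty)$.
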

\begin{proof}
First, note that for any $s,t \in [0, \infty)$ with $s < t$ it holds $\Om_{>t} \subset \Om_{>s}$ and $\Om_{\geq t} \subset \Om_{\geq s}$.
Let $t \in [0, \infty)$, $s \in (0, \infty)$. Take sequences $(s_n)$ and $(t_n)$ with $t_n \searrow t$ and $s_n \nearrow s$.
W.l.o.g. we can assume that the sequences $(t_n)$ and $(s_n)$ are monotone and $s_1 > 0$.
Then the claim follows directly from the continuity of the measure from above and below since
\begin{align*}
\Om_{>t} = \bigcup_{n \in \N} \Om_{>t_n}, \qquad \Om_{\geq s} = \bigcap_{n \in \N} \Om_{\geq s_n},
\end{align*}
where we have used that $ \mu( \Om_{\geq s_1} ) \le s_1^{-1} \|u\|_1 < \infty$.
\end{proof}

Now we are in the position to prove a sufficient condition that a given set $A$ realizes the supremum in the definition of the largest-$K$-norm.

\begin{lemma}\label{lem_goal1_suff}
Let $u \in L^1(\Om)$ and $K \in (0, \mu(\Om))$ be given.
Suppose there exist $t\ge0$ and $A\in \mathcal A$ such that $\Om_{>t} \subset A \subset \Om_{\ge t}$ and $\mu(A)=K$.
Then we have $\vert u \vert_K = \int_{A} \vert u \vert \d\mu$.
\end{lemma}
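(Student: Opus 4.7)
The plan is to use a direct \emph{exchange argument}: given any admissible set $B\in\mathcal A$ with $\mu(B)\le K$, I will show
\[
\int_B |u|\dm \le \int_A |u|\dm,
\]
which together with the trivial upper bound $\int_A|u|\dm\le |u|_K$ (valid because $\mu(A)=K$) yields equality.

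First I would reduce the comparison to the symmetric difference. Since $u\in L^1(\Om)$ all integrals are finite, so
\[
\int_A |u|\dm - \int_B|u|\dm = \int_{A\setminus B}|u|\dm - \int_{B\setminus A}|u|\dm.
\]
Now the hypothesis $\Om_{>t}\subset A\subset \Om_{\ge t}$ is exploited pointwise. On $A\setminus B\subset A\subset\Om_{\ge t}$ we have $|u|\ge t$, and on $B\setminus A\subset \Om\setminus A \subset \Om\setminus\Om_{>t}$ we have $|u|\le t$. Hence
\[
\int_{A\setminus B}|u|\dm \ge t\,\mu(A\setminus B),\qquad \int_{B\setminus A}|u|\dm \le t\,\mu(B\setminus A).
\]

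Next I would compare the two measures. From $\mu(A)=\mu(A\cap B)+\mu(A\setminus B)=K$ and $\mu(B)=\mu(A\cap B)+\mu(B\setminus A)\le K$ (with $\mu(A\cap B)\le \mu(A)=K<\infty$, so the subtraction is legitimate) we obtain
\[
\mu(A\setminus B)\ge \mu(B\setminus A).
\]
Combining the two inequalities gives $\int_{A\setminus B}|u|\dm - \int_{B\setminus A}|u|\dm \ge t\bigl(\mu(A\setminus B)-\mu(B\setminus A)\bigr)\ge 0$, which is what was required.

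I do not expect any real obstacle here; the only minor subtlety is making sure the rearrangement of integrals is valid, which is handled by $u\in L^1(\Om)$ and $\mu(A)=K<\mu(\Om)<\infty$ forcing $\mu(A\cap B)<\infty$. The argument also makes transparent why the level-set structure $\Om_{>t}\subset A\subset \Om_{\ge t}$ is the natural sufficient condition: swapping any mass of $B$ lying where $|u|\le t$ for mass of $A$ lying where $|u|\ge t$ cannot decrease the integral.
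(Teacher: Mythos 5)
Your proof is correct and is essentially the paper's own argument: the same reduction to the symmetric difference, the same pointwise bounds $|u|\ge t$ on $A\setminus B$ and $|u|\le t$ on $B\setminus A$, and the same measure comparison $\mu(A\setminus B)\ge\mu(B\setminus A)$. The only (harmless) slip is the parenthetical claim $\mu(\Om)<\infty$, which is not assumed; what you actually need is $\mu(A\cap B)\le\mu(A)=K<\infty$, which holds because $K$ is a finite real number.
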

\begin{proof}

Let $t\ge0$ and $A\in \mathcal A$ such that $\Om_{>t} \subset A \subset \Om_{\ge t}$ and $\mu(A)=K$.
Take $B\in \mathcal A$ with $\mu(B) \leq K$.
Then we have
\[
\begin{split}
\int_A \vert u \vert \dm - \int_B \vert u \vert \dm
&= \int_{A \setminus B} \vert u \vert \dm - \int_{B \setminus A} \vert u \vert \dm \\
&\ge t( \mu( A \setminus B) - \mu(B\setminus A) ) \\
&= t( \mu(A) - \mu(A\cap B) - ( \mu(B) - \mu(A\cap B))) \\
&= t( K - \mu(B)) \ge0.
\end{split}
\]
This proves $\int_A \vert u \vert \dm = \vert u \vert_K$.
\end{proof}

We start our investigation of the largest-$K$-norm with the special case of an atom-free  measure space,
which covers the case of the Lebesgue measure space.
Later on, we will
consider the general situation, which includes
finite-dimensional problems and problems set in sequence spaces,
where the underlying measure is a weighted counting measure.
Most of the following results hold trivially for $K \in \{0, \mu(\Om)\}$. Hence, we only consider $K \in (0, \mu(\Om))$.

\subsection{The Atom-Free Case}\label{section:atom-free}

For atom-free measure spaces, $u \mapsto \vert u \vert_K$ is a norm.

\begin{theorem}\label{Theo:Norm}
Let $(\Om, \mathcal{A}, \mu)$ be atom-free. For every $K \in (0, \mu(\Om))$ the function $\vert \cdot \vert_K: L^1(\Om) \to \R$ is a norm on the space $L^1(\Om)$.
\end{theorem}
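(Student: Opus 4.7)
From \cref{Lem:Cont} we already know that $\vert \cdot \vert_K$ is a continuous semi-norm on $L^1(\Om)$, so the only missing ingredient is definiteness: $\vert u \vert_K = 0$ must force $u = 0$ in $L^1(\Om)$. My plan is to prove the contrapositive by producing, for any $u$ that does not vanish almost everywhere, a measurable test set $B$ with $0 < \mu(B) \le K$ on which the integral of $\vert u \vert$ is strictly positive.

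Concretely, I would start from $u \in L^1(\Om)$ with $\mu(\supp u) > 0$. By a standard exhaustion argument, there is some $t > 0$ such that $\mu(\Om_{\ge t}) > 0$, where $\Om_{\ge t} = \{\vert u \vert \ge t\}$ as in \cref{lem_omega_t}; indeed $\{\vert u \vert > 0\} = \bigcup_{n \in \N} \Om_{\ge 1/n}$, so at least one of these level sets has positive measure. Now I invoke the hypothesis $K \in (0, \mu(\Om))$ together with the atom-free assumption via \cref{Coratomfree}: since the measure space is $\sigma$-finite and atom-free, I can choose a measurable $B \subseteq \Om_{\ge t}$ with
\[
\mu(B) = \min\{K, \mu(\Om_{\ge t})\} \in (0, K].
\]
This set is admissible in the supremum defining $\vert u \vert_K$, and the pointwise bound $\vert u \vert \ge t$ on $B$ yields
\[
\vert u \vert_K \ge \int_B \vert u \vert \dm \ge t \cdot \mu(B) > 0,
\]
contradicting $\vert u \vert_K = 0$. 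Hence $\vert \cdot \vert_K$ separates points.

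The only step requiring care is the extraction of $B$: a priori $\mu(\Om_{\ge t})$ could be infinite, but this is precisely what \cref{Coratomfree} is designed to handle in the $\sigma$-finite atom-free setting, so the construction goes through without difficulty. Once definiteness is established, combined with the semi-norm properties from \cref{Lem:Cont}, the conclusion that $\vert \cdot \vert_K$ is a norm on $L^1(\Om)$ is immediate. I expect no substantial obstacle beyond correctly citing atom-freeness at the one place where it is used; without it, a nontrivial upper level set could consist of a single atom of measure exceeding $K$, and no admissible truncation would be available.
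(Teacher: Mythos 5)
Your proof is correct and follows essentially the same route as the paper: both reduce the statement to definiteness via \cref{Lem:Cont}, locate a positive-measure set on which $\vert u\vert$ is bounded away from zero (the paper just takes a set where $u\neq 0$ a.e., you take an upper level set $\Om_{\ge t}$), and then use \cref{Coratomfree} to carve out an admissible subset $B$ with $0<\mu(B)\le K$ so that $\vert u\vert_K\ge\int_B\vert u\vert\dm>0$. The level-set refinement only buys you the explicit lower bound $t\,\mu(B)$, which is not needed here.
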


\begin{proof}
By \cref{Lem:Cont} $\vert \cdot \vert_K$ is already a semi-norm.
Let $K \in (0, \mu(\Om))$ and take $u \neq 0$ in $L^1(\Om)$. Then there exists a set $A \in \mathcal{A}$ with $\mu(A) > 0$ and $u(x) \neq 0$ a.e.\@ on $A$.
Due to \cref{Coratomfree} we can choose $B\subset A$ such that $0 < \mu(B) \leq K$.
It follows $\vert u \vert_K \geq \int_B \vert u \vert \dm > 0$.
This shows that $\vert \cdot \vert_K$ is a norm on $L^1(\Om)$.
\end{proof}

We will now use the upper level sets from \cref{lem_omega_t} to show that a set satisfying the sufficient condition of \cref{lem_goal1_suff} exists, and hence
the supremum in the definition of the largest-$K$-norm is always attained.

\begin{theorem} \label{Goal1}
Let $(\Om, \mathcal{A}, \mu)$ be atom-free.
Let $u \in L^1(\Om)$ and $K \in (0, \mu(\Om))$ be given. Then
there are $t\ge0$ and $A\in \mathcal A$ such that $\Om_{>t} \subset A \subset \Om_{\ge t}$, $\mu(A)=K$, and
$\vert u \vert_K = \int_{A} \vert u \vert \d\mu$.
Here, $\Om_{>t}$ and $\Om_{\ge t}$ are defined as in \cref{lem_omega_t}.
\end{theorem}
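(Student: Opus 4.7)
Once we produce $t\ge 0$ and a measurable $A$ with $\Om_{>t}\subset A\subset \Om_{\ge t}$ and $\mu(A)=K$, the identity $\vert u\vert_K=\int_A\vert u\vert\dm$ is immediate from \cref{lem_goal1_suff}. Hence the entire task is to construct the pair $(t,A)$.

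My plan is to split into two cases based on the size of $\supp(u)$. If $\mu(\Om_{>0}) = \mu(\supp u) \le K$, I set $t := 0$. Then $\Om_{>0}$ is contained in any set $A$ we pick above it, so by \cref{Coratomfree} applied to $\Om\setminus\Om_{>0}$ (which has measure $\mu(\Om)-\mu(\Om_{>0})\ge K-\mu(\Om_{>0})$), I pick a subset $B$ of $\Om\setminus\Om_{>0}$ with $\mu(B)=K-\mu(\Om_{>0})$ and set $A:=\Om_{>0}\cup B$. This yields $\Om_{>0}\subset A\subset \Om=\Om_{\ge 0}$ and $\mu(A)=K$.

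The main case is $\mu(\Om_{>0})>K$. I define the threshold
\[
t^* := \inf\{\,t\ge 0 \mid \mu(\Om_{>t})\le K\,\}.
\]
This infimum is finite because $\mu(\Om_{>t})\le t^{-1}\Vert u\Vert_1\to 0$ as $t\to\infty$, and $t^*>0$ since $\mu(\Om_{>0})>K$. By the right-continuity of $t\mapsto\mu(\Om_{>t})$ from \cref{lem_omega_t}, the inequality $\mu(\Om_{>t^*})\le K$ holds. On the other hand, for every $s\in[0,t^*)$ we have $\mu(\Om_{>s})>K$ by definition of the infimum, and the left-continuity of $s\mapsto\mu(\Om_{\ge s})$ from \cref{lem_omega_t} combined with $\Om_{>s}\subset\Om_{\ge s}$ yields
\[
\mu(\Om_{\ge t^*}) \;=\; \lim_{s\nearrow t^*}\mu(\Om_{\ge s}) \;\ge\; \limsup_{s\nearrow t^*}\mu(\Om_{>s}) \;\ge\; K.
\]
Thus $\mu(\Om_{>t^*})\le K\le \mu(\Om_{\ge t^*})$, and the ``gap'' set $\Om_{\ge t^*}\setminus\Om_{>t^*}=\{\vert u\vert = t^*\}$ has measure at least $K-\mu(\Om_{>t^*})\ge 0$. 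Since the space is atom-free and $\sigma$-finite, \cref{Coratomfree} supplies a measurable $B\subset\Om_{\ge t^*}\setminus\Om_{>t^*}$ with $\mu(B)=K-\mu(\Om_{>t^*})$. Setting $A:=\Om_{>t^*}\cup B$ produces $\Om_{>t^*}\subset A\subset \Om_{\ge t^*}$ with $\mu(A)=K$.

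The only delicate point is handling the boundary correctly: I need that $t^*>0$ in the main case (so left-continuity is applicable at $t^*$) and I need $\sigma$-finiteness to guarantee the splitting in \cref{Coratomfree}. Both are available, so the argument goes through, and a final appeal to \cref{lem_goal1_suff} finishes the proof.
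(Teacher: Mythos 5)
Your proposal is correct and follows essentially the same route as the paper: identify a threshold level at which $\mu(\Om_{>t})\le K\le\mu(\Om_{\ge t})$ via the one-sided continuity from \cref{lem_omega_t}, fill up the level set $\{\vert u\vert=t\}$ using \cref{Coratomfree} to reach measure exactly $K$, and conclude with \cref{lem_goal1_suff}. The only (immaterial) differences are that you define the threshold as $\inf\{t\mid\mu(\Om_{>t})\le K\}$ where the paper uses $\sup\{s\mid\mu(\Om_{\ge s})\ge K\}$ — these coincide — and that you separate off the case $\mu(\supp u)\le K$, which the paper's uniform argument absorbs.
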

\begin{proof}
First, let us observe that $\mu( \Omega_{\ge0} ) = \mu(\Omega)$ and $\lim_{t\to\infty} \mu( \Omega_{\ge t} )=0$.
Since $s \mapsto \mu( \Omega_{\ge s} )$ is monotonically decreasing and continuous from the left by \cref{lem_omega_t}, $t := \sup\{ s\ge0 \mid \mu(\Omega_{\ge s})\ge K\}$ is well-defined and it holds
\[
\mu\left(\Omega_{\ge t+\varepsilon}\right) < K \le \mu\left(\Omega_{\ge t}\right) \quad \forall \varepsilon >0.
\]
Since  $\mu\left(\Omega_{>t}\right)= \mu\left(\bigcup_{k=1}^\infty \Omega_{\ge t+\frac1k}\right)=\lim_{n\to\infty} \mu\left(\Omega_{\ge t+\frac1n}\right)$, it follows
\[
\mu\left(\Omega_{>t}\right) \le K \le \mu\left(\Omega_{\ge t}\right) .
\]
Due to \cref{Coratomfree}, we can choose $A \in \mathcal{A}$ such that $\mu(A) = K$ and $\Omega_{>t} \subset A \subset \Omega_{\ge t}$.
Then the claim follows with \cref{lem_goal1_suff}.
\end{proof}

Note that the set $A$ from \cref{Goal1} is not unique as soon as the set $\Om_{=t} := \Om_{\geq t} \setminus \Om_{>t}$ satisfies $\mu(\Om_{=t}) > 0$.
\cref{Goal1} enables us to prove the key result for the reformulation of a $L^0$-constrained problem by the largest-$K$-norm.

\begin{theorem} \label{GenTheorem1}
Let $(\Om, \mathcal{A}, \mu)$ be atom-free.
Let $u \in L^1(\Om)$ and $K \in (0, \mu(\Om))$ be given. Then the following four statements are equivalent:
\begin{enumerate}
\item \label{GenTheorem1_a} $\Vert u \Vert_0 \leq K$,
\item \label{GenTheorem1_d} $\Vert u \Vert_1 = \vert u \vert_K $,
\item \label{GenTheorem1_c} $\vert u \vert_{H}= \vert u \vert_K $ for all $H \in (K, \mu(\Om)]$,
\item \label{GenTheorem1_b} $\vert u \vert_h = \vert u \vert_K $ for one  $h \in (K, \mu(\Om)]$.
\end{enumerate}
\end{theorem}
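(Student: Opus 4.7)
The plan is to prove the cyclic chain \ref{GenTheorem1_a} $\Rightarrow$ \ref{GenTheorem1_c} $\Rightarrow$ \ref{GenTheorem1_d} $\Rightarrow$ \ref{GenTheorem1_b} $\Rightarrow$ \ref{GenTheorem1_a}. Two of these four implications are essentially free, using only the identity $\Vert u\Vert_1 = \vert u \vert_{\mu(\Om)}$ and the monotonicity of $H\mapsto \vert u \vert_H$: specializing to $H = \mu(\Om)$ turns \ref{GenTheorem1_c} into \ref{GenTheorem1_d}, and specializing to $h = \mu(\Om)$ turns \ref{GenTheorem1_d} into \ref{GenTheorem1_b}. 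The real work lies in the remaining two implications.

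For \ref{GenTheorem1_a} $\Rightarrow$ \ref{GenTheorem1_c}, if $\mu(\supp u) \le K$ then $\supp u$ itself is admissible in the supremum defining $\vert u \vert_K$, so
\[
\vert u \vert_K \ge \int_{\supp u} \vert u \vert \dm = \Vert u\Vert_1.
\]
Combined with the trivial upper bound $\vert u\vert_H \le \Vert u\Vert_1$ and the monotonicity $\vert u \vert_K \le \vert u \vert_H$ for $H \ge K$, this sandwiches all three quantities into equality for every $H \in (K, \mu(\Om)]$.

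The nontrivial implication is \ref{GenTheorem1_b} $\Rightarrow$ \ref{GenTheorem1_a}, which I would prove by contradiction using \cref{Goal1}. Invoking that theorem, pick $t \ge 0$ and $A_K \in \mathcal{A}$ with $\Om_{>t} \subset A_K \subset \Om_{\ge t}$, $\mu(A_K) = K$, and $\vert u \vert_K = \int_{A_K} \vert u \vert \dm$. If $t = 0$, then $\Om_{>0} \subset A_K$ immediately yields $\Vert u\Vert_0 = \mu(\Om_{>0}) \le \mu(A_K) = K$, which is \ref{GenTheorem1_a}. If $t > 0$, then $A_K \subset \Om_{\ge t} \subset \supp u$; assume for contradiction that $\mu(\supp u) > K = \mu(A_K)$, so $\mu(\supp u \setminus A_K) > 0$. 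By \cref{Coratomfree} there is a measurable set $C \subset \supp u \setminus A_K$ whose measure is positive and at most $h-K$ (taking any positive finite measure if $h = \mu(\Om)$ is infinite). Since $\vert u\vert > 0$ on $\supp u$, this gives
\[
\vert u \vert_h \ge \int_{A_K \cup C} \vert u \vert \dm = \vert u \vert_K + \int_C \vert u \vert \dm > \vert u \vert_K,
\]
contradicting \ref{GenTheorem1_b}.

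The main obstacle is the case distinction on $t$ in this last implication: the reformulation stands or falls with the existence of the ``threshold'' set from \cref{Goal1}, and only the atom-free hypothesis, via \cref{Coratomfree}, lets us fine-tune the measure of $C$ to obtain the strict inequality that produces the contradiction.
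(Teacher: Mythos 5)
Your proposal is correct and takes essentially the same route as the paper: the easy implications follow from monotonicity of $H \mapsto \vert u \vert_H$ together with $\Vert u \Vert_1 = \vert u \vert_{\mu(\Om)}$, and the key implication (\ref{GenTheorem1_b}) $\Rightarrow$ (\ref{GenTheorem1_a}) uses the maximizing set from \cref{Goal1} and atom-freeness (via \cref{Coratomfree}) to enlarge that set inside $\supp u$ by a small positive amount, contradicting $\vert u \vert_h = \vert u \vert_K$. The only cosmetic difference is your case split on $t$, which the paper avoids by first replacing the maximizing set $A$ with $A \cap \supp u$.
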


\begin{proof}
(\ref{GenTheorem1_a}) $\Rightarrow$ (\ref{GenTheorem1_d}):
Let $A:=\supp(u)$. Then $\mu(A)\le K$ and $|u|_K \ge \int_A |u|\d\mu  =\|u\|_1 \ge |u|_K $.
(\ref{GenTheorem1_d}) $\Rightarrow$ (\ref{GenTheorem1_c}): This follows from the monotonicity of $H \mapsto \vert u \vert_{H}$ and $\|u\|_1 = |u|_{\mu(\Om)}$.
(\ref{GenTheorem1_c}) $\Rightarrow$ (\ref{GenTheorem1_b}) is trivial.

(\ref{GenTheorem1_b}) $\Rightarrow$ (\ref{GenTheorem1_a}):
Let $A$ be given by \cref{Goal1}, i.e.,
$\mu(A) \le K$ and $\int_A |u|\d\mu = |u|_K$.
By replacing $A$ with $A\cap \supp u$, we can assume $A \subset\supp u$.
Let $B$ be given such that $A \subset B \subset \supp u$ and $\mu(B) \le h$.
Then $|u|_h \ge \int_B |u|\d\mu \ge \int_A |u|\d\mu  = |u|_K = |u|_h$,
which implies $u=0$ a.e.\@ on $B\setminus A$.
Due to the inclusion $B \subset \supp u$ it follows $\mu(B\setminus A)=0$ and $\mu(B)=\mu(A)\le K$.
Since $\mu$ is atom-free,
all subsets of $\supp u$ have measure $\le K$, and $\|u\|_0 \le K$ follows.
\end{proof}

\begin{remark}\label{Remark:Reform}
In \cref{GenTheorem1} only the implication (\ref{GenTheorem1_b}) $\Rightarrow$ (\ref{GenTheorem1_a}) needed that the measure space is atom-free.
If there are atoms then this implication is not true: take $\Om=\{0\}$ with $\mu(\Om)=1$.
Let $u$ be given with $u(0)=1$. Then $\|u\|_0=1$, $|u|_h=0$ for all $h<1$, so that \cref{GenTheorem1} (\ref{GenTheorem1_b}) is satisfied for $K=\frac12$ but (\ref{GenTheorem1_a}) is not.
\end{remark}

Let us now investigate the subdifferential of $u \mapsto |u|_K$.
The first step is to prove that the supremum in the definition of $|\cdot|_K$ can be written as the
supremum of a linear functional on a convex set.
We start with  the following auxiliary result.

\begin{lemma}\label{LemmaExtD}
Let $(\Om, \mathcal{A}, \mu)$ be atom-free.
Let $K \in (0, \mu(\Om))$ be given and define
\[
D := \left\lbrace d \in L^{\infty}(\Om) \mid d \geq 0,\, \Vert d \Vert_{\infty} \leq 1,\, \Vert d \Vert_1 \leq K \right\rbrace.
\]
Then $D$ is convex and it holds
\[
\ext(D) = \left\lbrace \chi_A \in L^{\infty}(\Om) \mid A\in \mathcal{A},\,  \mu(A) \leq K \right\rbrace,
\]
where $\ext(D)$ denotes the set of extreme points of $D$.
\end{lemma}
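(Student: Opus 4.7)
The plan is to treat convexity as an immediate observation and then prove the two inclusions of the extreme-point characterization separately. Convexity of $D$ follows at once from writing $D$ as the intersection of three convex subsets of $L^\infty(\Om)$: $\{d\ge 0\}$, the closed unit ball, and the sublevel set $\{\|d\|_1\le K\}$. No need to dwell on this.

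For the inclusion $\{\chi_A:\mu(A)\le K\}\subset \ext(D)$, I would pick $A\in\mathcal A$ with $\mu(A)\le K$ and suppose $\chi_A=\lambda d_1+(1-\lambda)d_2$ for some $\lambda\in(0,1)$ and $d_1,d_2\in D$. Because $0\le d_i\le 1$ a.e., on $\Om\setminus A$ the convex combination equals $0$ so both $d_i$ must vanish there a.e., and on $A$ the combination equals $1$, which together with $d_i\le 1$ forces $d_i=1$ a.e.\ on $A$. Hence $d_1=d_2=\chi_A$ and $\chi_A$ is extreme; the $L^1$-constraint plays no role here beyond ensuring $\chi_A\in D$.

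The substantive direction is $\ext(D)\subset\{\chi_A:\mu(A)\le K\}$, which I would prove by contraposition: assume $d\in D$ is \emph{not} (a.e.\ equal to) the indicator of a measurable set, and construct a nontrivial convex decomposition of $d$ inside $D$. Since $d$ is not $\{0,1\}$-valued, the set $E:=\{x\in\Om\mid 0<d(x)<1\}$ has $\mu(E)>0$, and writing $E=\bigcup_n E_n$ with $E_n:=\{1/n\le d\le 1-1/n\}$, some $E_n$ has positive (and, by restricting to a subset using \cref{Coratomfree}, finite) measure. Invoking atom-freeness through \cref{Coratomfree}, I split $E_n$ into two disjoint measurable pieces $E_n^1, E_n^2$ of equal positive measure and set $\varepsilon:=\alpha(\chi_{E_n^1}-\chi_{E_n^2})$ for a small $\alpha\in(0,1/n]$. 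Then $0\le d\pm\varepsilon\le 1$ pointwise a.e., and since $\int\varepsilon\dm=0$ and $d\pm\varepsilon\ge 0$, one has $\|d\pm\varepsilon\|_1=\|d\|_1\le K$; thus $d\pm\varepsilon\in D$. As $\varepsilon\not\equiv 0$, writing $d=\tfrac12(d+\varepsilon)+\tfrac12(d-\varepsilon)$ contradicts extremality. Finally, any extreme $d=\chi_A$ must satisfy $\mu(A)=\|d\|_1\le K$, closing the inclusion.

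The only delicate step is the construction of $\varepsilon$: I need simultaneously $d\pm\varepsilon\in[0,1]$ and the $L^1$-bound; the first is handled by restricting to $E_n$ and choosing $\alpha\le 1/n$, while the second is forced by the zero-integral design of $\varepsilon$, so that atom-freeness is used exactly once, precisely to halve $E_n$. I expect no subtler obstacle than this balancing of the three defining constraints of $D$.
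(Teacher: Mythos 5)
Your proposal is correct and follows essentially the same route as the paper: the extreme-point direction for $\chi_A$ uses the pointwise extremality of $\{0,1\}$ in $[0,1]$, and the converse uses atom-freeness to halve a set on which $d$ is bounded away from $0$ and $1$ and then perturbs by a zero-integral signed indicator difference. The only cosmetic difference is that you locate such a set via the level sets $\{1/n\le d\le 1-1/n\}$ rather than asserting its existence directly; the argument is the same.
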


\begin{proof}
Obviously, $D$ is convex.
Let $A \in \mathcal{A}$ with $\mu(A) \leq K$, which implies $\chi_A \in D$.
Let $d_1, d_2 \in D$ and $\lambda \in (0,1)$ be given such that $\lambda d_1 + (1-\lambda)d_2 = \chi_A$.
Then for almost all $x \in \Om$ we have $d_1(x),d_2(x) \in [0,1]$ and $\chi_A(x) \in \ext( [0,1] ) $.
It follows $\chi_A = d_1 = d_2$ almost everywhere, and hence $\chi_A$ is an extreme point of $D$.

Now let $d \in D \setminus  \{ \chi_A \mid A\in \mathcal{A},\,  \mu(A) \leq K \}$.
Then $d$ cannot be a characteristic function.
Therefore, there exist a set $A \in \mathcal{A}$ with $0 < \mu(A) < \infty$ and $\varepsilon > 0$ such that $\varepsilon < d < 1-\varepsilon$ a.e.\@ on $A$.
Since $(\Om, \mathcal{A}, \mu)$ is atom-free there exists a set $B \subseteq A$ with $\mu(B) = \frac{1}{2}\mu(A)$.
We define
\[
d_1 := d + \varepsilon (\chi_B - \chi_{A\setminus B}), \quad d_2 := d + \varepsilon (\chi_{A\setminus B} - \chi_B).
\]
Then it holds $d_1, d_2 \in D$, $\frac{1}{2} d_1 + \frac{1}{2} d_2 = d$, and $d_1 \neq d \neq d_2$, which proves $d \notin \ext(D)$.
\end{proof}

The supremum in \cref{DefkNorm} of the largest-$K$-norm can be taken over the whole set $D$ instead of just the characteristic functions without changing the result.

\begin{theorem} \label{TheoremAltKRepr}
Let $(\Om, \mathcal{A}, \mu)$ be atom-free.
Let $u \in L^1(\Om)$ and $K \in (0, \mu(\Om))$ be given. Let $D$ be as in \cref{LemmaExtD}. Then it holds
\[
\sup_{A \in \mathcal{A}: \mu(A) \leq K} \int_A \vert u \vert \dm = \vert u \vert_K = \sup_{d \in D} \int_{\Om} d \vert u \vert \dm.
\]
Both suprema are attained.
Moreover, if $\Vert u \Vert_0 \geq K$ and $d \in D$ attains the supremum then it holds $\Vert d \Vert_1 = K$.
\end{theorem}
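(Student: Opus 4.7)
The first equality is just Definition \ref{DefkNorm}, and Theorem \ref{Goal1} supplies both a threshold $t\ge 0$ and a set $A\in\mathcal{A}$ with $\Om_{>t}\subset A\subset \Om_{\ge t}$, $\mu(A)=K$, and $|u|_K = \int_A |u|\dm$. This immediately delivers attainment of the left-hand supremum and, since $\chi_A\in D$, the inequality $|u|_K \le \sup_{d\in D}\int_\Om d|u|\dm$. It remains to prove the reverse inequality for arbitrary $d\in D$ and to read off the moreover part from the sharp cases.

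The main step will be the estimate
\[
\int_\Om d|u|\dm - \int_A |u|\dm \;=\; \int_{\Om\setminus A} d|u|\dm - \int_A (1-d)|u|\dm \;\le\; t\bigl(\|d\|_1 - K\bigr)
\]
for every $d\in D$. The threshold inclusion forces $|u|\le t$ on $\Om\setminus A$ and $|u|\ge t$ on $A$, which combined with $0\le d\le 1$ yields the layered bounds $\int_{\Om\setminus A} d|u|\dm \le t\int_{\Om\setminus A} d\dm$ and $\int_A (1-d)|u|\dm \ge t\int_A (1-d)\dm$; adding these and using $\mu(A)=K$ produces the displayed inequality. Since $\|d\|_1\le K$, the right-hand side is nonpositive, giving $\sup_{d\in D}\int_\Om d|u|\dm \le |u|_K$ and attainment at $d=\chi_A$. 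I expect assembling this telescoping estimate to be the main technical step; once the two layered bounds are in place the rest is algebra.

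For the moreover part, suppose $\|u\|_0\ge K$ and $d\in D$ attains the supremum. Tracing equality through the estimate above forces $t(\|d\|_1-K)=0$, so either $t>0$ (in which case $\|d\|_1=K$ immediately) or $t=0$. The delicate case is $t=0$: by construction of $t$ in the proof of Theorem \ref{Goal1}, this gives $\mu(\Om_{>0})\le K$, hence together with the hypothesis $\|u\|_0\ge K$ we obtain $\|u\|_0=K$, and Theorem \ref{GenTheorem1} then yields $|u|_K=\|u\|_1$. The attainment $\int_\Om d|u|\dm=\|u\|_1$ combined with $0\le d\le 1$ and $|u|\ge 0$ forces $d=1$ a.e.\@ on $\supp u$, so $\|d\|_1\ge \mu(\supp u)=K$; the upper bound $\|d\|_1\le K$ then closes the argument.
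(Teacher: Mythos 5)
Your proof is correct and follows essentially the same route as the paper: the same key estimate $\int_\Om d|u|\dm - |u|_K \le t(\|d\|_1-K)\le 0$ built from the threshold set $A$ of \cref{Goal1}, merely organized as a split over $A$ versus $\Om\setminus A$ instead of the paper's three level sets $\Om_{>t},\Om_{=t},\Om_{<t}$. The moreover part also matches; your $t=0$ case detours through $\|u\|_0=K$ and \cref{GenTheorem1} where the paper reads $d=1$ a.e.\@ on $\Om_{>0}$ directly off the equality conditions, but both arguments land on $\|d\|_1\ge\mu(\supp u)\ge K$.
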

\begin{proof}
Let $t$ and $A$ be as in \cref{Goal1},
i.e., $\Om_{>t} \subset A \subset \Om_{\ge t}$ with $\mu(A)=K$ and $|u|_K=\int_A |u|\d\mu$.
Define the sets $\Om_{<t}$ and $\Om_{=t}$ analogously to \cref{lem_omega_t}.

Let $d\in D$ be given. Using the properties of $d$ and $A$, we find
\begin{multline} \label{eq_sup_D}
\int_\Om d |u| \dm - |u|_K = \int_\Om (d - \chi_A)|u| \dm \\
\begin{aligned}
&=
\int_{ \Om_{>t} } (d-1)|u| \dm + \int_{ \Om_{=t} } (d-\chi_A)|u| \dm + \int_{ \Om_{<t} } (d-0) |u| \dm \\
&\le
t \left( \int_{ \Om_{>t} } (d-1) \dm + \int_{ \Om_{=t} } (d-\chi_A)  \dm + \int_{ \Om_{<t} } d \dm  \right)\\
&=
t ( \|d\|_1  - \mu(A) ) \le0.
\end{aligned}\end{multline}
This proves
\[
 \vert u \vert_K \ge \sup_{d \in D} \int_{\Om} d \vert u \vert \dm.
\]
Since for $A \in \mathcal{A}$ with $\mu(A) \leq K$ we have $\chi_A\in D$, the reverse inequality follows.

Let $\Vert u \Vert_0 \geq K$.
Let the supremum be attained in $d\in D$.
The inequalities in \eqref{eq_sup_D} are satisfied with equality if and only if
$d=1$ a.e.\@ on $\Om_{>t}$, $d=0$ a.e.\@ on $\Om_{<t}$, and $t ( \|d\|_1  - K ) =0$.
If $t>0$ then we get $ \|d\|_1  = K$ immediately.
If $t=0$ then these conditions imply $\|d\|_1 \ge \mu( \Om_{>0}) = \Vert u \Vert_0 \geq K$.
Since $d\in D$, it follows $ \|d\|_1  = K$.
\end{proof}

The proof implies that all sets which attain the supremum in the largest-$K$-norm satisfy, up to null sets, the sufficient condition from \cref{lem_goal1_suff}.
\begin{corollary}
Let $(\Om, \mathcal{A}, \mu)$ be atom-free.
Let $u \in L^1(\Om)$, $K \in (0, \mu(\Om))$ and $t \geq 0$ from \cref{Goal1}.
If $\Vert u \Vert_0 \geq K$ and $B$ is a set with $\mu(B) \leq K$ and $\int_B \vert u \vert \dm = \vert u \vert_K$ then $\mu(B) = K$ and $\mu(\Om_{>t} \setminus B) = 0 = \mu(B \setminus \Om_{\geq t})$.
\end{corollary}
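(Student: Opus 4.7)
The plan is to apply \cref{TheoremAltKRepr} to the characteristic function $d = \chi_B$ and read off the equality cases from the chain of inequalities in the proof of that theorem.

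First, I would observe that $\chi_B \in D$, since $\chi_B$ is $\{0,1\}$-valued (hence nonnegative with $\|\chi_B\|_\infty \leq 1$) and $\|\chi_B\|_1 = \mu(B) \leq K$. The hypothesis $\int_B |u| \dm = |u|_K$ rewrites as $\int_\Om \chi_B |u| \dm = |u|_K$, so $\chi_B$ attains the supremum $\sup_{d \in D} \int_\Om d|u| \dm$. Invoking the last assertion of \cref{TheoremAltKRepr} with the assumption $\|u\|_0 \ge K$ yields $\|\chi_B\|_1 = K$, i.e., $\mu(B) = K$.

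Next, I would extract the set-theoretic information by inspecting when the inequalities \eqref{eq_sup_D} become equalities for $d = \chi_B$. Writing out the three terms, the integrand $(\chi_B - 1)|u|$ on $\Om_{>t}$, $(\chi_B - \chi_A)|u|$ on $\Om_{=t}$, and $\chi_B |u|$ on $\Om_{<t}$ is estimated termwise against $t$ times the corresponding signed indicator. Equality in the first integral forces $\chi_B = 1$ a.e.\@ on $\Om_{>t}$ (since $|u| > t$ there and $\chi_B - 1 \le 0$), giving $\mu(\Om_{>t} \setminus B) = 0$. Equality in the third integral forces $\chi_B = 0$ a.e.\@ on $\Om_{<t}$ (since $|u| < t$ there and $\chi_B \ge 0$), giving $\mu(B \cap \Om_{<t}) = 0$. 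Because $\Om \setminus \Om_{\ge t} = \Om_{<t}$, this is exactly $\mu(B \setminus \Om_{\ge t}) = 0$.

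There is a small case distinction hidden in the argument: if $t = 0$ then the termwise bounds above become $\ge 0$ trivially, and one cannot directly conclude $\chi_B = 0$ on $\Om_{<t}$ from them. However, when $t = 0$ the set $\Om_{<t}$ is empty, so the claim $\mu(B \setminus \Om_{\ge t}) = 0$ is automatic; similarly, the argument on $\Om_{>t}$ still delivers $\mu(\Om_{>0} \setminus B) = 0$ because $|u| > 0$ on $\Om_{>0}$ and equality in the first integral still requires $\chi_B = 1$ almost everywhere on that set. So the main (and only) obstacle is simply bookkeeping the equality conditions in the estimate from the proof of \cref{TheoremAltKRepr}; once that is done, the three conclusions $\mu(B) = K$, $\mu(\Om_{>t} \setminus B) = 0$, and $\mu(B \setminus \Om_{\ge t}) = 0$ follow immediately.
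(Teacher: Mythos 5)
Your proposal is correct and follows essentially the same route as the paper: the paper's proof likewise invokes the last assertion of \cref{TheoremAltKRepr} for $d=\chi_B$ to get $\mu(B)=K$ and then reads off the equality cases of \eqref{eq_sup_D} to obtain $\chi_B=1$ a.e.\@ on $\Om_{>t}$ and $\chi_B=0$ a.e.\@ on $\Om_{<t}$. Your explicit handling of the $t=0$ case is a welcome extra detail (the paper leaves it implicit), and it is argued correctly since $\Om_{<0}=\emptyset$ and $\Om_{\ge 0}=\Om$.
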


\begin{proof}
\cref{TheoremAltKRepr} directly implies $\mu(B) = K$ and the corresponding proof shows $\chi_B=1$ a.e.\@ on $\Om_{>t}$ and $\chi_B=0$ a.e.\@ on $\Om_{<t}$ which is the claim.
\end{proof}

Moreover, the characterization by \cref{TheoremAltKRepr} has the following two consequences.
First, in the case of a finite measure space, we have that the largest-$K$-norm is an equivalent norm on $L^1(\Om)$.

\begin{corollary}\label{CorNormEquiv}
Let  $(\Om, \mathcal{A}, \mu)$ be finite and atom-free.
Let $K \in (0, \mu(\Om))$. Then it holds
\[
\frac{K}{\mu(\Om)} \Vert u \Vert_1 \leq \vert u \vert_K \leq \Vert u \Vert_1 \quad \forall u \in L^1(\Om),
\]
i.e., $\vert \cdot \vert_K$ and $\Vert \cdot \Vert_1$ are equivalent norms for $L^1(\Om)$.

\end{corollary}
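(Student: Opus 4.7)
The plan is to exploit the alternative representation of the largest-$K$-norm established in \cref{TheoremAltKRepr}, namely
\[
\vert u \vert_K = \sup_{d \in D} \int_{\Om} d \vert u \vert \dm,
\]
where $D = \{d \in L^{\infty}(\Om) \mid d \geq 0,\, \Vert d \Vert_{\infty} \leq 1,\, \Vert d \Vert_1 \leq K\}$. The key observation is that $D$ contains functions that are not characteristic functions; in particular, for a finite measure space it contains an explicit constant function which immediately yields the lower bound.

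The upper inequality $\vert u \vert_K \leq \Vert u \Vert_1$ is immediate: any $d \in D$ satisfies $0 \leq d \leq 1$ a.e., so $\int_\Om d\vert u \vert\dm \leq \Vert u \Vert_1$. (Alternatively, it follows from the monotonicity of $H \mapsto \vert u \vert_H$ together with $\vert u \vert_{\mu(\Om)} = \Vert u \Vert_1$.)

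For the lower inequality, since $\mu(\Om) < \infty$ and $K \in (0, \mu(\Om))$, the constant function $d \equiv \frac{K}{\mu(\Om)}$ is well-defined and belongs to $L^{\infty}(\Om)$. We have $\Vert d \Vert_{\infty} = \frac{K}{\mu(\Om)} \leq 1$ and $\Vert d \Vert_1 = \frac{K}{\mu(\Om)} \mu(\Om) = K$, so $d \in D$. Applying \cref{TheoremAltKRepr} then gives
\[
\vert u \vert_K \geq \int_{\Om} \frac{K}{\mu(\Om)} \vert u \vert \dm = \frac{K}{\mu(\Om)} \Vert u \Vert_1,
\]
as desired.

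There is no real obstacle in this proof; the entire work has already been done in \cref{TheoremAltKRepr}. The only point worth noting is that the finiteness of $\mu(\Om)$ is essential to ensure that the constant function $K/\mu(\Om)$ lies in $L^1(\Om) \cap L^\infty(\Om)$, so the argument would fail on an infinite measure space (and indeed the claimed equivalence of norms cannot hold there, since $\vert \cdot \vert_K$ is bounded on bounded $L^1$-sets with possibly small support while $\Vert \cdot \Vert_1$ can be arbitrarily large compared to it in a nonuniform way).
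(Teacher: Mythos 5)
Your proof is correct and follows essentially the same route as the paper: the paper also verifies the lower bound by noting that $d := \frac{K}{\mu(\Om)}\chi_\Om$ lies in $D$ and applying \cref{TheoremAltKRepr}. The only (trivial) omission is an explicit appeal to \cref{Theo:Norm} for the fact that $\vert\cdot\vert_K$ is itself a norm, which is needed to phrase the conclusion as an equivalence of norms.
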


\begin{proof}
By \cref{Theo:Norm} $\vert \cdot \vert_K$ is a norm on $L^1(\Om)$.
Define $d := \frac{K}{\mu(\Om)} \chi_{\Om} \in D$. We then have for every $u \in L^1(\Om)$ that
\[
\frac{K}{\mu(\Om)} \Vert u \Vert_1 = \int_{\Om} d \vert u \vert \dm \leq \sup_{d \in D} \int_{\Om} d \vert u \vert \dm = \vert u \vert_K.
\]
\end{proof}

Note that setting $u = \chi_{\Om}$ and $u = \chi_A$ for some $A \in \mathcal{A}$ with $\mu(A) = K$ shows that these two inequalities are sharp.
A second consequence of \cref{TheoremAltKRepr}
is the following generalization of the H\"older inequality.

\begin{corollary}\label{cor:Integral}
Let $(\Om, \mathcal{A}, \mu)$ be atom-free.
Let $u\in L^1(\Omega)$, $v\in L^1(\Omega) \cap L^\infty(\Omega)$ and $K \in (0, \mu(\Om))$. Then it holds
\[
\int_\Omega u \cdot v \dm \le |u|_K \cdot \max\left( \|v\|_\infty, K^{-1} \|v\|_1\right).
\]
\end{corollary}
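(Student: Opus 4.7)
The plan is to apply the dual representation of the largest-$K$-norm established in \cref{TheoremAltKRepr}, namely $\vert u\vert_K = \sup_{d\in D}\int_\Om d\vert u\vert\dm$ with $D=\{d\in L^{\infty}(\Om): 0\le d\le 1,\ \Vert d\Vert_1\le K\}$. The key observation is that it suffices to exhibit a single function $d\in D$ such that $\vert v\vert \le M d$ pointwise almost everywhere, where $M := \max(\Vert v\Vert_\infty, K^{-1}\Vert v\Vert_1)$, since then
\[
\int_\Om u\cdot v \dm \le \int_\Om \vert u\vert\,\vert v\vert\dm \le M\int_\Om \vert u\vert d\dm \le M\,\vert u\vert_K.
\]

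The natural candidate, and in fact the only sensible one given the definition of $M$, is $d := \vert v\vert / M$. First I would dispose of the trivial case $M=0$, which forces $v=0$ almost everywhere and makes both sides of the inequality vanish. For $M>0$, the verification $d\in D$ is a short computation: nonnegativity is clear, the bound $\Vert d\Vert_\infty\le 1$ follows from $M\ge\Vert v\Vert_\infty$, and $\Vert d\Vert_1\le K$ follows from $M\ge K^{-1}\Vert v\Vert_1$, i.e., $\Vert v\Vert_1\le KM$.

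Once membership $d\in D$ is established, \cref{TheoremAltKRepr} gives
\[
\vert u\vert_K \ge \int_\Om d\,\vert u\vert \dm = M^{-1}\int_\Om \vert u\vert\,\vert v\vert\dm,
\]
and rearranging together with $\int_\Om u\cdot v \dm\le\int_\Om\vert u\vert\,\vert v\vert\dm$ yields the claim. I do not anticipate any genuine obstacle here: the only point worth noting is that the two normalizations inside the maximum defining $M$ correspond exactly to the two constraints defining $D$ (the $L^\infty$-bound and the $L^1$-bound), which is what makes the choice $d=\vert v\vert/M$ feasible and thereby explains the appearance of the maximum in the statement.
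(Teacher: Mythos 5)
Your proposal is correct and follows exactly the paper's own argument: the paper likewise takes $d = \vert v\vert / \max\left(\Vert v\Vert_\infty, K^{-1}\Vert v\Vert_1\right)$, checks $d\in D$, and invokes \cref{TheoremAltKRepr}. Your write-up merely spells out the membership verification and the trivial case $v=0$ that the paper leaves implicit.
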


\begin{proof}
This follows directly from \cref{TheoremAltKRepr}, since for $v\ne0$ we have
\[
\left( \max \left( \|v\|_\infty, K^{-1} \|v\|_1) \right) \right)^{-1} \vert v \vert \in D.
\]
\end{proof}

Using \cref{TheoremAltKRepr}, we can fully characterize the convex subdifferential of the largest-$K$-norm.
We need the following well-known fact, see, e.g.,  \cite[Proposition 16.18]{BauschkeCombettes2011} for a similar result.
We present its short proof for the convenience of the reader.

\begin{lemma}\label{lem:Subdiff}
Let $X$ be a normed space and $f: X \to \R$ be convex and positively homogeneous, i.e.,
$f(\lambda x) = \lambda f(x)$ for all $x\in X$ and all $\lambda\ge0$.
Then for every $x \in X$ it holds
\[
\partial f(x) = \left\lbrace s \in \partial f(0) \mid \langle s, x \rangle = f(x) \right\rbrace.
\]
\end{lemma}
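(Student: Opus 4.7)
The plan is to prove the two inclusions directly from the definition of the subdifferential, using positive homogeneity both to extract the identity $f(0)=0$ and to obtain the right ``test directions'' when testing the subgradient inequality.

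First I would record the preliminary fact that positive homogeneity forces $f(0)=0$: taking $\lambda=2$ in $f(\lambda\cdot 0)=\lambda f(0)$ yields $f(0)=2f(0)$, hence $f(0)=0$. This is what allows us to translate inequalities at $x$ into inequalities at $0$ and vice versa.

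For the inclusion $\subseteq$, let $s\in\partial f(x)$. I would test the subgradient inequality $f(y)-f(x)\ge\langle s,y-x\rangle$ with two specific choices of $y$. Taking $y=0$ gives, together with $f(0)=0$, the bound $\langle s,x\rangle\ge f(x)$. Taking $y=2x$ and using $f(2x)=2f(x)$ gives $f(x)\ge\langle s,x\rangle$. Combining these yields $\langle s,x\rangle=f(x)$. To check that $s\in\partial f(0)$, I would rewrite the subgradient inequality as $f(y)\ge f(x)+\langle s,y\rangle-\langle s,x\rangle=\langle s,y\rangle$, which is exactly $f(y)-f(0)\ge\langle s,y-0\rangle$ for all $y\in X$.

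For the inclusion $\supseteq$, let $s\in\partial f(0)$ with $\langle s,x\rangle=f(x)$. Then $f(y)\ge\langle s,y\rangle$ for every $y\in X$, and subtracting $f(x)=\langle s,x\rangle$ on both sides gives $f(y)-f(x)\ge\langle s,y-x\rangle$, i.e.\ $s\in\partial f(x)$. There is no substantial obstacle in this argument; the only thing to watch out for is not to confuse $f(0)=0$ (which needs positive homogeneity) with the a priori weaker property of $f$ being positively homogeneous for $\lambda>0$ only. The whole proof is short and essentially notational.
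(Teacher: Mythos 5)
Your proof is correct and follows essentially the same route as the paper's: establish $f(0)=0$, test the subgradient inequality at $y=0$ and $y=2x$ to get $\langle s,x\rangle=f(x)$ for the forward inclusion, and subtract $f(x)=\langle s,x\rangle$ for the reverse. No gaps.
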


\begin{proof}
First, note that $f$ satisfies $f(0)=0$.
Let $s \in \partial f(0)$ with $\langle s, x \rangle = f(x)$, then
\[
f(y) - f(x) \geq \langle s, y \rangle - \langle s, x \rangle = \langle s, y - x \rangle
\]
for every $y \in X$, i.e., $s \in \partial f(x)$.
Now let $s \in \partial f(x)$, i.e.,
\[
f(y) - f(x) \geq \langle s, y - x \rangle \quad \forall y \in X.
\]
Setting $y = 0$ and $y = 2x$ implies $f(x) \leq \langle s, x \rangle$ and $f(x) \geq \langle s, x \rangle$, respectively, which shows $\langle s, x \rangle = f(x)$.
Inserting this in the above inequality yields
\[
f(y) \geq \langle s, y \rangle \quad \forall y \in X,
\]
i.e., $s \in \partial f(0)$.
\end{proof}

\begin{theorem}\label{theo:Subdiff}
Let $(\Om, \mathcal{A}, \mu)$ be atom-free.
Let $0 \neq u \in L^1(\Om)$ and $K \in (0, \mu(\Om))$. Then it holds
\begin{align*}
\partial \vert \cdot \vert_K(0) &= \left\lbrace s \in L^{\infty}(\Om) \mid \Vert s \Vert_{\infty} \leq 1,\, \Vert s \Vert_1 \leq K \right\rbrace, \\
\partial \vert \cdot \vert_K(u) &= \left\lbrace s \in L^{\infty}(\Om) \mid \max\left( \Vert s \Vert_{\infty},\, K^{-1}\Vert s \Vert_1 \right) = 1,\, \int_{\Om} s u \dm = \vert u \vert_K \right\rbrace.
\end{align*}
\end{theorem}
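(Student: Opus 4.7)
My plan is to compute $\partial|\cdot|_K(0)$ directly from the dual representation in \cref{TheoremAltKRepr} and then transfer this to $\partial|\cdot|_K(u)$ using \cref{lem:Subdiff}, whose hypotheses are satisfied because $|\cdot|_K$ is a continuous semi-norm (hence convex, continuous, positively homogeneous) by \cref{Lem:Cont}.

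For the inclusion ``$\supseteq$'' at $0$, take any $s\in L^\infty(\Om)$ with $\|s\|_\infty\le 1$ and $\|s\|_1\le K$. Then $|s|\in D$, where $D$ is the set from \cref{LemmaExtD}, and \cref{TheoremAltKRepr} gives $\int_\Om sv\dm\le\int_\Om|s||v|\dm\le|v|_K$ for every $v\in L^1(\Om)$, so $s\in\partial|\cdot|_K(0)$. For the reverse inclusion, let $s\in\partial|\cdot|_K(0)$. To obtain $\|s\|_\infty\le 1$, I would argue by contradiction: if $\mu(\{s>1+\varepsilon\})>0$ for some $\varepsilon>0$, pick via \cref{Coratomfree} a subset $A$ of $\{s>1+\varepsilon\}$ with $0<\mu(A)\le K$; testing against $v=\chi_A$ yields $\int_A s\dm>\mu(A)=|\chi_A|_K$, contradicting $s\in\partial|\cdot|_K(0)$. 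The same argument with $\{s<-1-\varepsilon\}$ and $v=-\chi_A$ handles the other sign. To obtain $\|s\|_1\le K$ in the $\sigma$-finite setting, take the exhaustion $B_N=\bigcup_{n=1}^N\Om_n$ with $\mu(B_N)<\infty$, set $v_N:=\sign(s)\,\chi_{B_N}\in L^1(\Om)\cap L^\infty(\Om)$, and note that $|v_N|_K\le K$ since $|v_N|\le 1$. The subgradient inequality gives
\[
\int_{B_N}|s|\dm = \int_\Om s v_N\dm \le |v_N|_K \le K,
\]
and monotone convergence as $N\to\infty$ yields $\|s\|_1\le K$.

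For the subdifferential at $u\ne 0$, \cref{lem:Subdiff} gives $\partial|\cdot|_K(u)=\{s\in\partial|\cdot|_K(0)\mid\langle s,u\rangle=|u|_K\}$. Every such $s$ satisfies $\max(\|s\|_\infty,K^{-1}\|s\|_1)\le 1$. To upgrade this to equality, note that $u\ne 0$ and \cref{Theo:Norm} yield $|u|_K>0$; if the maximum were strictly less than $1$, then $s\in L^1(\Om)\cap L^\infty(\Om)$ and \cref{cor:Integral} (with the roles of $u$ and $v$ swapped) would give
\[
|u|_K = \int_\Om s u\dm \le |u|_K\cdot\max(\|s\|_\infty, K^{-1}\|s\|_1) < |u|_K,
\]
a contradiction. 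Conversely, if $\max(\|s\|_\infty,K^{-1}\|s\|_1)=1$ and $\int_\Om s u\dm=|u|_K$, then in particular $\|s\|_\infty\le 1$ and $\|s\|_1\le K$, so $s\in\partial|\cdot|_K(0)$ by the first part, and \cref{lem:Subdiff} places $s$ in $\partial|\cdot|_K(u)$.

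The main technical point I expect is the proof of $\|s\|_1\le K$ in the case $\mu(\Om)=\infty$: a single test function cannot realize the $L^1$-norm of $s$, so $\sigma$-finiteness must be used through an exhaustion plus monotone convergence. Everything else reduces to direct application of the duality in \cref{TheoremAltKRepr}, the generalized H\"older inequality in \cref{cor:Integral}, and the positively-homogeneous calculus of \cref{lem:Subdiff}.
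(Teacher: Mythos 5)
Your proposal is correct and follows essentially the same route as the paper: reduction to $\partial|\cdot|_K(0)$ via \cref{lem:Subdiff}, the inclusion ``$\supseteq$'' from the dual representation in \cref{TheoremAltKRepr} (equivalently \cref{cor:Integral}), the bound $\|s\|_\infty\le1$ by testing with characteristic functions of small subsets of the superlevel set, the bound $\|s\|_1\le K$ via the $\sigma$-finite exhaustion, and the equality $\max(\|s\|_\infty,K^{-1}\|s\|_1)=1$ at $u\ne0$ from \cref{cor:Integral} together with $|u|_K>0$. The only cosmetic differences are that you invoke monotone convergence where the paper uses Fatou's lemma, and that you make explicit the appeal to \cref{Theo:Norm} for $|u|_K>0$, which the paper leaves implicit.
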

\begin{proof}
Since $u \mapsto  \vert u \vert_K$ is convex and positively homogeneous, it follows
\[
\partial \vert \cdot \vert_K(u) = \left\lbrace s\in \partial \vert \cdot \vert_K(0) \mid \ \int_{\Om} s u \dm = \vert u \vert_K \right\rbrace
\]
by \cref{lem:Subdiff}. Therefore, it suffices to compute $\partial \vert \cdot \vert_K(0)$.

Let $s\in L^{\infty}(\Om)$ be given with $\Vert s \Vert_{\infty} \leq 1$ and $\Vert s \Vert_1 \leq K$.
Then \cref{cor:Integral} implies $\int_\Om s \cdot v \dm \leq \vert v \vert_K$ for all $v \in L^1(\Omega)$, i.e., $s\in \partial |\cdot|_K(0)$.
Now let $s \in \partial |\cdot|_K(0)$.
This is equivalent to
\[
\int_\Omega s \cdot v \dm \le |v|_K
\]
for all $v \in L^1(\Omega)$. Suppose $\Vert s \Vert_\infty > 1$, take $A \subset \{x \in \Omega \mid \vert s(x) \vert >1\}$ such that $\mu(A) \in (0,K)$ and set $v = \chi_A \sign(s)$. Then $\int_\Omega s \cdot v \dm > \mu(A) = |v|_K$.
Hence $\|s\|_\infty \le 1$.

Let $v_n := \sum_{k=1}^n \sign(s) \chi_{\Om_k} \in L^1(\Om)$, where $(\Om_k)$ is defined at the beginning of \cref{section:atom-free}.
Then the sequence $(sv_n)$ is non-negative and converges pointwise a.e.\@ to $\vert s \vert$.
Fatou's Lemma implies
\[
\Vert s \Vert_1 = \int_\Om \vert s \vert \dm \leq \liminf_{n \to \infty} \int_\Om s v_n \dm \leq \liminf_{n \to \infty} \vert v_n \vert_K \leq K.
\]
Let $s \in \partial \vert \cdot \vert_K(u)$.
Then $s \in L^1(\Om) \cap L^\infty(\Om)$ with $\int_\Om su \dm = \vert u \vert_K$ and
\cref{cor:Integral} implies that $\max\left( \Vert s \Vert_{\infty},\, K^{-1}\Vert s \Vert_1 \right) = 1$, which shows the claim.
\end{proof}

\begin{remark}
Let $K \in (0, \mu(\Om))$.
If $(\Om, \mathcal{A}, \mu)$ is atom-free, the largest-$K$-norm is a norm by \cref{Theo:Norm}.
Using  \cref{theo:Subdiff} and \cref{cor:Integral}, one can easily verify
that the dual space of $(L^1(\Om),\vert \cdot \vert_K)$ is given by $L^1(\Om) \cap L^\infty(\Om)$.
Furthermore, the dual norm, defined by
\[
\vert s \vert_K' := \sup_{u \in L^1(\Om): \vert u \vert_K \leq 1} \left\vert \langle s, u \rangle \right\vert,
\]
can be characterized as
\[
\vert s \vert_K' = \max\left( \Vert s \Vert_\infty, K^{-1} \Vert s \Vert_1 \right)
\quad \forall s \in (L^1(\Om),\vert \cdot \vert_K)'.
\]
In the finite dimensional setting this was already proven in \cite{Watson1992}.
\end{remark}

\subsection{The General Case}
\label{sec_general_measure_space}

In this section we now investigate the case where $(\Om, \mathcal{A}, \mu)$ is a general, not necessarily atom-free, $\sigma$-finite measure space.
Recall that we can write
$\Om = \bigcup_{n=1}^\infty \Om_n$ with $(\Om_n)$ pairwise disjoint and $\mu(\Om_n) < \infty$.
Due to \cite{Johnson1970}, we can decompose the measure $\mu$ into an atom-free and a purely atomic part.

\begin{proposition}\label{prop_decomposition}
There are measures $\mu_{\mathrm{pa}}$ and $\mu_{\mathrm{af}}$  such that $\mu = \mu_{\mathrm{pa}} + \mu_{\mathrm{af}}$, where $\mu_{\mathrm{pa}}$ is purely atomic, and $\mu_{\mathrm{af}}$ is atom-free.
These measures are mutually singular ($\mu_{\mathrm{pa}}  \perp \mu_{\mathrm{af}}$), i.e.,
there exists $Y \in \mathcal{A}$ such that $\mu_{\mathrm{pa}} (A) = \mu(A \cap Y)$ and $\mu_{\mathrm{af}}(A) = \mu(A \setminus Y)$ for all $A\in \mathcal A$.
In addition, $\mu_{\mathrm{af}}(A) = 0$ for all atoms $A \in \mathcal A$ of $\mu$.
\end{proposition}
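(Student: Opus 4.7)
The plan is to construct the set $Y$ explicitly as a union of atoms and then read off the decomposition $\mu = \mu_{\mathrm{pa}} + \mu_{\mathrm{af}}$ by restricting $\mu$ to $Y$ and $\Om\setminus Y$ respectively. The $\sigma$-finiteness of $\mu$ is crucial because it will allow me to enumerate the atoms: on each $\Om_n$ with $\mu(\Om_n)<\infty$, any family of pairwise essentially disjoint atoms can contain only countably many members (since each atom has positive measure and their measures sum to at most $\mu(\Om_n)$).

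First, inside each $\Om_n$ I would pick, using Zorn's lemma or a standard exhaustion argument, a maximal countable family $(A_k^{(n)})_k$ of pairwise disjoint atoms of $\mu$ contained in $\Om_n$. Setting $Y_n := \bigcup_k A_k^{(n)}$ and $Y := \bigcup_n Y_n$ gives a measurable set, and I define
\[
\mu_{\mathrm{pa}}(A) := \mu(A \cap Y), \qquad \mu_{\mathrm{af}}(A) := \mu(A \setminus Y) \qquad \forall A \in \mathcal{A}.
\]
Mutual singularity is then immediate from the definition, and $\mu = \mu_{\mathrm{pa}}+\mu_{\mathrm{af}}$ is clear from additivity.

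Next I would verify that $\mu_{\mathrm{pa}}$ is purely atomic and $\mu_{\mathrm{af}}$ is atom-free. For the first, let $B \in \mathcal{A}$ satisfy $\mu_{\mathrm{pa}}(B) > 0$. Then $\mu(B \cap A_k^{(n)}) > 0$ for some $k,n$, and because $A_k^{(n)}$ is an atom of $\mu$, the set $A_k^{(n)} \cap B$ is an atom of $\mu_{\mathrm{pa}}$ sitting inside $B$. For the second, suppose $B \subset \Om \setminus Y$ were an atom of $\mu_{\mathrm{af}}$. Because $\mu_{\mathrm{af}}$ and $\mu$ coincide on subsets of $\Om \setminus Y$, $B$ would also be an atom of $\mu$. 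But then $B$ is an atom of $\mu$ disjoint from all $A_k^{(n)}$, contradicting the maximality used in their construction. Finally, the statement $\mu_{\mathrm{af}}(A)=0$ for every atom $A$ of $\mu$ follows because any such atom $A$ must essentially coincide with some $A_k^{(n)}$ (one uses that $A \cap A_k^{(n)}$ and $A \setminus A_k^{(n)}$ satisfy the atomicity dichotomy for both $A$ and $A_k^{(n)}$, forcing one of them to be $\mu$-null), hence $A \setminus Y$ is $\mu$-null.

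The main obstacle is the maximality step: one needs to know that a maximal pairwise essentially disjoint family of atoms exhausts, up to null sets, every atom of $\mu$. Once this is established the rest of the argument is bookkeeping. On finite pieces $\Om_n$ the argument is transparent via an exhaustion $A_1^{(n)}, A_2^{(n)}, \ldots$ chosen to have $\mu(A_k^{(n)})$ at least half of the supremum of measures of atoms not yet selected, and $\sigma$-finiteness patches these finite pieces together. Alternatively, one simply invokes \cite{Johnson1970} directly, as the paper does.
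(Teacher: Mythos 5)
Your proposal is correct, but it takes a genuinely different route from the paper. The paper does not construct the decomposition at all: it invokes \cite[Corollary 2.6]{Johnson1970} for the existence of $\mu = \mu_{\mathrm{pa}} + \mu_{\mathrm{af}}$ and \cite[Theorem 3.3]{Johnson1967} (together with $\sigma$-finiteness) for the mutual singularity, and only proves the final claim directly --- and it does so by a contrapositive argument that is independent of any construction: if $\mu_{\mathrm{af}}(A) > 0$, then atom-freeness of $\mu_{\mathrm{af}}$ yields $B \subset A$ with $\mu_{\mathrm{af}}(B) \in (0, \mu_{\mathrm{af}}(A))$, whence $\mu(B) > 0$ and $\mu(A \setminus B) > 0$, so $A$ is not an atom of $\mu$. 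Your construction of $Y$ as a maximal (necessarily countable, by $\sigma$-finiteness) essentially disjoint family of atoms is self-contained and elementary, and it delivers the mutual singularity for free as part of the definition rather than as a separate citation; the price is the maximality/exhaustion bookkeeping you correctly identify as the main obstacle, which your greedy half-supremum selection on each $\Om_n$ does resolve (the selected measures are summable, so the running supremum of measures of remaining atoms tends to zero, and no atom can survive in the complement). Two small points: your verification that a leftover atom of $\mu_{\mathrm{af}}$ in $\Om \setminus Y$ would be an atom of $\mu$ uses that the two measures coincide on subsets of $\Om \setminus Y$, which is exactly right; and for the final claim you could shortcut your "essentially coincides with some $A_k^{(n)}$" argument by simply observing that $\mu_{\mathrm{af}}(A) = \mu(A \setminus Y) > 0$ would make $A \setminus Y$ an atom of $\mu$ disjoint from $Y$, contradicting maximality directly --- or by borrowing the paper's construction-free contrapositive, which is the slicker of the two.
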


\begin{proof}
The decomposition was constructed in \cite[Corollary 2.6]{Johnson1970} in a more general setting, while the mutual singularity is a consequence of $\sigma$-finiteness and \cite[Theorem 3.3]{Johnson1967}.
Let $A \in \mathcal A$ with $\mu_{\mathrm{af}}(A) > 0$. Then there is $B \subset A$ with $\mu_{\mathrm{af}}(B)\in (0,\mu_{\mathrm{af}}(A))$.
It follows $\mu_{\mathrm{pa}}(B) \in [0,\mu_{\mathrm{pa}}(A)]$ and $\mu(B) \in (0,\mu(A))$, and $A$ is not an atom of $\mu$.
\end{proof}

The goal of this section is to generalize the results of the atom-free case from \cref{section:atom-free}.
In contrast to \cref{Theo:Norm}, $\vert \cdot \vert_K$ is not a norm in this general setting.
If there is an atom $X \in \mathcal A$ of $\mu$ with $\mu(X)>K$, then $\vert \chi_{X} \vert_K = 0$.
Nevertheless, we will prove the existence of a set that attains the supremum in the definition of $\vert \cdot \vert_K$.
In addition, we will prove that the reformulation of the $L^0$-constraint is applicable in the general case.

In addition, we also give some counter examples to results from \cref{section:atom-free},  which are no longer valid. These counterexamples will be based on the following setup.

\begin{example}\label{Ex:Setup}
We choose
\[
\Om := (0,1) \cup (1,3) \cup (3,6), \quad \mathcal{A} := \mathcal{A}_\sigma\left( \mathcal{L}((0,1)) \cup \{(1,3), (3,6) \} \right),
\]
so that $\mathcal{A}$ is the $\sigma$-algebra generated by $ \{(1,3), (3,6) \} $ and the Lebesgue measurable subsets of $(0,1)$.
In addition, we take $\mu$ to be the Lebesgue measure.
The resulting measure space is separable and finite, but neither atom-free nor purely atomic.
Define the function
\[
u := 4 \left( \chi_{(0,1)} + \chi_{(1,3)}  \right) + 3 \chi_{(3,6)}.
\]
For $K := 4$ we then obviously have
\[
\vert u \vert_K = \int_{(0,1) \cup (3,6)} \vert u \vert \dm = 13.
\]
Note that for all the following counter examples a purely atomic measure space generated by three atoms, e.g., $\mathcal{A}_\sigma\left(\{(0,1), (1,3), (3,6) \} \right)$, would be sufficient.
Nevertheless, we use this example to demonstrate that even if the measure space contains an atom-free part,
some results from \cref{section:atom-free} are not true in the general setting.
\qed
\end{example}

Since the measure space is $\sigma$-finite, we have $L^\infty(\Om) = L^1(\Om)'$.
The set $D$ from \cref{LemmaExtD} is compact in the weak-* topology on $L^\infty(\Om)$.

\begin{lemma}\label{lem_D_compact}
Let $K \in (0, \mu(\Om))$ be given.
Then the set
\[
D = \left\lbrace d \in L^{\infty}(\Om) \mid d \geq 0,\, \Vert d \Vert_{\infty} \leq 1,\, \Vert d \Vert_1 \leq K \right\rbrace
\]
from \cref{LemmaExtD} is weak-* compact in $L^\infty(\Om)$.
\end{lemma}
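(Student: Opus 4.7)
The plan is to realize $D$ as a weak-* closed subset of the closed unit ball of $L^\infty(\Omega)$, and then invoke the Banach-Alaoglu theorem. Since $(\Omega,\mathcal A,\mu)$ is $\sigma$-finite, $L^\infty(\Omega)$ is the topological dual of $L^1(\Omega)$, so Banach-Alaoglu yields that the set $B := \{d \in L^\infty(\Omega) \mid \|d\|_\infty \le 1\}$ is weak-* compact. Since $D \subset B$, it suffices to show that $D$ is weak-* closed in $L^\infty(\Omega)$.

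I would express the remaining defining properties of $D$ as intersections of weak-* closed halfspaces. For the sign constraint, I use that $d \ge 0$ a.e.\ is equivalent to $\langle d, \chi_A\rangle \ge 0$ for every $A \in \mathcal A$ with $\mu(A) < \infty$ (which gives enough test functions by $\sigma$-finiteness). Each such inequality defines a weak-* closed halfspace, since $\chi_A \in L^1(\Omega)$, so their intersection $\{d \in L^\infty(\Omega) \mid d \ge 0\}$ is weak-* closed.

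The main obstacle is the integrability condition $\|d\|_1 \le K$, because $1 \notin L^1(\Omega)$ in general and $d \mapsto \|d\|_1$ need not be weak-* continuous. I would handle this by means of the $\sigma$-finite decomposition $\Omega = \bigcup_{n=1}^\infty \Omega_n$ with $\mu(\Omega_n) < \infty$: set $v_N := \sum_{n=1}^N \chi_{\Omega_n} \in L^1(\Omega)$, so that $v_N \nearrow 1$ pointwise. For any $d \ge 0$, monotone convergence gives
\[
\|d\|_1 = \int_\Omega d \dm = \lim_{N \to \infty} \int_\Omega d\, v_N \dm = \sup_{N \in \N} \langle d, v_N \rangle_{L^\infty, L^1}.
\]
Hence, for $d$ already satisfying $d \ge 0$, the condition $\|d\|_1 \le K$ is equivalent to $\langle d, v_N \rangle \le K$ for every $N \in \N$, and each of these is again a weak-* closed halfspace.

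Putting it together, $D$ is the intersection of $B$ with countably many weak-* closed sets, hence weak-* closed, and being contained in the weak-* compact set $B$ it is itself weak-* compact. No delicate approximation is required beyond the use of the $\sigma$-finite exhaustion to handle the non-finiteness of $\mu(\Omega)$; all other conditions are immediate from the duality $L^\infty(\Omega) = L^1(\Omega)'$.
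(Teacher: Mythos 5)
Your proof is correct and follows essentially the same route as the paper: both realize $D$ as the intersection of the weak-* compact unit ball (Banach--Alaoglu, using $L^\infty(\Om)=L^1(\Om)'$ by $\sigma$-finiteness) with weak-* closed halfspaces cut out by $L^1$-test functions. Your use of the exhaustion $v_N=\sum_{n=1}^N\chi_{\Om_n}$ together with monotone convergence to handle $\Vert d\Vert_1\le K$ when $\mu(\Om)=\infty$ is a slightly more explicit version of the paper's intersection over the unit ball of $L^\infty$, but the argument is the same in substance (only note that the positivity constraint involves an uncountable, not countable, family of halfspaces, which is harmless).
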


\begin{proof}
We can write
\[
D = \{ d \geq 0\} \cap \{ \Vert d \Vert_{\infty} \leq 1 \} \cap \{ \Vert d \Vert_1 \leq K \} =: D_1 \cap D_2 \cap D_3.
\]
The closed unit ball $D_2$ is known to be weak-* compact by Banach-Alaoglu \cite[Theorem 3.16]{Brezis2011}.
Therefore, it suffices to show that $D$ is weak-* closed.

For any $v \in L^1(\Om)$ define the function $f_v: L^\infty(\Om) \to \R$ by $f_v(d) := \int_\Om d v \dm$.
Then, since $f_v$ is weak-* continuous for every $v \in L^1(\Om)$, and
\[
D_1 = \bigcap_{v \in L^1(\Om): v \geq 0} f_v^{-1}([0,\infty)), \quad D_3 = \bigcap_{v \in L^\infty(\Om): \Vert v \Vert_\infty \leq 1} f_v^{-1}([-K, K]),
\]
this shows that $D_1$ and $D_3$, and therefore $D$, is weak-* closed.
\end{proof}

This result enables us to prove the existence of a set that attains the supremum in the largest-$K$-norm.

\begin{theorem}\label{Theo:GeneralExist}
Let $u \in L^1(\Om)$ and $K \in (0, \mu(\Om))$ be given.
Then there exists a set $A \in \mathcal{A}$ with $\mu(A)\leq K$ and $\vert u \vert_K = \int_{A} \vert u \vert \dm$.
\end{theorem}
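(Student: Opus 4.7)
The plan is to combine the weak-$*$ compactness of $D$ from \cref{lem_D_compact} with the atom-free existence result of \cref{TheoremAltKRepr}. I would start by taking a maximizing sequence $(A_n) \subset \mathcal A$ with $\mu(A_n) \leq K$ and $\int_{A_n} \vert u \vert \dm \to \vert u \vert_K$. The characteristic functions $\chi_{A_n}$ then form a sequence in the weak-$*$ compact set $D$ of \cref{LemmaExtD}, so after passing to a subnet (or a subsequence, when $L^1(\Om)$ is separable), we may assume $\chi_{A_n} \rightharpoonup^* d^*$ for some $d^* \in D$. Since $\vert u \vert \in L^1(\Om)$, weak-$*$ continuity of the functional $d \mapsto \int_\Om d \vert u \vert \dm$ gives $\int_\Om d^* \vert u \vert \dm = \vert u \vert_K$, and the task reduces to producing a characteristic function with the same value of this integral.

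To extract such a set from $d^*$, I split $\Om$ using the decomposition $\mu = \mu_{\mathrm{pa}} + \mu_{\mathrm{af}}$ from \cref{prop_decomposition}, with associated set $Y$. On $Y$, I expect $d^*$ to be automatically characteristic: every atom $a \in \mathcal A$ has finite measure (otherwise $\sigma$-finiteness together with the atom property would force $\mu(a \cap \Om_n) = 0$ for every $n$, contradicting $\mu(a) > 0$), so $\chi_a \in L^1(\Om)$. Testing weak-$*$ convergence against $\chi_a$ and using that $\mu(A_n \cap a) \in \{0, \mu(a)\}$ forces $d^*|_a = \mu(a)^{-1} \lim_n \mu(A_n \cap a) \in \{0,1\}$. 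Hence $d^*|_Y = \chi_{A_{\mathrm{pa}}^*}$ for some measurable $A_{\mathrm{pa}}^* \subset Y$, with $\mu(A_{\mathrm{pa}}^*) = \int_Y d^* \dm \leq K$.

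For the atom-free complement $\Om \setminus Y$, I apply the atom-free theory of \cref{section:atom-free} to the restricted $\sigma$-finite atom-free space $(\Om \setminus Y, \mathcal A \cap (\Om \setminus Y), \mu_{\mathrm{af}})$. With $k := K - \mu(A_{\mathrm{pa}}^*)$, the restriction $d^*|_{\Om \setminus Y}$ lies in the corresponding set $D_k$ on $\Om \setminus Y$, so that \cref{TheoremAltKRepr} applied there furnishes a set $B_{\mathrm{af}}^* \subset \Om \setminus Y$ with $\mu(B_{\mathrm{af}}^*) \leq k$ and $\int_{B_{\mathrm{af}}^*} \vert u \vert \dm \geq \int_{\Om \setminus Y} d^* \vert u \vert \dm$. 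The boundary cases $k = 0$ and $k = \mu(\Om \setminus Y)$, which fall outside the hypothesis of \cref{TheoremAltKRepr}, are handled directly by taking $B_{\mathrm{af}}^* = \emptyset$ or $B_{\mathrm{af}}^* = \Om \setminus Y$, respectively. Setting $A := A_{\mathrm{pa}}^* \cup B_{\mathrm{af}}^*$ then gives $\mu(A) \leq K$ together with $\int_A \vert u \vert \dm \geq \int_\Om d^* \vert u \vert \dm = \vert u \vert_K$, and the reverse inequality is immediate from the definition of $\vert u \vert_K$.

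The main technical obstacle I anticipate is the passage to the weak-$*$ limit when $L^1(\Om)$ fails to be separable, which forces the use of a subnet rather than a subsequence; all other ingredients—the finiteness of atoms, the characteristic-function conclusion on $Y$, and the reduction on $\Om \setminus Y$ to the atom-free machinery of the previous subsection—then follow rather mechanically once the compactness framework is in place.
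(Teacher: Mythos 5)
Your proposal is correct and follows essentially the same route as the paper's proof: the paper obtains the weak-$*$ maximizer in $D$ by taking the weak-$*$ closure of the set of admissible characteristic functions and using compactness, rather than extracting a subnet from a maximizing sequence, but this is only a cosmetic difference. The remaining steps---testing against atoms (of finite measure, by $\sigma$-finiteness) to see that the limit is characteristic on the purely atomic part $Y$, and then invoking the atom-free machinery of \cref{Goal1} and \cref{TheoremAltKRepr} on $\Om\setminus Y$ with the leftover budget before recombining the two pieces---coincide with the paper's argument.
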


\begin{proof}
Let
\[
C := \left\lbrace \chi_A \in L^\infty(\Om) \mid A \in \mathcal{A}, \, \mu(A) \leq K \right\rbrace.
\]
Let $\overline{C}^*$ denote the weak-* closure of $C$ in $L^\infty(\Om)$.
Due to $C \subset D$, the set $\overline{C}^*$ is weak-* compact by \cref{lem_D_compact}.
Since the functional $f \mapsto \int_\Om f \vert u \vert \dm$ is weak-* continuous, we have
\[
\vert u \vert_K = \sup_{f \in C} \int_\Om f \vert u \vert \dm = \sup_{f \in \overline{C}^*} \int_\Om f \vert u \vert \dm,
\]
and hence there exists $f \in \overline{C}^* \subset D$, which attains the supremum.

Let $X \subset  \Omega$ be an atom of $\mu$. Define the weak-* continuous functional $g: L^\infty(\Om) \to \R$ by $g(f) := \int_X f \dm$.
For any $\chi_A \in C$ it holds $g(\chi_A) = \mu(A \cap X) \in \{0, \mu(X)\}$ and hence the continuity of $g$ implies
\[
g\left( \overline{C}^* \right) \subseteq \overline{g(C)} = \{0, \mu(X)\}.
\]
It follows $\int_X f \dm \in \{0, \mu(X)\}$, so that $f|_X = 0$ or $f|_X = 1$ $\mu$-a.e.\@ on $X$.
Hence, there is $B \subset Y$, with $Y$ from \cref{prop_decomposition}, such that $f= \chi_B$ $\mu_{\mathrm{pa}}$-almost everywhere. Due to \cref{prop_decomposition}, we have $\mu_{\mathrm{af}}(B)=0$.

Since the measure $\mu_{\mathrm{af}}$ is atom-free, we can apply \cref{Goal1} and \cref{TheoremAltKRepr}.
Let us set $K' :=  \int_\Om f \dm_{\mathrm{af}} \in [0,K]$.
Then there is $A \in \mathcal A$ with $\mu_{\mathrm{af}}(A) \le K'$ such that
\[
 \int_A \vert u \vert \dm_{\mathrm{af}} \ge \int_\Om g \vert u \vert \dm_{\mathrm{af}}
\]
for all $g\in L^\infty(\Omega)$ with $g \geq 0$, $\|g\|_\infty \leq 1$ and $\int_\Om g \dm_{\mathrm{af}} \le K'$.
Due to \cref{prop_decomposition}, we can assume $\mu_{\mathrm{pa}}(A)=0$.
This implies $\mu( A \cup B) \le K$ and $\mu(A\cap B)=0$.
In addition, we get
\begin{align*}
\int_{A \cup B} |u| \dm &= \int_A |u|\dm_{\mathrm{af}} + \int_B |u| \dm_{\mathrm{pa}} \\
&\ge \int_\Om f |u|\dm_{\mathrm{af}}+ \int_\Om f |u| \dm_{\mathrm{pa}} = \int_\Om f|u|\dm = \vert u \vert_K,
\end{align*}
which is the claim.
\end{proof}

The following example shows that there is not always a set which has both, or even only one, of the sufficient properties of \cref{lem_goal1_suff} and attains the supremum in $\vert \cdot \vert_K$.

\begin{example}
In the setup of \cref{Ex:Setup} we have
\begin{align*}
\mu(\Om_{>t}) &= \mu(\Om) = 6 > K \quad \text{for all } t < 3, \\
\mu(\Om_{\geq t}) &= \mu((0,1) \cup (1,3)) = 3 < K \quad \text{for all } 3 < t \leq 4, \\
\Om_{>3} &= (0,1) \cup (1,3), \\
\Om_{\geq 3} &= \Om.
\end{align*}
Therefore, there can only exist a set $\Om_{>t} \subseteq A \subseteq \Om_{\geq t}$ with $\mu(A) = K$ for $t = 3$.
In this case no such set $A$ exists, since the only possible sets are $(0,1) \cup (1,3)$ and $\Om$.

Moreover, the supremum is not even attained by a feasible upper level set since
\[
\int_{\Om_{>3}} \vert u \vert \dm = 12 < 13 = \vert u \vert_K.
\]
Furthermore, if we choose $K = 4.5$ the supremum is still exclusively attained by $(0,1) \cup (3,6)$, i.e., for a set $A$ with $\mu(A) < K$.
\hfill \qed
\end{example}

As already mentioned in \cref{Remark:Reform}, the proof for the equivalent reformulations of the $L^0$-constraint only required the measure space to be atom-free in one step.
Therefore, the reformulation as $\Vert u \Vert_1 - \vert u \vert_K = 0$ is still possible in the general case.

\begin{theorem}\label{Theo:ReformSepa}
Let $u \in L^1(\Om)$ and $K \in (0, \mu(\Om))$ be given.
Then the following statements are equivalent:
\begin{enumerate}
\item \label{Reform_a} $\Vert u \Vert_0 \leq K$,
\item \label{Reform_b} $\Vert u \Vert_1 = \vert u \vert_K $,
\item \label{Reform_c} $\vert u \vert_{H}= \vert u \vert_K $ for all $H \in (K, \mu(\Om)]$.
\end{enumerate}
In addition, these statements imply
\begin{enumerate}
\setcounter{enumi}3
\item \label{Reform_d} $\vert u \vert_h = \vert u \vert_K $ for one $h \in (K, \mu(\Om)]$.
\end{enumerate}
\end{theorem}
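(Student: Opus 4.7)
The plan is to mimic the proof of \cref{GenTheorem1}, noting that of the four implications shown there only (b)~$\Rightarrow$~(a) required atom-freeness, as emphasized in \cref{Remark:Reform}. The present statement deliberately omits the analogue of that implication (namely $(\ref{Reform_d}) \Rightarrow (\ref{Reform_a})$), so the obstruction disappears. To close the cycle $(\ref{Reform_a}) \Leftrightarrow (\ref{Reform_b}) \Leftrightarrow (\ref{Reform_c})$, I would run the chain $(\ref{Reform_a}) \Rightarrow (\ref{Reform_b}) \Rightarrow (\ref{Reform_c}) \Rightarrow (\ref{Reform_b}) \Rightarrow (\ref{Reform_a})$ and then observe that $(\ref{Reform_c}) \Rightarrow (\ref{Reform_d})$ is immediate.

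Three of these implications copy directly from the atom-free case and use no structural assumption on $(\Om,\mathcal A,\mu)$. For $(\ref{Reform_a}) \Rightarrow (\ref{Reform_b})$, I would set $A := \supp u$; then $\mu(A) = \|u\|_0 \le K$, so $|u|_K \ge \int_A |u|\d\mu = \|u\|_1$, and the reverse inequality is trivial from the definition. For $(\ref{Reform_b}) \Rightarrow (\ref{Reform_c})$, I would combine the monotonicity of $H \mapsto |u|_H$ with the identity $|u|_{\mu(\Om)} = \|u\|_1$ to squeeze $|u|_K \le |u|_H \le |u|_{\mu(\Om)} = |u|_K$ for every $H\in(K,\mu(\Om)]$. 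The reverse step $(\ref{Reform_c}) \Rightarrow (\ref{Reform_b})$ follows by specializing to $H = \mu(\Om)$, and $(\ref{Reform_c}) \Rightarrow (\ref{Reform_d})$ needs no argument at all.

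The single step requiring any technical input is $(\ref{Reform_b}) \Rightarrow (\ref{Reform_a})$, where in the atom-free case one would invoke \cref{Goal1}. In the general setting I would instead appeal to \cref{Theo:GeneralExist}, which supplies a set $A\in\mathcal A$ with $\mu(A) \le K$ and $\int_A |u|\d\mu = |u|_K$. Under hypothesis $(\ref{Reform_b})$ this integral equals $\|u\|_1$, hence $\int_{\Om\setminus A} |u|\d\mu = 0$, so $u = 0$ $\mu$-a.e.\ on $\Om\setminus A$. Consequently $\supp u \subset A$ up to a null set, and $\|u\|_0 = \mu(\supp u) \le \mu(A) \le K$.

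The main obstacle, namely the existence of a maximizing set for the defining supremum of $|\cdot|_K$ in the absence of atom-freeness, has already been resolved in \cref{Theo:GeneralExist} via the weak-* compactness argument of \cref{lem_D_compact} together with the decomposition from \cref{prop_decomposition}. Once that result is in hand, the present theorem drops out as a direct corollary, and no further work on the measure-theoretic side is required.
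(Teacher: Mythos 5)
Your proposal is correct and follows essentially the same route as the paper: the cycle $(\ref{Reform_a})\Rightarrow(\ref{Reform_b})\Rightarrow(\ref{Reform_c})\Rightarrow(\ref{Reform_b})$ is handled exactly as in \cref{GenTheorem1}, and the key step $(\ref{Reform_b})\Rightarrow(\ref{Reform_a})$ uses \cref{Theo:GeneralExist} to produce a maximizing set $A$ with $\mu(A)\le K$ and then concludes $u=0$ a.e.\ off $A$, just as the paper does. No gaps.
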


\begin{proof}
The implications (\ref{Reform_a}) $\Rightarrow$ (\ref{Reform_b})  $\Rightarrow$ (\ref{Reform_c})  $\Rightarrow$ (\ref{Reform_d}) follow
as in the proof of \cref{GenTheorem1}.
Since (\ref{Reform_c}) trivially implies (\ref{Reform_b}), we only have to show (\ref{Reform_b}) $\Rightarrow$ (\ref{Reform_a}):
By \cref{Theo:GeneralExist} there exists $A \in \mathcal{A}$ with $\mu(A) \leq K$ and $\int_A \vert u \vert \dm = \vert u \vert_K$.
By considering $A \cap \supp(u)$, we can assume that $A \subseteq \supp(u)$.
Then $\Vert u \Vert_1 = \vert u \vert_K$ implies
\[
0 = \int_\Om \vert u \vert \dm - \int_A \vert u \vert \dm = \int_{\supp(u) \setminus A} \vert u \vert \dm,
\]
and therefore $\mu(\supp(u) \setminus A) = 0$.
This shows $\Vert u \Vert_0 = \mu(A) \leq K$.
\end{proof}

\begin{example}
Note that implication (\ref{Reform_d}) $\Rightarrow$ (\ref{Reform_a}) of  \cref{Theo:ReformSepa} does not hold
in the setup of \cref{Ex:Setup}: Here, we have $\Vert u \Vert_0 = 6 > K$ but $\vert u \vert_K = 13 = \vert u \vert_h$ for every $K < h < 5$.
\end{example}

In contrast to \cref{LemmaExtD},
the set $D$ has extreme points that are not characteristic functions.
This suggests that $D$ is much larger than
the weak-* closed and convex hull of $C= \left\lbrace \chi_A \in L^\infty(\Om) \mid A \in \mathcal{A}, \, \mu(A) \leq K \right\rbrace$.

\begin{lemma}
Let $K \in (0, \mu(\Om))$ be given.
Then the extreme points $\ext(D)$ of the set $D$ from \cref{lem_D_compact} are given by
\begin{multline*}
\ext(D) =
\left\lbrace \chi_A \mid \mu(A) \leq K \right\rbrace
\\
\cup \ \left\lbrace \chi_A + \alpha \chi_{X} \mid X \text{ atom} ,\, A \cap X = \emptyset, \, \alpha \in (0,1), \, \mu(A) + \alpha \mu(X) = K \right\rbrace.
\end{multline*}
\end{lemma}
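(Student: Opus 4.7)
The plan is to prove the two inclusions separately. For the inclusion ``$\supseteq$'', I would fix $d$ of one of the two listed forms, assume $d=\lambda d_1+(1-\lambda)d_2$ with $d_1,d_2\in D$ and $\lambda\in(0,1)$, and show $d_1=d_2=d$ a.e. For $d=\chi_A$ this is immediate since $\{0,1\}=\ext([0,1])$ and $d_1,d_2$ take values in $[0,1]$ a.e. For $d=\chi_A+\alpha\chi_X$ the same pointwise argument forces $d_i=\chi_A$ a.e. on $\Om\setminus X$. Because $X$ is an atom, every measurable function on $X$ is constant a.e. (its level sets have measure $0$ or $\mu(X)$), so $d_i\equiv c_i\in[0,1]$ a.e. on $X$. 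The $L^1$-constraint $\|d_i\|_1\le K=\mu(A)+\alpha\mu(X)$ gives $c_i\le\alpha$, and then $\lambda c_1+(1-\lambda)c_2=\alpha$ with $\lambda\in(0,1)$ forces $c_1=c_2=\alpha$.

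For the inclusion ``$\subseteq$'' I would take $d\in\ext(D)$ and organise the analysis according to the decomposition $\mu=\mu_{\mathrm{pa}}+\mu_{\mathrm{af}}$ of \cref{prop_decomposition}, with the associated set $Y$. On each atom $X$ of $\mu$ the restriction $d|_X$ is constant a.e., say $d|_X=\alpha_X\in[0,1]$, by the atom argument above. I would next show that $d$ is a.e.\ $\{0,1\}$-valued on $\Om\setminus Y$ by imitating the construction from \cref{LemmaExtD}: if the set $B_\varepsilon:=\{x\in\Om\setminus Y:\varepsilon<d(x)<1-\varepsilon\}$ had positive measure for some $\varepsilon>0$, then $\mu$ restricted to $\Om\setminus Y$ is atom-free (by \cref{prop_decomposition}, any atom of $\mu$ lying in $\Om\setminus Y$ would force $\mu_{\mathrm{af}}(\text{atom})=\mu(\text{atom})>0$, a contradiction), so \cref{Coratomfree} produces $B_1\subset B_\varepsilon$ with $\mu(B_1)=\tfrac12\mu(B_\varepsilon)$. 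Then $d_{\pm}:=d\pm\varepsilon(\chi_{B_1}-\chi_{B_\varepsilon\setminus B_1})$ lie in $D$ (the perturbation preserves $\|\cdot\|_1$ and keeps values in $[0,1]$) and $d=\tfrac12(d_++d_-)$ with $d_+\neq d_-$, contradicting extremality.

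Hence $d|_{\Om\setminus Y}=\chi_{A_0}$ for some $A_0$ and $d=\chi_{A_0}+\sum_{X\text{ atom}}\alpha_X\chi_X$. Next I would rule out having two atoms $X_1\neq X_2$ with $\alpha_{X_i}\in(0,1)$: choose $\varepsilon_1,\varepsilon_2>0$ small with $\alpha_{X_i}\pm\varepsilon_i\in(0,1)$ and $\varepsilon_1\mu(X_1)=\varepsilon_2\mu(X_2)$, and set
\[
d_\pm:=d\pm\varepsilon_1\chi_{X_1}\mp\varepsilon_2\chi_{X_2}.
\]
Then $d_\pm\in D$, $\|d_\pm\|_1=\|d\|_1$, and $d=\tfrac12(d_++d_-)$, contradicting $d\in\ext(D)$. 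So at most one atom $X$ has $\alpha_X\in(0,1)$. If none does, $d$ is a characteristic function $\chi_A$ with $\mu(A)\le K$, yielding the first form. If exactly one atom $X^\ast$ has $\alpha:=\alpha_{X^\ast}\in(0,1)$, write $d=\chi_A+\alpha\chi_{X^\ast}$ with $A\cap X^\ast=\emptyset$. A final perturbation $d_\pm:=\chi_A+(\alpha\pm\varepsilon)\chi_{X^\ast}$ belongs to $D$ for sufficiently small $\varepsilon>0$ provided $\mu(A)+\alpha\mu(X^\ast)<K$; extremality forces $\mu(A)+\alpha\mu(X^\ast)=K$, giving the second form.

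The main obstacle I expect is the bookkeeping in the purely atomic perturbation step, i.e.\ making sure the two perturbations $d_\pm$ simultaneously satisfy $0\le d_\pm\le 1$ and $\|d_\pm\|_1\le K$ while being genuinely different from $d$; the trick is always to choose balanced, sign-flipping perturbations (either between two disjoint pieces of an atom-free set or between two distinct atoms weighted by the inverse of their measures) so the $L^1$-norm is exactly preserved, leaving only the $L^\infty$-bound to verify.
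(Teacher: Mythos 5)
Your proposal is correct and follows essentially the same route as the paper's proof: the same pointwise extremality argument plus the $L^1$-budget bound $c_i\le\alpha$ on the atom for the inclusion ``$\supseteq$'', and for ``$\subseteq$'' the same three perturbations (a balanced sign-flipping perturbation on an atom-free set of positive measure where $\varepsilon<d<1-\varepsilon$, a measure-weighted balanced perturbation between two distinct fractional atoms, and a one-sided perturbation on a single fractional atom when $\|d\|_1<K$). The only difference is organizational — you argue directly on $d\in\ext(D)$ via the decomposition $Y$ from \cref{prop_decomposition}, while the paper argues contrapositively via a case distinction on $E=\{0<d<1\}$ — which is immaterial.
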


\begin{proof}
Let $d$ be in the set on the right. Take $d_1,d_2 \in D$, $\lambda \in (0,1)$ with $d = \lambda d_1 + (1-\lambda)d_2$.
If $d = \chi_A$ with $\mu(A) \leq K$ we have $d_1(x), d_2(x) \in [0,1]$ and $d(x) \in \ext([0,1])$ for almost all $x \in \Om$, and therefore $d_1 = d = d_2$ almost everywhere.
This shows $d \in \ext(D)$.

If $d = \chi_A + \alpha \chi_X$ we get as before $d_1 = d = d_2$ on $\Om \setminus X$.
For $j=1,2$ we therefore have
\[
\int_{X} d_j \dm = \Vert d_j \Vert_1 - \int_{\Om \setminus X} d \dm \leq K - \mu(A) = \alpha \mu(X),
\]
and hence $0 \leq d_j \leq \alpha$ a.e.\@ on $X$.
With the same reasoning as before we get $d_1 = \alpha = d = d_2$ a.e.\@ on $X$ and therefore $d \in \ext(D)$.

Now let $d \in D$ be not in the set on the right.
Then $d \in D \setminus \{ \chi_A \mid \mu(A) \leq K \}$, and therefore $d$ cannot be a characteristic function.
Hence, we have $\mu(E) > 0$ for $E := \{ x \in \Om \mid 0 < d(x) < 1\}$.
We distinguish three cases:
(1) $\mu_{\mathrm{af}} (E) > 0$, (2) $E$ is an atom, (3) $E$ contains at least two disjoint atoms.

(1) There exists $\tilde{E} \subseteq E$ and $\varepsilon > 0$ such that $0 < \mu_{\mathrm{af}}(\tilde{E}) < \infty$, $\varepsilon < d < 1-\varepsilon$ a.e.\@ on $\tilde{E}$, and $\mu_{\mathrm{pa}}(\tilde{E})=0$.
Since $\mu_{\mathrm{af}}$ is atom-free, there exists $B \subseteq \tilde{E}$ with $\mu(B) = \mu_{\mathrm{af}}(B) = \frac12 \mu_{\mathrm{af}}(\tilde{E}) = \frac{1}{2} \mu(\tilde{E})$.
Let us define the functions
\[
d_1 := d + \varepsilon \left( \chi_B - \chi_{\tilde{E} \setminus B} \right),
\quad
d_2 := d + \varepsilon \left( \chi_{\tilde{E} \setminus B} - \chi_B \right).
\]
Then it follows $d_1,d_2\in D$, $d_1 \neq d \neq d_2$, $d = \frac12(d_1+d_2)$, and $d\not\in\ext(D)$.

(2)
Since $d$ is measurable, and $E$ is an atom, there exists $\alpha\in (0,1)$ such that $d=\alpha$ a.e.\@ on $E$.
Then there is $A \in \mathcal{A}$ such that $d = \chi_A + \alpha \chi_E$ with $A \cap E = \emptyset$.
Due to the properties of $d$, it follows $\mu(A) + \alpha \mu(E) < K$.
Hence, there exists $\varepsilon > 0$ such that
\[
\mu(A) + (\alpha + \varepsilon) \mu(E) \leq K, \quad 0 \leq \alpha - \varepsilon, \quad \alpha + \varepsilon \leq 1.
\]
Define the functions
\[
d_1 := d + \varepsilon \chi_{E},
\quad
d_2 := d - \varepsilon \chi_{E}.
\]
Then it follows $d_1,d_2\in D$, $d_1 \neq d \neq d_2$, $d = \frac12(d_1+d_2)$, and $d\not\in\ext(D)$.

(3) In this case there exist atoms $X_1, X_2$ with $X_1 \cap X_2 = \emptyset$ and $\alpha_1, \alpha_2 \in (0,1)$ with
\[
d = d\vert_{\Om \setminus (X_1 \cup X_2)} + \alpha_1 \chi_{X_1} + \alpha_2 \chi_{X_2}.
\]
Then there exists $\varepsilon > 0$ such that
\[
(\alpha_1 \pm \varepsilon) \in [0,1], \quad (\alpha_2 \pm \varepsilon) \in [0,1].
\]
Assume w.l.o.g. that $\mu(X_1) \leq \mu(X_2)$.
Therewith we define the functions
\begin{align*}
d_1 &:= d\vert_{\Om \setminus (X_1 \cup X_2)} + (\alpha_1 - \varepsilon) \chi_{X_1} + \left(\alpha_2 + \varepsilon \frac{\mu(X_1)}{\mu(X_2)}\right) \chi_{X_2}, \\
d_2 &:= d\vert_{\Om \setminus (X_1 \cup X_2)} + (\alpha_1 + \varepsilon) \chi_{X_1} + \left(\alpha_2 - \varepsilon \frac{\mu(X_1)}{\mu(X_2)}\right) \chi_{X_2}.
\end{align*}
Again, we get $d_1,d_2 \in D$, $d_1 \neq d \neq d_2$, and $d = \frac{1}{2}(d_1 + d_2)$.
Therefore, this shows $d \notin \ext(D)$.
\end{proof}

The alternative supremum over the set $D$ from \cref{TheoremAltKRepr} is still attained but in general larger than $|u|_K$.

\begin{theorem}
For every $u \in L^1(\Om)$ and $K \in (0, \mu(\Om))$ it holds
\[
\sup_{d \in D} \int_{\Om} d \vert u \vert \dm \geq \vert u \vert_K,
\]
where $D$ is from \cref{lem_D_compact}.
The supremum on the left-hand side is attained.
\end{theorem}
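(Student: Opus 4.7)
The plan is to handle the two claims separately, both of which follow quickly from results already established earlier in the paper.

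For the inequality $\sup_{d\in D}\int_\Om d\vert u\vert\dm \ge \vert u\vert_K$, I would simply note that every admissible set for the supremum defining $\vert u\vert_K$ gives rise to an element of $D$: for any $A\in\mathcal{A}$ with $\mu(A)\le K$ the function $\chi_A$ satisfies $\chi_A\ge 0$, $\Vert\chi_A\Vert_\infty\le 1$, and $\Vert\chi_A\Vert_1=\mu(A)\le K$, hence $\chi_A\in D$. Therefore
\[
\sup_{d\in D}\int_\Om d\vert u\vert\dm \ge \sup_{A\in\mathcal{A}:\,\mu(A)\le K}\int_\Om \chi_A\vert u\vert\dm = \vert u\vert_K.
\]

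For the existence of a maximizer, the strategy is to apply a standard weak-* compactness argument. By \cref{lem_D_compact}, $D$ is weak-* compact in $L^\infty(\Om)$. Since $(\Om,\mathcal{A},\mu)$ is $\sigma$-finite, $L^\infty(\Om)=L^1(\Om)'$, and because $\vert u\vert\in L^1(\Om)$, the map
\[
L^\infty(\Om)\ni d\mapsto \int_\Om d\vert u\vert\dm
\]
is by definition weak-* continuous. A weak-* continuous functional attains its supremum on a weak-* compact set, yielding some $d^*\in D$ with $\int_\Om d^*\vert u\vert\dm=\sup_{d\in D}\int_\Om d\vert u\vert\dm$.

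There is no real obstacle here; both parts are immediate given the tools developed earlier. The only point worth stressing is that, unlike in the atom-free case of \cref{TheoremAltKRepr}, we do not assert equality of the two suprema, so we need neither a splitting-type argument as in \eqref{eq_sup_D} nor a reduction to characteristic functions. The example in \cref{Ex:Setup} already illustrates that strict inequality can occur: with $K=4$ one can choose $d=\tfrac{1}{2}\chi_{(1,3)}+\chi_{(0,1)\cup(3,6)}\in D$ and exceed $\vert u\vert_K=13$, showing that the supremum over $D$ genuinely overcounts when atoms of mass greater than $K$ are present.
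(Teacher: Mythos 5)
Your proof of the theorem itself is correct and is essentially the paper's own (one-line) argument: the inequality follows from $\{\chi_A \mid \mu(A)\le K\}\subseteq D$, and attainment from the weak-* compactness of $D$ (\cref{lem_D_compact}) together with the weak-* continuity of $d\mapsto\int_\Om d\vert u\vert\dm$. One correction to your closing side remark, which is not needed for the theorem but is wrong as stated: your proposed witness $d=\tfrac12\chi_{(1,3)}+\chi_{(0,1)\cup(3,6)}$ has $\Vert d\Vert_1=\tfrac12\cdot 2+1+3=5>4=K$, so $d\notin D$; the paper's \cref{Ex:SupremDiff} instead uses $s=\chi_{(0,1)}+\chi_{(1,3)}+\tfrac13\chi_{(3,6)}$, which has $\Vert s\Vert_1=4=K$ and gives $\int_\Om s\vert u\vert\dm=15>13$, and the strict inequality there is caused by atoms of positive mass (not necessarily of mass greater than $K$) on which a maximizing $d$ can take fractional values.
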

\begin{proof}
This is a direct consequence of \cref{lem_D_compact} and $\{\chi_A \mid \mu(A) \leq K \} \subseteq D$.
\end{proof}

The following example shows that we do not have equality in general, which is  in contrast to \cref{TheoremAltKRepr}.

\begin{example}\label{Ex:SupremDiff}
For the setup of \cref{Ex:Setup} define
\[
s := \chi_{(0,1)} + \chi_{(1,3)} + \frac{1}{3} \chi_{(3,6)} \in D.
\]
Then we get
\[
\sup_{d \in D} \int_\Om d \vert u \vert \dm \geq \int_\Om s \vert u \vert \dm = 15 > 13 = \vert u \vert_K.
\]
This also shows that the inequality of  \cref{cor:Integral} does not hold in general, when the measure space has atoms.

Furthermore, there hence exists $s \in L^\infty(\Om)$ with $\Vert s \Vert_\infty \leq 1$ and $\Vert s \Vert_1 \leq K$, as well as $u \in L^1(\Om)$, such that $\int_\Om s \cdot u \dm > \vert u \vert_K$, i.e., $s \notin \partial \vert \cdot \vert_K(0)$.
This shows that \cref{theo:Subdiff} does in general not hold anymore.
\qed
\end{example}

Using the arguments of \cref{Theo:GeneralExist}, we have the following characterization of the subdifferential of the largest-$K$-norm.
Because of \cref{lem:Subdiff} it suffices to study $\partial \vert \cdot \vert_K(0)$.

\begin{theorem}\label{Theo:GeneralSubdiffExact}
Let $K \in (0, \mu(\Om))$ be given. Then it holds
\[
 \partial \vert \cdot \vert_K(0) = \overline{\conv}^* \big( \left\{ f \in L^\infty(\Om) \mid \vert f \vert = \chi_A, \ A \in \mathcal A, \ \mu(A) \le K \right\} \big),
\]
where $\overline{\conv}^*$ denotes the weak-* closed convex hull in $L^\infty(\Omega)$.
\end{theorem}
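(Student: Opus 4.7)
The plan is to establish the two inclusions of the identity separately. Write
\[
E := \{f \in L^\infty(\Om) \mid \vert f \vert = \chi_A \text{ for some } A\in \mathcal A \text{ with } \mu(A)\le K\}.
\]

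For the inclusion $\overline{\conv}^*(E) \subseteq \partial \vert\cdot\vert_K(0)$, I would first check $E \subseteq \partial \vert\cdot\vert_K(0)$: if $f\in E$ with $\vert f\vert = \chi_A$ and $v\in L^1(\Om)$, then $\int_\Om f v\dm \le \int_A \vert v\vert \dm \le \vert v\vert_K$. Since for each $v\in L^1(\Om)$ the map $s\mapsto \int_\Om s v \dm$ is weak-* continuous on $L^\infty(\Om) = L^1(\Om)'$, the subdifferential
\[
\partial \vert\cdot\vert_K(0) = \bigcap_{v \in L^1(\Om)} \Bigl\{s\in L^\infty(\Om) \;\Big|\; \int_\Om s v\dm \le \vert v\vert_K\Bigr\}
\]
is a weak-* closed convex set, which therefore contains $\overline{\conv}^*(E)$.

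For the reverse inclusion I would argue by contradiction via Hahn--Banach separation in the dual pairing $(L^\infty(\Om), L^1(\Om))$. Assume $s\in\partial\vert\cdot\vert_K(0)\setminus \overline{\conv}^*(E)$. Applying the geometric Hahn--Banach theorem to the weak-* closed convex set $\overline{\conv}^*(E)$ and the point $s$ yields $v \in L^1(\Om)$ and $\alpha \in \R$ with
\[
\int_\Om s v\dm \;>\; \alpha \;\ge\; \int_\Om f v\dm \qquad \forall\, f \in \overline{\conv}^*(E).
\]
The crux is then the elementary identity $\sup_{f\in E} \int_\Om f v\dm = \vert v\vert_K$: for every $A\in \mathcal A$ with $\mu(A)\le K$, the function $\sign(v)\chi_A \in E$ contributes $\int_A \vert v\vert \dm$, so the supremum is at least $\vert v\vert_K$; the reverse inequality follows from $\int_\Om f v\dm \le \int_A \vert v\vert \dm \le \vert v\vert_K$ whenever $\vert f\vert = \chi_A$. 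Combining the two gives $\int_\Om s v\dm > \vert v\vert_K$, contradicting $s\in\partial\vert\cdot\vert_K(0)$.

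The only non-routine point is ensuring that the separating functional can be chosen in $L^1(\Om)$ rather than in the larger dual $L^\infty(\Om)'$. This is automatic because the separation is performed with respect to the weak-* topology on $L^\infty(\Om)$, whose continuous dual is exactly $L^1(\Om)$; the identification $L^\infty(\Om) = L^1(\Om)'$ relies on the standing $\sigma$-finiteness of $(\Om,\mathcal A,\mu)$. Notably, \cref{Theo:GeneralExist} is not required here, since only the identity $\sup_{f\in E} \int_\Om f v\dm = \vert v\vert_K$ is used, and this follows directly from the definition of the largest-$K$-norm.
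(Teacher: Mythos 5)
Your proposal is correct and follows essentially the same route as the paper: verify that the generating set lies in $\partial\vert\cdot\vert_K(0)$ (which is weak-* closed and convex), and obtain the reverse inclusion by Hahn--Banach separation in the weak-* topology of $L^\infty(\Om)=L^1(\Om)'$ together with the identity $\sup_{\vert f\vert=\chi_A,\,\mu(A)\le K}\int_\Om f v\dm=\vert v\vert_K$. The only cosmetic difference is that the paper cites the computation from \cref{Theo:GeneralExist} for this identity, while you derive it directly from the definition of the largest-$K$-norm.
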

\begin{proof}
As argued  in the proof of \cref{Theo:GeneralExist}, we have
\[
\vert u \vert_K = \sup_{f \in C} \int_\Om f \vert u \vert \dm = \sup_{\vert f \vert \in C} \int_\Om f u \dm = \sup_{f \in \overline{\conv}^*(\{\vert f \vert \in C\})} \int_\Om f u \dm,
\]
where $C = \{ \chi_A \mid A \in \mathcal A, \ \mu(A) \le K\}$.
This directly implies $\overline{\conv}^*(\{\vert f \vert \in C\}) \subset \partial \vert \cdot \vert_K(0)$.

Let us take $s \not\in \overline{\conv}^*(\{\vert f \vert \in C\})$. Then by a consequence of Hahn-Banach theorem \cite[Theorem 3.4]{Rudin1991} (in the weak-* topology of $L^\infty(\Om)$) there is, by \cite[Korollar VIII.3.4]{Werner2018}, $v \in L^1(\Om)$ such that
\[
\int_\Om s v \dm > \sup_{f \in \overline{\conv}^*(\{\vert f \vert \in C\})} \int_\Om f v \dm = \vert v \vert_K,
\]
i.e., $s \notin \partial \vert \cdot \vert_K(0)$.
\end{proof}

It seems to be challenging to get a more explicit description of the subdifferential on measure spaces with atoms.
Already in case of the largest-$K$-norm on $\R^n$ this is very technical, see \cite{Watson1992,Wu2014}.
We can still show the following inclusions for the subdifferential of the largest-$K$-norm.

\begin{theorem}\label{Theo:GeneralSubdiff}
Let $K \in (0, \mu(\Om))$ be given. Then it holds
\begin{multline*}
\left\lbrace s \in L^{\infty}(\Om) \mid \Vert s \Vert_{\infty} \leq 1,\ \Vert s \Vert_0 \leq K \right\rbrace
\\
\subseteq
\partial \vert \cdot \vert_K(0)
\subseteq
\\
\left\lbrace s \in L^{\infty}(\Om) \mid \Vert s \Vert_{\infty} \leq 1,\ \Vert s \Vert_1 \leq K \right\rbrace .
\end{multline*}
\end{theorem}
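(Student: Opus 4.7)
The plan is to prove the two inclusions separately, adapting the strategy used in the proof of \cref{theo:Subdiff} for the atom-free case while being careful about the steps that there relied on \cref{Coratomfree}. Throughout I use that, from the definition of the subdifferential together with $|0|_K = 0$, the condition $s \in \partial |\cdot|_K(0)$ is equivalent to
\[
\int_\Om s v \dm \le |v|_K \qquad \text{for all } v \in L^1(\Om).
\]

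For the first inclusion, let $s \in L^\infty(\Om)$ satisfy $\|s\|_\infty \le 1$ and $\|s\|_0 \le K$, and set $A := \{x \in \Om \mid s(x) \ne 0\}$, so that $\mu(A) \le K$. For every $v \in L^1(\Om)$ one estimates
\[
\int_\Om s v \dm = \int_A s v \dm \le \int_A |s|\,|v| \dm \le \int_A |v| \dm \le |v|_K,
\]
using $\|s\|_\infty \le 1$ in the second step and that $A$ is feasible in the supremum defining $|v|_K$ in the third. Hence $s \in \partial |\cdot|_K(0)$.

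For the second inclusion, let $s \in \partial |\cdot|_K(0)$. I first show $\|s\|_\infty \le 1$ by contradiction. Assume $\|s\|_\infty > 1$ and pick $\varepsilon > 0$ with $\mu(\{|s| \ge 1+\varepsilon\}) > 0$; using the $\sigma$-finite decomposition $\Om = \bigcup_n \Om_n$ from the notation section, some $\Om_n \cap \{|s|\ge 1+\varepsilon\}$ has positive, finite measure, so I obtain $\tilde A \subseteq \{|s| \ge 1+\varepsilon\}$ with $0 < \mu(\tilde A) < \infty$. Testing the subgradient inequality with $v := \chi_{\tilde A}\sign(s) \in L^1(\Om)$ yields $\int_\Om s v \dm \ge (1+\varepsilon)\mu(\tilde A)$, whereas $\|v\|_\infty \le 1$ and $v = 0$ outside $\tilde A$ give $|v|_K \le \min(\mu(\tilde A), K)$ immediately. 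Both cases $\mu(\tilde A) \le K$ and $\mu(\tilde A) > K$ then produce $\int_\Om s v \dm > |v|_K$, a contradiction. The bound $\|s\|_1 \le K$ transfers verbatim from the proof of \cref{theo:Subdiff}: with $v_n := \sum_{k=1}^n \sign(s)\chi_{\Om_k} \in L^1(\Om)$ we have $\|v_n\|_\infty \le 1$ and hence $|v_n|_K \le K$, and Fatou's lemma applied to the non-negative sequence $s v_n \to |s|$ pointwise a.e.\@ yields $\|s\|_1 \le \liminf_n |v_n|_K \le K$.

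The only genuinely new difficulty is the $L^\infty$-bound. In the atom-free proof, \cref{Coratomfree} is used to pick a subset of $\{|s|>1\}$ with measure strictly between $0$ and $K$, which fails when $\{|s|>1\}$ sits inside a single atom of measure exceeding $K$. The remedy is to allow $\tilde A$ of arbitrary finite positive measure and to treat $\mu(\tilde A) > K$ separately via the elementary bound $|v|_K \le K$. The first inclusion and the $L^1$-bound never used atom-freeness and carry over unchanged.
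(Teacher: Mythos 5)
Your proof is correct, but for the second inclusion you take a genuinely different route from the paper. The paper derives $\partial\vert\cdot\vert_K(0)\subseteq\{s\mid \Vert s\Vert_\infty\le1,\ \Vert s\Vert_1\le K\}$ as an immediate corollary of \cref{Theo:GeneralSubdiffExact}: the subdifferential is the weak-* closed convex hull of $\{f\mid \vert f\vert=\chi_A,\ \mu(A)\le K\}$, and since that generating set sits inside the convex, weak-* compact set $\{s\mid \Vert s\Vert_\infty\le1,\ \Vert s\Vert_1\le K\}$ (by the argument of \cref{lem_D_compact}), so does its closed convex hull. You instead give a direct, elementary argument by testing the subgradient inequality with explicit functions, adapting the atom-free proof of \cref{theo:Subdiff} and correctly identifying and repairing the one step that used \cref{Coratomfree}: where the atom-free proof selects a subset of $\{\vert s\vert>1\}$ of measure in $(0,K)$, you allow $\tilde A$ of arbitrary finite positive measure and observe that $\vert v\vert_K\le\min(\mu(\tilde A),K)<(1+\varepsilon)\mu(\tilde A)$ still yields the contradiction. (In fact your case distinction is superfluous, since $\vert v\vert_K\le\Vert v\Vert_1=\mu(\tilde A)$ settles both cases at once, but this is harmless.) The Fatou argument for $\Vert s\Vert_1\le K$ and the first inclusion are verbatim the paper's. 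What each approach buys: the paper's is shorter given that \cref{Theo:GeneralSubdiffExact} is already available, while yours is self-contained and does not rely on the Hahn--Banach separation machinery behind that theorem.
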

\begin{proof}
It holds $\{f \in L^\infty(\Om) \mid \vert f \vert \in C\} \subset \{s \in L^\infty(\Om) \mid \| s \|_\infty \leq 1, \ \| s \|_1 \leq K\}$, where the latter is convex and weak-* compact by \cref{lem_D_compact}.
\cref{Theo:GeneralSubdiffExact} then implies the second inclusion.

Now let $s \in L^\infty(\Om)$ with $\Vert s \Vert_\infty \leq 1$ and $\Vert s \Vert_0 \leq K$.
Then $\mu(\supp(s)) \leq K$ and we get
\[
\int_\Om s \cdot v \dm = \int_{\supp(s)} s \cdot v \dm \leq \int_{\supp(s)} \vert v \vert \dm \leq \vert v \vert_K
\]
for all $v \in L^1(\Om)$, i.e., $s \in \partial \vert \cdot \vert_K(0)$.
\end{proof}

Both inclusions in \cref{Theo:GeneralSubdiff} are not always satisfied with equality, see, e.g., \cref{Ex:SupremDiff}.

\section{DC-Algorithm in Banach Spaces}\label{Section:DCConvergence}

The penalization of the constraint $\|u\|_1 - |u|_K=0$ leads to a DC function.
The minimization of such functions can be attempted using the celebrated DC-Algorithm.
For the convenience of the reader, we provide some important results on the DC-Algorithm in a general Banach space setting.
For further reading on (mostly finite dimensional) DC problems and the DC-Algorithm we refer to \cite{Thi2018} and references therein.
Convergence properties of the DC-Algorithm in Hilbert spaces were proven by \cite{Lim2023}.

Let us consider the general DC problem
\[
\min_{u \in X} f(u) := g(u) - h(u),
\]
where
\begin{itemize}
\item $X$ is a Banach space,
\item $g:X \to \bar\R$ is proper, convex and lower semi-continuous,
\item $h: X \to \R$ is convex and continuous.
\end{itemize}
The basic DC-Algorithm for this problem reads:

\begin{algorithm}[DC-Algorithm]\label{GenDCA}
\begin{itemize}
\item[]
\item[\text{(S.0)}] Choose $u^0 \in X$ and set $k := 0$.
\item[\text{(S.1)}] Choose $s^k \in \partial h(u^k)$ and determine $u^{k+1}$ as a solution to
\begin{equation}
\min_{u \in X} g(u) - h(u^k) - \langle s^k, u - u^k \rangle.
\label{generalDC-subproblem}
\end{equation}
\item[\text{(S.2)}] If a suitable termination criterion is satisfied: STOP.
\item[\text{(S.3)}] Set $k \leftarrow k+1$ and go to (S.1).
\end{itemize}
\end{algorithm}

Therefore, in step (S.1) the function $h$ is approximated by its linearization based on the subgradient $s^k$, and the resulting convex problem \eqref{generalDC-subproblem} is solved.
Note that the convex function $u\mapsto g(u) - h(u^k) - \langle s^k, u - u^k \rangle$ majorizes $f$.
The following necessary optimality conditions can be derived analogously to the finite dimensional setting.

\begin{proposition}[{\cite[Theorem 2]{Tao1997}}] \label{prop:DCnecessary}
Let $u \in X$ be a local minimum of $f=g-h$. Then
\[
\emptyset \neq \partial h(u) \subseteq \partial g(u),
\]
and in particular $\partial g(u) \cap \partial h(u) \neq \emptyset$.
\end{proposition}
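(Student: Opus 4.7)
My plan is to first argue that $\partial h(u)$ is nonempty, then pick an arbitrary $s \in \partial h(u)$ and show $s \in \partial g(u)$, from which both claims follow immediately.

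For the first step, since $h: X \to \R$ is convex and continuous on all of $X$, it is in particular continuous at $u$ with $h(u) < \infty$. A standard consequence of the Hahn--Banach theorem for convex functions (separating the epigraph from the point $(u, h(u) - \varepsilon)$) yields a continuous affine minorant touching $h$ at $u$, i.e.\ $\partial h(u) \neq \emptyset$.

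For the inclusion $\partial h(u) \subseteq \partial g(u)$, fix $s \in \partial h(u)$. Since $u$ is a local minimum of $f = g - h$, there exists $\delta > 0$ such that $g(v) - h(v) \geq g(u) - h(u)$ for all $v \in X$ with $\|v - u\|_X \leq \delta$. Combined with the subgradient inequality $h(v) - h(u) \geq \langle s, v - u \rangle$, this gives
\begin{equation*}
g(v) - g(u) \geq h(v) - h(u) \geq \langle s, v - u \rangle \qquad \forall v \in X \text{ with } \|v-u\|_X \leq \delta.
\end{equation*}
In particular, $g(u)$ must be finite (otherwise $f$ would not admit a finite local-minimum value, using continuity of $h$).

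The key step is to upgrade this local inequality to a global one. Let $v \in X$ be arbitrary; without loss of generality assume $g(v) < \infty$, otherwise the subgradient inequality is trivial. For $t \in (0,1]$ set $v_t := u + t(v-u)$. For sufficiently small $t$ we have $\|v_t - u\|_X \leq \delta$, so by the local inequality and the convexity of $g$,
\begin{equation*}
t \langle s, v - u \rangle = \langle s, v_t - u \rangle \leq g(v_t) - g(u) \leq (1-t) g(u) + t g(v) - g(u) = t \bigl( g(v) - g(u) \bigr).
\end{equation*}
Dividing by $t > 0$ and passing to the limit (or simply using that the inequality holds for any admissible $t$) yields $\langle s, v - u \rangle \leq g(v) - g(u)$ for all $v \in X$, i.e.\ $s \in \partial g(u)$. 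Since $s \in \partial h(u)$ was arbitrary, this proves $\partial h(u) \subseteq \partial g(u)$, and together with $\partial h(u) \neq \emptyset$ we obtain $\partial g(u) \cap \partial h(u) \neq \emptyset$.

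The argument is essentially routine once one identifies the convex-combination trick in the last step as the mechanism that turns the local minimality into a global subgradient inequality; no deeper obstacle arises, because the continuity of $h$ on all of $X$ prevents any domain or subdifferentiability pathologies for $h$.
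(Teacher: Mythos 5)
Your proof is correct. The paper gives no proof of its own here --- it simply cites \cite[Theorem 2]{Tao1997} and remarks that the finite-dimensional argument carries over --- and your argument is exactly that classical one: nonemptiness of $\partial h(u)$ from continuity of the real-valued convex $h$, then the observation that local minimality plus $s \in \partial h(u)$ makes $u$ a local (hence, by convexity and the convex-combination trick, global) minimizer of $v \mapsto g(v) - \langle s, v\rangle$, i.e.\ $s \in \partial g(u)$. (The only implicit hypothesis, which you correctly flag, is $u \in \dom g$; otherwise $\partial g(u) = \emptyset$ and the statement could not hold.)
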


In general, limit points of iterates of \cref{GenDCA} will only satisfy the weaker optimality condition $\partial g(u) \cap \partial h(u) \neq \emptyset$.

\begin{definition}
A point $u \in X$ is called a critical point of $f = g-h$, if
\[
\partial g(u) \cap \partial h(u) \neq \emptyset.
\]
$u$ is called a strongly critical point of $f = g-h$, if
\[
\emptyset \neq \partial h(u) \subseteq \partial g(u).
\]
\end{definition}
A strongly critical point is also often called directional-stationary or d-stationary, see, e.g., \cite{Oliveira2018}.
The following convergence results for \cref{GenDCA} are well-known in finite dimensions, see, e.g., \cite[Theorem 3]{Tao1997}, and directly translate to our Banach space setting.

\begin{lemma}\label{Lem:DCconvergence}
Let $(u^k)$ be a sequence generated by \cref{GenDCA} for $f = g-h$. Then
\begin{enumerate}
\item the sequence $(f(u^k))$ of function values is monotonically decreasing.
\item if $u^{k+1} = u^k$, the current iterate $u^k$ is a critical point of $f = g-h$.
\item if $f(u^k) = f(u^{k+1})$, the current iterate $u^k$ is a critical point of $f = g-h$.
If in addition $g$ or $h$ is strongly convex, it holds $u^{k+1} = u^k$.
\end{enumerate}
\end{lemma}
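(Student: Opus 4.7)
The plan is to drive everything from two inequalities: the subgradient inequality $h(v) \ge h(u^k) + \langle s^k, v - u^k \rangle$ for all $v \in X$, coming from $s^k \in \partial h(u^k)$, and the optimality of $u^{k+1}$ in the convex subproblem \eqref{generalDC-subproblem}, which reads $g(u^{k+1}) - \langle s^k, u^{k+1} - u^k \rangle \le g(v) - \langle s^k, v - u^k \rangle$ for all $v \in X$.

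For part (i), I would specialize the optimality inequality to $v = u^k$ to get $g(u^{k+1}) - g(u^k) \le \langle s^k, u^{k+1} - u^k \rangle$, and specialize the subgradient inequality to $v = u^{k+1}$ to get $\langle s^k, u^{k+1} - u^k \rangle \le h(u^{k+1}) - h(u^k)$. Chaining and rearranging yields $f(u^{k+1}) \le f(u^k)$. For part (ii), the hypothesis $u^{k+1} = u^k$ says that $u^k$ itself minimizes the convex objective $v \mapsto g(v) - \langle s^k, v \rangle$, so the first-order optimality condition gives $s^k \in \partial g(u^k)$; combined with $s^k \in \partial h(u^k)$ this is exactly criticality.

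For part (iii), I would write the non-positive difference $f(u^{k+1}) - f(u^k)$ as the sum of the two non-positive summands used in part (i), namely $\bigl[g(u^{k+1}) - g(u^k) - \langle s^k, u^{k+1} - u^k \rangle\bigr] + \bigl[\langle s^k, u^{k+1} - u^k \rangle - h(u^{k+1}) + h(u^k)\bigr]$. Equality of function values forces both brackets to vanish. Vanishing of the first bracket means that $u^k$ is itself a minimizer of the subproblem, so the argument of part (ii) applies again and gives $s^k \in \partial g(u^k)$, hence criticality. For the additional assertion, if $g$ is strongly convex then the subproblem objective is strongly convex and has a unique minimizer, forcing $u^{k+1} = u^k$; if instead $h$ is strongly convex with modulus $\sigma > 0$, the sharpened subgradient inequality $h(u^{k+1}) \ge h(u^k) + \langle s^k, u^{k+1} - u^k \rangle + \tfrac{\sigma}{2}\|u^{k+1} - u^k\|_X^2$ combined with equality in the second bracket implies $\|u^{k+1} - u^k\|_X = 0$.

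I do not foresee any genuine obstacle: the whole argument is an algebraic rearrangement of the two defining inequalities, and the only point requiring a little care is handling the strong convexity assumption symmetrically in $g$ and $h$, since in the $g$-case uniqueness of the subproblem minimizer is used, whereas in the $h$-case the strengthened subgradient inequality is used.
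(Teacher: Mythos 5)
Your argument is correct and is precisely the standard two-inequality proof (subgradient inequality for $h$ at $u^k$ plus optimality of $u^{k+1}$ in the linearized subproblem) that the paper itself does not spell out but delegates to the cited finite-dimensional result of Tao and An, which translates verbatim to the Banach space setting. The handling of the strong-convexity addendum — uniqueness of the subproblem minimizer when $g$ is strongly convex, the strengthened subgradient inequality when $h$ is — is also the expected route and is sound.
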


Under suitable assumptions, accumulation points of the iterates of the DC-Algorithm are critical points of $f = g-h$.
Note that the assumption of the existence of  strong and weak-* convergent subsequences is quite strong.
We prove that they are fulfilled for some problem classes in \cref{Section:ExampleProblem}.

\begin{theorem}\label{Theo:DCConvergence}
Assume that $f=g-h$ is bounded from below and that at least one of $g$ and $h$ is strongly convex.
Let $(u^k)$ be a sequence generated by the DC-Algorithm, \cref{GenDCA}, with the associated sequence $(s^k)$ of subgradients.

Suppose that there is a subsequence indexed by $(k_n)$ such that $u^{k_n} \to \bar u$ in $X$ and
$s^{k_n}\rightharpoonup^* \bar s$ in $X'$.
Then  $\bar s \in \partial g(\bar u) \cap \partial h(\bar u)$, i.e., $\bar u$ is a critical point of $f=g-h$.
\end{theorem}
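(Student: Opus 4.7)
The plan is to establish the two inclusions $\bar s \in \partial g(\bar u)$ and $\bar s \in \partial h(\bar u)$ separately, exploiting the two subgradient relations built into the algorithm: by construction $s^k \in \partial h(u^k)$, and by the first-order optimality condition for the convex subproblem \eqref{generalDC-subproblem} one has $s^k \in \partial g(u^{k+1})$.

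For the inclusion $\bar s \in \partial h(\bar u)$, I would simply pass to the limit in
\[
h(v) - h(u^{k_n}) \ge \langle s^{k_n}, v - u^{k_n}\rangle \qquad \forall v \in X.
\]
The left-hand side converges to $h(v) - h(\bar u)$ by continuity of $h$. Splitting the right-hand side as $\langle s^{k_n}, v\rangle - \langle s^{k_n}, u^{k_n} - \bar u\rangle - \langle s^{k_n}, \bar u\rangle$, the first and third terms converge by weak-* convergence of $(s^{k_n})$, while the middle term tends to zero because $(s^{k_n})$ is norm-bounded (as a weakly-* convergent sequence) and $u^{k_n} \to \bar u$ strongly.

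The inclusion $\bar s \in \partial g(\bar u)$ is more delicate because the natural subgradient inequality for $g$ involves $u^{k_n+1}$, not $u^{k_n}$. The key preliminary step is to show $u^{k_n+1} \to \bar u$ as well. Here I would use \cref{Lem:DCconvergence}(1) together with the assumed lower bound on $f$ to conclude $f(u^k) - f(u^{k+1}) \to 0$. Combining this with the strong convexity of $g$ or $h$ with modulus $m>0$, and chaining the two subgradient inequalities (i.e.\ $s^k \in \partial g(u^{k+1})$ tested at $u^k$ together with $s^k \in \partial h(u^k)$ tested at $u^{k+1}$), a standard telescoping calculation gives
\[
f(u^k) - f(u^{k+1}) \ge \tfrac{m}{2}\|u^{k+1} - u^k\|_X^2,
\]
so $\|u^{k_n+1} - u^{k_n}\|_X \to 0$ and hence $u^{k_n+1} \to \bar u$ in $X$. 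With this in hand, I pass to the limit in
\[
g(v) - g(u^{k_n+1}) \ge \langle s^{k_n}, v - u^{k_n+1}\rangle \qquad \forall v \in X,
\]
using lower semicontinuity of $g$ to estimate $g(\bar u) \le \liminf_n g(u^{k_n+1})$ and the same ``bounded times strongly convergent'' argument as before to evaluate the limit of the inner product as $\langle \bar s, v - \bar u\rangle$. This yields $g(v) - g(\bar u) \ge \langle \bar s, v - \bar u\rangle$ for every $v \in X$, i.e.\ $\bar s \in \partial g(\bar u)$.

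The main obstacle is step two, getting $u^{k_n+1} \to \bar u$; this is precisely why strong convexity of $g$ or $h$ is assumed, since otherwise the decrease $f(u^k) - f(u^{k+1}) \to 0$ gives no quantitative control on $\|u^{k+1} - u^k\|_X$, and one could not align the two subgradient inequalities at the same limit point.
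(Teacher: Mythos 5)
Your argument is correct and is exactly the standard DC-convergence proof that the paper delegates to \cite[Theorem 2.11]{Lim2023}; in particular, your telescoping estimate $f(u^k)-f(u^{k+1})\ge \tfrac{m}{2}\|u^{k+1}-u^k\|_X^2$ obtained by chaining $s^k\in\partial h(u^k)$ with $s^k\in\partial g(u^{k+1})$ is precisely the inequality the paper itself writes out in its proof of the inexact variant. The remaining limit passages (continuity of $h$, lower semicontinuity of $g$, boundedness of a weak-* convergent sequence paired with strong convergence of the iterates) are all handled correctly, so there is nothing to add.
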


\begin{proof}
This result is proven in \cite[Theorem 2.11]{Lim2023} for Hilbert spaces and can be easily generalized to Banach spaces using similar reasoning.
\end{proof}

In the following we show that for similar convergence results it suffices to solve the subproblem \eqref{generalDC-subproblem} only approximately.

\begin{algorithm}[Inexact DC-Algorithm]\label{DCA_inexact}
\begin{itemize}
\item[]
\item[\text{(S.0)}] Choose $u^0 \in X$ and set $k := 0$.
\item[\text{(S.1)}] Choose $s^k \in \partial h(u^k)$ and let $u^{k+1}$ be an $\varepsilon^k$-stationary point for the convex problem
\begin{align}
\min_{u \in X} g(u) - \langle s^k, u \rangle,
\end{align}
i.e., it holds $s^k + \varepsilon^k \in \partial g(u^{k+1})$.
\item[\text{(S.2)}] If a suitable termination criterion is satisfied: STOP.
\item[\text{(S.3)}] Set $k \leftarrow k+1$ and go to (S.1).
\end{itemize}
\end{algorithm}

Usually the inexactness in the DC-Algorithm is incorporated via the $\varepsilon$-subdifferential, see \cite{Oliveira2018,Thi2018}.
In our formulation $\varepsilon^k$ plays the role of a residuum to the necessary and sufficient condition $0 \in \partial g(u^{k+1}) - s^k$.
The definition of $\varepsilon^k$  implies
\[
\dist\left(0, \partial g(u^{k+1}) - s^k \right) \leq \Vert \varepsilon^k \Vert.
\]
Like in most inexact DC-Algorithms there is no guarantee that the subproblem can be solved in a finite number of iterations for a prescribed maximal value of $\Vert \varepsilon^k \Vert$.
For a detailed discussion see for example \cite{Zhang2023}.

We have the following inexact convergence result.

\begin{theorem}[Inexact DC]
Assume that $f=g-h$ is bounded from below and that at least one of $g$ and $h$ is strongly convex.
Let $(u^k)$ be a sequence generated by the inexact DC-Algorithm, \cref{DCA_inexact}, with the associated sequences $(s^k)$ and $(\varepsilon^k)$.

Suppose that there is a subsequence indexed by $(k_n)$ such that $u^{k_n} \to \bar u$ in $X$ and
$s^{k_n}\rightharpoonup^* \bar s$ in $X'$. Suppose in addition
\[
\sum_{k=1}^\infty \Vert \varepsilon^k \Vert_{X'}^2 < \infty.
\]
Then  $\bar s \in \partial g(\bar u) \cap \partial h(\bar u)$, i.e., $\bar u$ is a critical point of $f=g-h$.
\end{theorem}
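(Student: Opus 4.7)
The plan is to mimic the exact-DC proof sketched in \cite{Lim2023}, but carefully track the residual $\varepsilon^k$ and use the hypothesis $\sum_k \|\varepsilon^k\|_{X'}^2 < \infty$ to absorb the error via Young's inequality. Without loss of generality assume $g$ is strongly convex with modulus $\sigma > 0$ (the case of strongly convex $h$ is analogous).

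First I would set up the descent inequality. From $s^k \in \partial h(u^k)$ one has $h(u^{k+1}) \geq h(u^k) + \langle s^k, u^{k+1} - u^k\rangle$, while strong convexity of $g$ combined with $s^k + \varepsilon^k \in \partial g(u^{k+1})$ gives
\[
g(u^k) \geq g(u^{k+1}) + \langle s^k + \varepsilon^k, u^k - u^{k+1}\rangle + \tfrac{\sigma}{2}\|u^{k+1}-u^k\|_X^2.
\]
Subtracting and applying Young's inequality $\langle \varepsilon^k, u^k-u^{k+1}\rangle \geq -\tfrac{1}{\sigma}\|\varepsilon^k\|_{X'}^2 - \tfrac{\sigma}{4}\|u^{k+1}-u^k\|_X^2$ yields the key estimate
\[
f(u^{k+1}) \leq f(u^k) - \tfrac{\sigma}{4}\|u^{k+1}-u^k\|_X^2 + \tfrac{1}{\sigma}\|\varepsilon^k\|_{X'}^2.
\]

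Second, I would telescope this inequality. Since $f$ is bounded from below and $\sum_k \|\varepsilon^k\|_{X'}^2 < \infty$, summing gives $\sum_k \|u^{k+1}-u^k\|_X^2 < \infty$, so in particular $\|u^{k+1}-u^k\|_X \to 0$. Consequently, along the chosen subsequence $u^{k_n+1} \to \bar u$ strongly in $X$ as well. Moreover, square-summability implies $\|\varepsilon^{k_n}\|_{X'} \to 0$, and Banach-Steinhaus ensures that $(s^{k_n})$ is bounded in $X'$.

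Third, I would pass to the limit in the two subgradient inequalities along $(k_n)$. For any fixed $v \in X$ the inequality $h(v) \geq h(u^{k_n}) + \langle s^{k_n}, v - u^{k_n}\rangle$ converges term by term: $h(u^{k_n}) \to h(\bar u)$ by continuity, $\langle s^{k_n}, v\rangle \to \langle \bar s, v\rangle$ by weak-*, and $\langle s^{k_n}, u^{k_n}\rangle \to \langle \bar s, \bar u\rangle$ by splitting $\langle s^{k_n}, u^{k_n}\rangle = \langle s^{k_n}, u^{k_n}-\bar u\rangle + \langle s^{k_n}, \bar u\rangle$ and combining boundedness of $(s^{k_n})$ with $u^{k_n}\to\bar u$. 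This gives $\bar s \in \partial h(\bar u)$. For $g$, starting from $g(v) \geq g(u^{k_n+1}) + \langle s^{k_n}+\varepsilon^{k_n}, v-u^{k_n+1}\rangle$, the same argument applies to the linear term (using $\varepsilon^{k_n}\to 0$ in $X'$), while lower semicontinuity yields $\liminf_n g(u^{k_n+1}) \geq g(\bar u)$; taking $\liminf$ of the right-hand side gives $g(v) \geq g(\bar u) + \langle \bar s, v-\bar u\rangle$, i.e., $\bar s \in \partial g(\bar u)$.

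The main obstacle is the descent estimate: one must absorb the inexactness into the strong-convexity term, and the square-summability assumption is precisely what makes this work when combined with Young's inequality. Everything downstream (in particular the crucial upgrade $u^{k_n+1}\to \bar u$ strongly) flows from this estimate and is essentially routine once the energy argument is in place.
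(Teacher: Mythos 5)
Your proof is correct and follows essentially the same route as the paper's: the identical descent estimate $\frac{\sigma}{2}\|u^{k+1}-u^k\|_X^2 \le f(u^k)-f(u^{k+1})+\langle\varepsilon^k,u^{k+1}-u^k\rangle$, the same Young's-inequality absorption and telescoping to obtain $\sum_k\|u^{k+1}-u^k\|_X^2<\infty$ and hence $u^{k_n+1}\to\bar u$, followed by passing to the limit in the two subgradient inclusions (a step the paper delegates to the cited reference but you write out explicitly, correctly using boundedness of $(s^{k_n})$ and lower semicontinuity of $g$). The only trivial point left implicit is that $f(u^1)<\infty$, needed to start the telescoping, which the paper notes via properness of $g$ and $\partial g(u^1)\neq\emptyset$.
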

\begin{proof}
From $s^k + \varepsilon^k \in \partial g(u^{k+1})$, and the uniform convexity of $g$ or $h$ we get
\begin{align*}
\frac{\alpha}{2} \Vert u^{k+1}-u^k \Vert_X^2 &\leq f(u^k) - f(u^{k+1}) + \langle \varepsilon^k, u^{k+1} - u^k \rangle \\
&\leq f(u^k) - f(u^{k+1}) + \frac1\alpha \Vert \varepsilon^k \Vert_{X'}^2 + \frac{\alpha}{4} \Vert u^{k+1}-u^k \Vert_X^2
\end{align*}
for some $\alpha>0$.
Since $g$ is proper and $\partial g(u^1) \ne \emptyset$ it follows $f(u^1) < \infty$.
As $f$ is bounded from below by some $M \in \R$, we get after summing the above inequality
\[
\frac{\alpha}{4} \sum_{k=1}^\infty\Vert u^{k+1}-u^k \Vert_X^2 \le f(u^1) - M + \frac1\alpha  \sum_{k=1}^\infty \Vert \varepsilon^k \Vert_{X'}^2 < +\infty.
\]
This implies $u^{k_n+1}\to \bar u$ in $X$.
As in the proof of \cite[Theorem 2.11]{Lim2023}, we can pass to the limit in the inclusions
$s^{k} + \varepsilon^{k} \in \partial g(u^{k+1})$ and $s^{k} \in \partial h(u^{k})$
along the subsequence $(k_n)$
to obtain $\bar s \in \partial g(\bar u) \cap \partial h(\bar u)$.
\end{proof}

\section{DC Reformulation in Function Space} \label{Section:PenalizedProblem}

In the following we now apply the results on the largest-$K$-norm to the general $L^0$-constrained problem, prove solvability and study the penalized problem.
For this purpose let $(\Om, \mathcal{A}, \mu)$ be again a $\sigma$-finite measure space.
We consider the general problem
\begin{align}\label{MinProblem}
\min_{u \in X} f(u) \quad \text{subject to} \quad \Vert u \Vert_0 \leq K
\end{align}
with $K \in (0, \mu(\Om))$ under the assumption that
\begin{itemize}
\item $X$ is a reflexive Banach space that is compactly embedded in $L^1(\Om)$,
\item $f: X \to \overline{\R}$ is proper, lower semicontinuous, convex, bounded from below and coercive, and
\item there is $u_0 \in X$ such that $f(u_0) < \infty$ and $\|u_0\|_0 \le K$.
\end{itemize}
By \cref{Theo:ReformSepa} problem \eqref{MinProblem} is equivalent to
\[
\min_{u \in X} f(u) \quad \text{subject to} \quad \Vert u \Vert_1 - \vert u \vert_K = 0.
\]
In the following, we want to study the associated penalized problem
\begin{align} \label{penMinProblem}
\min_{u \in X} f(u) + \rho(\Vert u \Vert_1 - \vert u \vert_K), \qquad \rho > 0,
\end{align}
where $\rho$ is a penalty parameter.
Note that this problem is a DC problem.

\subsection{A Penalty Method}

Let us define for every $u \in X$ the auxiliary functions
\begin{align*}
P(u) &:= \Vert u \Vert_1 - \vert u \vert_K, \\
f_{\rho}(u) &:= f(u) + \rho(\Vert u \Vert_1 - \vert u \vert_K) = f(u) + \rho P(u).
\end{align*}

\begin{lemma} \label{hcontinuous}
For every $K \in (0, \mu(\Om))$ the functions $\Vert \cdot \Vert_1$, $\vert \cdot \vert_K$, and $P$ are weakly continuous on $X$.
\end{lemma}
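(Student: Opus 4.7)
The plan is to exploit the compact embedding $X \hookrightarrow L^1(\Omega)$ to upgrade weak convergence in $X$ to strong convergence in $L^1(\Omega)$, and then rely on the continuity of $\|\cdot\|_1$ and $|\cdot|_K$ on $L^1(\Omega)$ that was already established.

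First I would take an arbitrary sequence $(u_n) \subset X$ with $u_n \rightharpoonup u$ in $X$ and argue that $u_n \to u$ strongly in $L^1(\Omega)$. Weak convergence yields boundedness of $(u_n)$ in $X$, so the compact embedding guarantees that every subsequence has a further subsequence converging strongly in $L^1(\Omega)$ to some limit. Because the embedding $X \hookrightarrow L^1(\Omega)$ is continuous and linear, it is also weak-to-weak continuous, hence $u_n \rightharpoonup u$ in $L^1(\Omega)$. Uniqueness of weak limits in $L^1(\Omega)$ identifies the strong $L^1$-limit of each convergent subsequence as $u$. By a standard subsequence argument, the whole sequence $(u_n)$ converges to $u$ in $L^1(\Omega)$.

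Next I would pass through the continuity of the two functionals on $L^1(\Omega)$. The reverse triangle inequality gives $|\|u_n\|_1 - \|u\|_1| \le \|u_n - u\|_1 \to 0$, and \cref{Lem:Cont} yields $||u_n|_K - |u|_K| \le \|u_n - u\|_1 \to 0$. Therefore $\|u_n\|_1 \to \|u\|_1$ and $|u_n|_K \to |u|_K$, so the functions $\|\cdot\|_1$ and $|\cdot|_K$ are weakly continuous on $X$. Weak continuity of $P = \|\cdot\|_1 - |\cdot|_K$ then follows immediately as the difference of two weakly continuous functions.

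I do not expect any genuine obstacle here: the only subtlety is the routine passage from ``subsequential'' strong $L^1$-convergence (given by the compact embedding) to convergence of the full sequence, which is handled by the uniqueness-of-weak-limit argument above.
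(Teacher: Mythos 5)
Your proposal is correct and follows essentially the same route as the paper: the paper's one-line proof simply invokes the compact embedding $X \hookrightarrow L^1(\Om)$ together with \cref{Lem:Cont}, and your argument is the standard expansion of that reasoning (boundedness, subsequence extraction, identification of the limit, convergence of the full sequence). Nothing is missing.
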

\begin{proof}
Since $X$ is compactly embedded in $L^1(\Om)$, this is a direct consequence of the continuity of all the functions with respect to the $L^1(\Om)$-norm by
\cref{Lem:Cont}.
\end{proof}

The weak continuity implies the global solvability of the original problem \eqref{MinProblem} and the penalized problem \eqref{penMinProblem}.

\begin{theorem} \label{TheoremMinGlobalSolution}
The $L^0$-constrained problem \eqref{MinProblem} has a global solution.
\end{theorem}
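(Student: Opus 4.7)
The plan is the standard direct method of the calculus of variations, exploiting the reformulation of the $L^0$-constraint via the largest-$K$-norm to handle the non-convex feasibility set.

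First I would observe that the problem is not vacuous: the assumption on the existence of $u_0 \in X$ with $f(u_0) < \infty$ and $\|u_0\|_0 \le K$ guarantees that the infimum $m := \inf\{f(u) \mid u \in X,\; \|u\|_0 \le K\}$ lies in $[\inf_X f,\, f(u_0)]$, hence is finite since $f$ is bounded from below. Take a minimizing sequence $(u_n) \subset X$ with $\|u_n\|_0 \le K$ and $f(u_n) \to m$.

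Next I would use coercivity of $f$ to conclude that $(u_n)$ is bounded in $X$: if $\|u_n\|_X \to \infty$ along a subsequence, then $f(u_n) \to \infty$, contradicting convergence to the finite $m$. Since $X$ is reflexive, we may extract a subsequence (not relabelled) such that $u_n \rightharpoonup \bar u$ in $X$. By the assumed compact embedding $X \hookrightarrow L^1(\Om)$, it follows that $u_n \to \bar u$ in $L^1(\Om)$.

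The remaining task is to verify that $\bar u$ is feasible and minimizing. For the objective: $f$ is convex and lower semicontinuous on $X$, hence weakly lower semicontinuous, so
\[
f(\bar u) \le \liminf_{n\to\infty} f(u_n) = m.
\]
For the constraint, I would invoke \cref{Theo:ReformSepa} to rewrite $\|u_n\|_0 \le K$ as $P(u_n) = \|u_n\|_1 - |u_n|_K = 0$. By \cref{hcontinuous}, $P$ is weakly continuous on $X$, so $P(\bar u) = \lim_{n\to\infty} P(u_n) = 0$, and another application of \cref{Theo:ReformSepa} yields $\|\bar u\|_0 \le K$. Thus $\bar u$ is feasible, and combined with $f(\bar u) \le m$ this shows $\bar u$ is a global minimizer.

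The only subtle point is passing to the limit in the constraint, and that subtlety has already been absorbed into the earlier results: \cref{Theo:ReformSepa} converts the lower-semicontinuity-unfriendly functional $\|\cdot\|_0$ into the continuous functional $P$, and the compact embedding $X \hookrightarrow L^1(\Om)$ together with \cref{Lem:Cont} gives the weak continuity of $P$ on $X$ via \cref{hcontinuous}. Without these ingredients the argument would fail, as $\|\cdot\|_0$ is not weakly lower semicontinuous on $L^p$-spaces in general.
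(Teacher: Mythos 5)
Your proposal is correct and follows essentially the same route as the paper's proof: the direct method with coercivity and reflexivity yielding a weakly convergent minimizing subsequence, weak lower semicontinuity of $f$ from convexity, and feasibility of the limit obtained by rewriting the constraint as $P(u)=0$ via \cref{Theo:ReformSepa} and invoking the weak continuity of $P$ from \cref{hcontinuous}. No gaps.
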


\begin{proof}
The global solvability follows by standard arguments:
Let $M := \{u \in X \mid \Vert u \Vert_0 \leq K \}$ be the feasible set of problem \eqref{MinProblem},
which is non-empty.
Let $(u_n) \subset M$ be a minimizing sequence, i.e., $\lim_{n \to \infty} f(u_n) = \inf_{u \in M} f(u)$.
Due to the assumptions on $f$, it follows $ \inf_{u \in M} f(u) \in \R$.
Since $f$ is bounded from below, it follows from the coercivity of $f$ that $(u_n)$ is bounded in $X$.
Since $X$ is reflexive, we can extract a weakly convergent subsequence $(u_{n_k})$ with $u_{n_k} \overset{X}{\rightharpoonup} u^*$ for some $u^* \in X$.
As $f$ is proper, convex and lower semicontinuous, and therefore weakly lower semicontinuous, we get
\[
f(u^*) \leq \liminf_{k \to \infty} f(u_{n_k}) = \lim_{k \to \infty} f(u_{n_k}) = \inf_{u \in M} f(u).
\]
By \cref{Theo:ReformSepa}
it holds $M = \{ u \in X \mid P(u) = 0\}$.
The weak continuity of $P$ by \cref{hcontinuous} implies $u^* \in M$, and $u^*$ solves \eqref{MinProblem}.
\end{proof}

\begin{lemma} \label{LemmaPenProblemSol}
For every $\rho > 0$ the penalized problem \eqref{penMinProblem} has a global solution.
\end{lemma}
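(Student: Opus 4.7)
The plan is to apply the direct method of the calculus of variations to $f_\rho$, exactly as in the proof of \cref{TheoremMinGlobalSolution}, using the weak continuity of $P$ provided by \cref{hcontinuous} to handle the non-convex penalty term.

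First I would check that $f_\rho$ is bounded below and has at least one point of finite value. Since $|u|_K \le |u|_{\mu(\Om)} = \|u\|_1$ for every $u \in L^1(\Om)$ by monotonicity of the largest-$K$-norm in $K$, the penalty satisfies $P(u) \ge 0$; hence $f_\rho(u) \ge f(u)$ and $f_\rho$ inherits the lower bound of $f$. For the admissible point $u_0 \in X$ given by the standing assumption with $f(u_0) < \infty$ and $\|u_0\|_0 \le K$, \cref{Theo:ReformSepa} yields $P(u_0) = 0$, so that $f_\rho(u_0) = f(u_0) < \infty$. Consequently, $\inf_{u \in X} f_\rho(u) \in \R$, and a minimizing sequence $(u_n) \subset X$ exists.

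Next I would extract a weak cluster point. Because $P \ge 0$, the sequence $(f(u_n))$ is bounded above by $\sup_n f_\rho(u_n) < \infty$, and, since $f$ is bounded below and coercive, $(u_n)$ is bounded in $X$. Using that $X$ is reflexive, I pass to a subsequence, still denoted $(u_n)$, with $u_n \rightharpoonup u^* \in X$.

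Finally I would pass to the limit. The functional $f$ is convex and lower semicontinuous, hence weakly lower semicontinuous on $X$, and $P$ is weakly continuous by \cref{hcontinuous}. Therefore
\[
f_\rho(u^*) \le \liminf_{n \to \infty} f(u_n) + \rho \lim_{n \to \infty} P(u_n) = \lim_{n \to \infty} f_\rho(u_n) = \inf_{u \in X} f_\rho(u),
\]
which shows that $u^*$ is a global minimizer. No step poses a genuine obstacle; the only point that required the preparatory work of \cref{Section:LargestKNorm} is the weak continuity of the DC penalty $P = \|\cdot\|_1 - |\cdot|_K$, which fails on $L^1(\Om)$ itself and relies essentially on the compact embedding $X \hookrightarrow L^1(\Om)$.
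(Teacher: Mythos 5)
Your argument is correct and is precisely what the paper intends: its proof of \cref{LemmaPenProblemSol} simply states that one argues as in \cref{TheoremMinGlobalSolution}, i.e., the direct method using coercivity and weak lower semicontinuity of $f$ together with the weak continuity and non-negativity of $P$ from \cref{hcontinuous}. Your write-up fills in exactly those details (including the finiteness of the infimum via $u_0$), so there is nothing to add.
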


\begin{proof}
The proof can be carried out using similar arguments as that of \cref{TheoremMinGlobalSolution}.
\end{proof}

For the corresponding penalty method, we have the following result.

\begin{theorem} \label{GenTheorem2}
Let $(\rho_n) > 0$ be monotonically increasing  with $\lim_{n \to \infty} \rho_n = \infty$. Let $u_n$ be a global minimum of the penalized problem \eqref{penMinProblem} with $\rho = \rho_n$ for all $n \in \N$.
Then the sequence $(u_n)$ satisfies
\begin{enumerate}
\item \label{pen_1} $f_{\rho_n}(u_n) \leq f_{\rho_{n+1}}(u_{n+1})$ for all $n \in \N$,
\item \label{pen_2} $P(u_n) \geq P(u_{n+1})$ for all $n \in \N$,
\item \label{pen_3} $f(u_n) \leq f(u_{n+1})$ for all $n \in \N$,
\item \label{pen_4} $\lim_{n \to \infty} P(u_n) = 0$,
\item \label{pen_5} $(u_n)$ is bounded in $X$,
\item \label{pen_6} every weak accumulation point $\bar u$ of the sequence $(u_n)$ is a global solution of \eqref{MinProblem}.
\end{enumerate}
\end{theorem}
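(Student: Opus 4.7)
The plan is to exploit, for each $n$, the two inequalities that follow from $u_n$ and $u_{n+1}$ being global minimizers of $f_{\rho_n}$ and $f_{\rho_{n+1}}$ respectively:
\[
f(u_n) + \rho_n P(u_n) \le f(u_{n+1}) + \rho_n P(u_{n+1}),
\]
\[
f(u_{n+1}) + \rho_{n+1} P(u_{n+1}) \le f(u_n) + \rho_{n+1} P(u_n).
\]
Using that $P \ge 0$ (since $|u|_K \le \|u\|_1$) and $\rho_n \le \rho_{n+1}$, item (1) follows from the first inequality by adding the nonnegative term $(\rho_{n+1}-\rho_n)P(u_{n+1})$ on the right. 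Adding both inequalities produces $(\rho_{n+1}-\rho_n)(P(u_{n+1})-P(u_n)) \le 0$, which is (2). Substituting (2) back into the first inequality yields (3).

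For (4) and (5) I would use the admissible point $u_0$ from the standing assumptions. Since $\|u_0\|_0 \le K$, \cref{Theo:ReformSepa} gives $P(u_0)=0$, so the minimizing property of $u_n$ yields
\[
f(u_n) + \rho_n P(u_n) \le f_{\rho_n}(u_0) = f(u_0) < \infty.
\]
Because $f$ is bounded from below by some $M \in \R$, rearranging gives $0 \le P(u_n) \le \rho_n^{-1}(f(u_0)-M)$, and (4) follows from $\rho_n \to \infty$. The same estimate also delivers $f(u_n) \le f(u_0)$ uniformly in $n$, and the coercivity of $f$ combined with this upper bound establishes the $X$-boundedness claimed in (5).

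For (6), let $u_{n_k} \rightharpoonup \bar u$ in $X$ be a weak accumulation point. The decisive structural ingredient is \cref{hcontinuous}: because $X$ embeds compactly into $L^1(\Om)$, the penalty $P$ is weakly continuous on $X$, so $P(\bar u) = \lim_k P(u_{n_k}) = 0$ by (4), and $\bar u$ is feasible for \eqref{MinProblem} by \cref{Theo:ReformSepa}. For any competitor $v$ with $P(v)=0$, minimality of $u_n$ gives
\[
f(u_n) \le f_{\rho_n}(u_n) \le f_{\rho_n}(v) = f(v),
\]
and weak lower semicontinuity of the convex function $f$ then passes this estimate to $f(\bar u) \le \liminf_k f(u_{n_k}) \le f(v)$, proving that $\bar u$ solves \eqref{MinProblem}. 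The argument is largely routine; the only genuinely nontrivial input is that feasibility is preserved in the weak limit, which is precisely what the compact embedding buys us via \cref{hcontinuous}.
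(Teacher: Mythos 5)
Your proposal is correct and follows essentially the same route as the paper: items (1)--(3) via the standard pair of minimality inequalities (which the paper simply delegates to a textbook reference), items (4)--(5) by comparing with the feasible point $u_0$ and using boundedness from below plus coercivity, and item (6) by combining the weak continuity of $P$ from \cref{hcontinuous} with the weak lower semicontinuity of $f$. No gaps.
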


\begin{proof}
The statements (\ref{pen_1})--(\ref{pen_3}) can be proven analogously to the proof for a finite dimensional penalty method, see, e.g., \cite[Satz 5.6]{Kanzow2002}.

(\ref{pen_4}) (\ref{pen_5})  By assumption, there is $u_0 \in X$ with $P(u_0)=0$ and $f(u_0)<\infty$.
Let $v \in X$ be a feasible point for \eqref{MinProblem}, which exists by assumption.
Since $u_n$ is a global minimum of $f_{\rho_n}$, we get
\[
f_{\rho_n}(u_n)   = f(u_n) + \rho_n P(u_n) \le f(v).
\]
Since $f$ is bounded from below and $P$ is non-negative, choosing $v = u_0$ implies $P(u_n) \to 0$ for $n\to\infty$.
The coercivity of $f$ implies boundedness of $(u_n)$ in $X$.

(\ref{pen_6})
Let $(u_{n_k})$ be a subsequence such that $u_{n_k} \overset{X}{\rightharpoonup} \bar u$.
Since $f$ is weakly lower semicontinuous, and $P$ is weakly continuous by \cref{hcontinuous}, it follows $f(\bar u) \leq \liminf_{k \to \infty} f(u_{n_k})$
and $P(\bar u) = \lim_{k \to \infty} P(u_{n_k})=0$.
Hence, $\bar u$ is feasible for \eqref{MinProblem}.
The inequality above implies $f(u_{n_k}) \le f(v)$.
Passing to the limit proves $f(\bar u) \le f(v)$
for all feasible $v \in X$, and $\bar u$ is a solution of  \eqref{MinProblem}.
\end{proof}

\subsection{Optimality Conditions}

In the convergence result for the penalty method \cref{GenTheorem2}, we assume that we have access to a sequence of global solutions of the penalized problem \eqref{penMinProblem}.
However, the DC-Algorithm in general does not deliver global solutions but only critical points, see \cref{Theo:DCConvergence}.
Therefore, we want to consider sequences of critical points of the penalized problem and
study their limiting behavior.

For this purpose, we first investigate optimality conditions of the penalized problem \eqref{penMinProblem}
under the additional assumptions that
\begin{itemize}
\item $X$ is dense in $L^1(\Omega)$, and
\item $f: X \to \R$ is Gateaux-differentiable.
\end{itemize}

The following auxiliary result shows that the subdifferential does not change if a dense subspace of $L^1(\Om)$ is considered.
This particularly applies to the largest-$K$-norm.

\begin{theorem}\label{theo:Subdiff0}
Let $f: L^1(\Om) \to \R$ be convex and continuous and $X \subseteq L^1(\Om)$ a dense subspace. Then it holds
\[
\partial^X f(0) = \partial^{L^1} f(0).
\]
\end{theorem}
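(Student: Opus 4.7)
The plan is to interpret the equality via the natural restriction/extension identification: since $X$ is dense in $L^1(\Om)$ and (by the assumptions of the section) continuously embedded in $L^1(\Om)$, the restriction map $(L^1(\Om))' \to X'$, $s \mapsto s|_X$, is injective and allows us to view $\partial^{L^1} f(0)$ as a subset of $X'$. I would then prove the two inclusions.

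The inclusion $\partial^{L^1} f(0) \subseteq \partial^X f(0)$ is essentially tautological: if $s \in (L^1(\Om))'$ satisfies $f(v) - f(0) \ge \langle s, v \rangle$ for all $v \in L^1(\Om)$, the inequality holds in particular for all $v \in X$, and the restriction $s|_X$ is automatically an element of $X'$ because the inclusion $X \hookrightarrow L^1(\Om)$ is continuous.

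For the reverse inclusion $\partial^X f(0) \subseteq \partial^{L^1} f(0)$, the crux is to show that every $s \in \partial^X f(0)$ is continuous with respect to the $L^1$-norm, so it extends uniquely by density. Given $s \in \partial^X f(0)$, applying the subgradient inequality to both $v$ and $-v$ in $X$ yields
\[
|\langle s, v \rangle_{X',X}| \le \max(f(v), f(-v)) - f(0).
\]
Since $f: L^1(\Om) \to \R$ is convex and continuous, it is bounded on some $L^1$-ball: there exist $\delta > 0$ and $C > 0$ such that $|f(w) - f(0)| \le C$ for every $w \in L^1(\Om)$ with $\|w\|_1 \le \delta$. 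Hence $|\langle s, v\rangle| \le C$ for every $v \in X$ with $\|v\|_1 \le \delta$, and by homogeneity $s$ is $\|\cdot\|_1$-bounded on $X$. By the density of $X$ in $L^1(\Om)$, $s$ extends uniquely to some $\tilde s \in (L^1(\Om))'$.

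To finish, I would verify $\tilde s \in \partial^{L^1} f(0)$: for any $v \in L^1(\Om)$ take a sequence $(v_n) \subset X$ with $v_n \to v$ in $L^1(\Om)$; then $\langle s, v_n \rangle \to \langle \tilde s, v \rangle$ by construction of $\tilde s$, and $f(v_n) \to f(v)$ by continuity of $f$, so passing to the limit in $f(v_n) - f(0) \ge \langle s, v_n \rangle$ gives the required inequality. The only real step requiring thought is the $\|\cdot\|_1$-boundedness of $s$ on $X$, but this follows cleanly from the trick of using both $v$ and $-v$ together with the local boundedness of the continuous convex function $f$ near $0$.
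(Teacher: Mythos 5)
Your proof is correct and follows essentially the same route as the paper: the inclusion $\partial^{L^1} f(0) \subseteq \partial^X f(0)$ by restriction, and the converse by extending $s$ from the dense subspace $X$ to $L^1(\Om)$ and passing to the limit along an approximating sequence using the continuity of $f$. In fact you are slightly more careful than the paper, which asserts the unique extension directly from density; your argument via $|\langle s, v \rangle| \le \max(f(v), f(-v)) - f(0)$ and the local boundedness of the continuous convex function $f$ supplies the $\Vert \cdot \Vert_1$-boundedness of $s$ on $X$ that this extension actually requires.
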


\begin{proof}
The inclusion $\partial^{L^1} f(0) \subset \partial^X f(0)$ follows directly from the fact that $X \subset L^1(\Om)$.
Now let $s \in \partial^X f(0)$, i.e., $s$ is linear and continuous on $X$, and
\[
\langle s, v \rangle \leq f(v) \quad \forall v \in X.
\]
Because of the density of $X$ in $L^1(\Omega)$, we can uniquely extend $s$ to $L^1(\Om)$.
Moreover, there exists for every $u \in L^1(\Om)$ a sequence $(u_n) \subset X$ with $\Vert u_n - u \Vert_1 \to 0$.
It follows
\[
\langle s, u \rangle = \lim_{n \to \infty} \langle s, u_n \rangle \leq \lim_{n \to \infty} f(u_n) = f(u),
\]
which shows $s \in \partial^{L^1} f(0)$.
\end{proof}

Since $X \subseteq L^1(\Om)$ is dense, we know by \cref{theo:Subdiff0} that $\partial^X \vert \cdot \vert_K(u^*) =  \partial^{L^1} \vert \cdot \vert_K(u^*)$ and
$\partial^X \Vert \cdot \Vert_1(u^*) = \partial^{L^1} \Vert \cdot \Vert_1(u^*)$.
Therefore, we simply write $\partial \vert \cdot \vert_K(u^*)$ and $\partial \Vert \cdot \Vert_1(u^*)$ for these subdifferentials.

We consider the DC-decomposition of problem \eqref{penMinProblem} into $f + \rho \Vert \cdot \Vert_1$ and $\rho \vert \cdot \vert_K$.
Strongly critical points give rise to the following necessary optimality condition.

\begin{corollary}\label{Theo:DCnecessary}
Let $u^*$ be a local minimum or a strongly critical point of problem \eqref{penMinProblem}.
Then for every $s \in \partial \vert \cdot \vert_K(u^*)$ there exists $r \in \partial \Vert \cdot \Vert_1(u^*)$ such that
\[
\rho \cdot \langle s,v\rangle = \langle f'(u^*), v \rangle + \rho \cdot \langle r,v\rangle
\]
for every $v \in X$.
\end{corollary}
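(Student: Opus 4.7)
The plan is to view problem \eqref{penMinProblem} as a DC problem with the decomposition $g := f + \rho \Vert \cdot \Vert_1$ and $h := \rho \vert \cdot \vert_K$, both of which are convex and continuous on $X$ (the continuity of $\vert \cdot \vert_K$ and $\Vert \cdot \Vert_1$ on $X$ follows from \cref{Lem:Cont} together with the continuous embedding $X \hookrightarrow L^1(\Om)$, while $f$ is Gateaux-differentiable and hence finite). First I would handle the case that $u^*$ is a local minimum by invoking \cref{prop:DCnecessary}, which yields $\partial h(u^*) \subseteq \partial g(u^*)$, so that both hypotheses reduce to the strongly critical case.

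Next I would translate strong criticality directly: for any $s \in \partial \vert \cdot \vert_K(u^*)$ we have $\rho s \in \partial h(u^*) \subseteq \partial g(u^*)$. To split the right-hand side, I would apply the Moreau--Rockafellar sum rule. This is legitimate here because $f: X \to \R$ is everywhere finite and continuous (being Gateaux-differentiable) and $\Vert \cdot \Vert_1$ is continuous on $X$ by \cref{Lem:Cont}. Since $f$ is Gateaux-differentiable, $\partial f(u^*) = \{ f'(u^*) \}$, and the sum rule gives
\[
\partial g(u^*) = f'(u^*) + \rho \, \partial \Vert \cdot \Vert_1(u^*).
\]
Hence there exists $r \in \partial \Vert \cdot \Vert_1(u^*)$ such that $\rho s = f'(u^*) + \rho r$ as elements of $X'$, and testing against any $v \in X$ yields the desired identity.

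The only subtle point is ensuring that all subdifferentials are taken over the same space: the penalized problem lives in $X$ but the characterizations of $\partial \vert \cdot \vert_K$ and $\partial \Vert \cdot \Vert_1$ that one would later combine with this corollary are typically stated over $L^1(\Om)$. Here \cref{theo:Subdiff0} is exactly what licenses the identification $\partial^X = \partial^{L^1}$ on these functionals, since $X$ is dense in $L^1(\Om)$. This is the one step I expect to require the most care; the rest is a routine application of standard convex-analytic sum and scalar-multiplication rules.
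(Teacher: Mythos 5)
Your argument is correct and follows exactly the route the paper intends: the paper's proof is the single line ``This is a direct consequence of \cref{prop:DCnecessary}'', and your elaboration (reduce both hypotheses to $\partial h(u^*)\subseteq\partial g(u^*)$ via \cref{prop:DCnecessary}, then split $\partial\bigl(f+\rho\Vert\cdot\Vert_1\bigr)(u^*)=\{f'(u^*)\}+\rho\,\partial\Vert\cdot\Vert_1(u^*)$ by the Moreau--Rockafellar sum rule, which applies since $\Vert\cdot\Vert_1$ is continuous on $X$ and $f$ is finite) is precisely the content left implicit there. Your remark on identifying $\partial^X$ with $\partial^{L^1}$ via \cref{theo:Subdiff0} matches the discussion the paper places immediately before the corollary.
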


\begin{proof}
This is a direct consequence of \cref{prop:DCnecessary}.
\end{proof}

Let us now utilize the developments of the subgradient of $|\cdot|_K$ from \cref{theo:Subdiff} and \cref{Theo:GeneralSubdiff}.

\begin{theorem}\label{cor_optimality_cond}
Let $u^*$ be a local minimum or strongly critical point of problem \eqref{penMinProblem}.
Let $A \subset \supp u^*$ be given such that $\mu(A) \le K$ and $\int_A \vert u^* \vert \dm = \vert u^* \vert_K$.
Then the following statements are satisfied:
\begin{enumerate}
\item \label{optcon_pen_1} $f'(u^*) \in L^\infty(\Om)$,
\item \label{optcon_pen_2} $f'(u^*)(x) = 0$ for almost every $x \in \supp u^* \cap A$,
\item \label{optcon_pen_3} $f'(u^*)(x) = - \rho \sign(u^*)(x)$ for almost every $x \in \supp u^* \setminus A$,
\item \label{optcon_pen_4} $\vert f'(u^*)(x) \vert \leq \rho$ for almost every $x \not\in \supp u^*$.
\end{enumerate}
In addition, $A$ is unique up to sets of measure zero, i.e., if $B \subset \supp u^*$ satisfies $\mu(B) \le K$ and $\int_B \vert u^* \vert \dm = \vert u^* \vert_K$
then $\mu(A \triangle B)=0$, where $A \triangle B$ denotes the symmetric difference of the sets $A,B$.
\end{theorem}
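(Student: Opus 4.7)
The starting point is \cref{Theo:DCnecessary} applied to the DC decomposition of \eqref{penMinProblem} as $(f + \rho \Vert \cdot \Vert_1) - \rho \vert \cdot \vert_K$: for any $s \in \partial \vert \cdot \vert_K(u^*)$ there exists $r \in \partial \Vert \cdot \Vert_1(u^*)$ with
\[
f'(u^*) = \rho(s - r) \quad \text{in } X'.
\]
By \cref{theo:Subdiff0} both subdifferentials can be identified with subsets of $L^\infty(\Om)$, so once concrete $s,r \in L^\infty(\Om)$ verifying this identity are in hand, (\ref{optcon_pen_1}) follows immediately and (\ref{optcon_pen_2})--(\ref{optcon_pen_4}) reduce to reading off $\rho(s-r)$ pointwise.

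The natural choice tailored to the given set $A$ is $s := \sign(u^*)\chi_A$. Since $\Vert s \Vert_\infty \le 1$ and $\Vert s \Vert_0 \le \mu(A) \le K$, the first inclusion of \cref{Theo:GeneralSubdiff} gives $s \in \partial \vert \cdot \vert_K(0)$. The hypothesis $\int_A \vert u^* \vert \dm = \vert u^* \vert_K$ yields $\int_\Om s u^* \dm = \vert u^* \vert_K$, so \cref{lem:Subdiff} upgrades $s$ to an element of $\partial \vert \cdot \vert_K(u^*)$. For the corresponding $r$, the standard computation $\partial \Vert \cdot \Vert_1(0) = \{ r \in L^\infty(\Om) \mid \Vert r \Vert_\infty \le 1\}$ combined with \cref{lem:Subdiff} and the pairing condition $\int r u^* \dm = \Vert u^* \Vert_1$ forces $r = \sign(u^*)$ almost everywhere on $\supp u^*$ and leaves $\vert r \vert \le 1$ almost everywhere on $\Om \setminus \supp u^*$. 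Substituting these $s$ and $r$ into $f'(u^*) = \rho(s-r)$ yields $f'(u^*) = 0$ on $A$ (since $A \subset \supp u^*$), $f'(u^*) = -\rho\,\sign(u^*)$ on $\supp u^* \setminus A$, and $\vert f'(u^*) \vert \le \rho$ on $\Om \setminus \supp u^*$, which together with $\rho(s-r) \in L^\infty(\Om)$ proves (\ref{optcon_pen_1})--(\ref{optcon_pen_4}).

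For uniqueness, let $B \subset \supp u^*$ also satisfy $\mu(B) \le K$ and $\int_B \vert u^* \vert \dm = \vert u^* \vert_K$. The same argument with $B$ in place of $A$ gives $f'(u^*) = 0$ a.e.\ on $B$ and $f'(u^*) = -\rho\,\sign(u^*)$ a.e.\ on $\supp u^* \setminus B$. Comparing with the identities established for $A$ on the pieces $A \setminus B$ and $B \setminus A$ forces $\sign(u^*) = 0$ a.e.\ on $A \triangle B$; since $A \cup B \subset \supp u^*$ and $\rho > 0$, this is only possible when $\mu(A \triangle B) = 0$. The main delicate point in the whole argument is the choice of $s$: in the general, possibly non-atom-free, setting only the inclusions of \cref{Theo:GeneralSubdiff} are available for $\partial \vert \cdot \vert_K(0)$, and it is precisely because $\sign(u^*)\chi_A$ lies in the inner set of that theorem \emph{and} satisfies the pairing identity supplied by the hypothesis on $A$ that \cref{lem:Subdiff} lets us place it in $\partial \vert \cdot \vert_K(u^*)$ without needing an explicit formula for the full subdifferential.
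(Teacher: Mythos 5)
Your proof is correct and follows essentially the same route as the paper: the same choice $s = \sign(u^*)\chi_A$, the same appeal to \cref{Theo:DCnecessary}, \cref{Theo:GeneralSubdiff}, \cref{theo:Subdiff0} and \cref{lem:Subdiff}, and the same uniqueness argument via comparison on $A\setminus B$ and $B\setminus A$. The only cosmetic difference is that you read off the identity $f'(u^*)=\rho(s-r)$ pointwise from the known structure of $s$ and $r$, whereas the paper extracts the same information by testing the variational identity with suitable functions $v$.
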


\begin{proof}
Define $s:= \sign(u^*) \chi_{A}$. Then it follows $\Vert s \Vert_0 \leq K$ and  $s \in \partial \vert \cdot \vert_K(u^*)$ by \cref{Theo:GeneralSubdiff}.
By \cref{Theo:DCnecessary}, there exists $r \in \partial \Vert \cdot \Vert_1(u^*)$ such that
\begin{align}\label{equ:optimality_cond}
\rho \int_{A} \sign(u^*) \cdot v \dm = \langle f'(u^*), v \rangle + \rho \int_\Om r \cdot v \dm
\end{align}
for every $v \in X$.
Analogously to the proof of \cref{theo:Subdiff0}, we can argue that $f'(u^*) \in L^\infty(\Om)$.
The density of $X$ in $L^1(\Om)$ and the continuity of \eqref{equ:optimality_cond} with respect to $v$ show that \eqref{equ:optimality_cond} holds for every $v \in L^1(\Om)$.

Testing with $v = \chi_B \in L^1(\Om)$ for $B \subseteq A $ yields
\[
\rho \int_B \sign(u^*) \dm = \int_B f'(u^*) \dm + \rho \int_B \sign(u^*) \dm,
\]
since $r = \sign(u^*)$ a.e.\@ on $\supp u^*$. This shows $f'(u^*) = 0$ a.e.\@ on $A$.

On the other hand, testing with $v = \chi_B u^*$ for $B \subseteq \supp u^* \setminus A$ gives
\[
0 = \int_B f'(u^*) \cdot u^* \dm + \rho \int_B \vert u^* \vert \dm,
\]
which shows that $f'(u^*)(x) = - \rho \sign(u^*)(x)$ for almost every $x \in \supp u^* \setminus A$.

Lastly, testing with $v = \chi_B$ for $B \subseteq (\supp u^*)^c$ with $\mu(B) < \infty$ results in
\[
0 = \int_B f'(u^*) \dm + \rho \int_B r \dm.
\]
Since $\Vert r \Vert_\infty \leq 1$, this shows $\vert f'(u^*)(x) \vert \leq \rho$ for almost every $x \not\in \supp u^*$.

Let now $B \subset \supp u^*$ be such that $\mu(B) \le K$ and $\int_B \vert u^* \vert \dm = \vert u^* \vert_K$.
Then (\ref{optcon_pen_2}) and (\ref{optcon_pen_3}) imply $0 = f'(u^*) = - \rho \sign(u^*)$ almost everywhere on $A \triangle B$.
This implies $\mu(A\triangle B)=0$.
\end{proof}

For an atom-free measure space this can be strengthened by examining the upper level sets.

\begin{corollary}\label{cor_optimality_cond_atomfree}
Let $(\Om, \mathcal{A}, \mu)$ be atom-free.
Let $u^*$ be a local minimum or strongly critical point of problem \eqref{penMinProblem}.
Then it either holds $\Vert u^* \Vert_0 \leq K$ or there is $t > 0$ such that $\mu(\Om_{>t}) = K$ or $\mu(\Om_{\geq t}) = K$.
Moreover, it holds
\begin{enumerate}
\item \label{optcon_pen_af_2} $f'(u^*)(x) = 0$ for almost every $x \in \{ |u^*| > t\}$ (or $x \in \{ |u^*| \ge t\} \cap \supp u^*$),
\item \label{optcon_pen_af_3} $f'(u^*)(x) = - \rho \sign(u^*)(x)$ for almost every $x \in \{0< |u^*| \leq t\}$ (or $x \in \{0< |u^*| < t\}$),
\item \label{optcon_pen_af_4} $\vert f'(u^*)(x) \vert \leq \rho$ for almost every $x \not\in \supp u^*$,
\end{enumerate}
where $t = 0$ if $\Vert u^* \Vert_0 \leq K$.
\end{corollary}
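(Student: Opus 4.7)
My approach would be to invoke the general optimality conditions from \cref{cor_optimality_cond} and then refine the choice of the set $A$ using the level-set characterization of the largest-$K$-norm in the atom-free case provided by \cref{Goal1}.

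The argument naturally splits into two cases. If $\Vert u^* \Vert_0 \leq K$, I would set $t := 0$ and choose $A := \supp u^*$. Then $\mu(A) = \Vert u^* \Vert_0 \leq K$ and \cref{Theo:ReformSepa} gives $\int_A \vert u^* \vert \dm = \Vert u^* \Vert_1 = \vert u^* \vert_K$, so \cref{cor_optimality_cond} is applicable; items (\ref{optcon_pen_af_2})--(\ref{optcon_pen_af_4}) then hold because $\{|u^*| > 0\} = \supp u^*$ and $\{0 < |u^*| \le 0\} = \emptyset$. Otherwise $\Vert u^* \Vert_0 > K$, and \cref{Goal1} yields some $t \ge 0$ together with $A$ satisfying $\Om_{>t} \subset A \subset \Om_{\ge t}$, $\mu(A) = K$, and $\int_A \vert u^* \vert \dm = \vert u^* \vert_K$. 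I would rule out $t = 0$ by noting that it would force $\supp u^* = \Om_{>0} \subset A$ and hence $\Vert u^* \Vert_0 \leq \mu(A) = K$, a contradiction. Since $t > 0$ implies $A \subset \Om_{\ge t} \subset \supp u^*$, \cref{cor_optimality_cond} again applies and produces items (\ref{optcon_pen_af_2})--(\ref{optcon_pen_af_4}) for this $A$.

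The main obstacle is to show that $t$ can be arranged so that $\mu(\Om_{>t}) = K$ or $\mu(\Om_{\ge t}) = K$, which is what allows $A$ to be identified with an upper level set. I would argue by contradiction: if $\mu(\Om_{>t}) < K < \mu(\Om_{\ge t})$, then $\mu(\Om_{=t}) > 0$ and $m := K - \mu(\Om_{>t}) \in (0, \mu(\Om_{=t}))$. Using \cref{Coratomfree}, the atom-freeness of $\mu$ allows me to construct two subsets $B_1, B_2 \subset \Om_{=t}$ with $\mu(B_1) = \mu(B_2) = m$ and $\mu(B_1 \triangle B_2) > 0$, for instance by bisecting $\Om_{=t}$ and choosing subsets on either side. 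Then $A_j := \Om_{>t} \cup B_j$ for $j = 1,2$ both satisfy the hypotheses on $A$ in \cref{cor_optimality_cond} but $\mu(A_1 \triangle A_2) > 0$, contradicting the uniqueness assertion of that corollary.

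Finally, once $\mu(\Om_{>t}) = K$ (resp. $\mu(\Om_{\ge t}) = K$), I would take $A := \Om_{>t}$ (resp. $A := \Om_{\ge t}$). Then, modulo null sets, $\supp u^* \cap A = \{|u^*| > t\}$ and $\supp u^* \setminus A = \{0 < |u^*| \le t\}$ (resp. $\supp u^* \cap A = \{|u^*| \ge t\} \cap \supp u^*$ and $\supp u^* \setminus A = \{0 < |u^*| < t\}$), so the two alternative formulations in items (\ref{optcon_pen_af_2}) and (\ref{optcon_pen_af_3}) follow directly from \cref{cor_optimality_cond}, while item (\ref{optcon_pen_af_4}) is unchanged.
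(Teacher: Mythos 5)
Your proposal is correct and follows essentially the same route as the paper's proof: obtain $t$ and a sandwiched set $A$ from \cref{Goal1}, use the uniqueness assertion of \cref{cor_optimality_cond} to exclude $\mu(\Om_{>t}) < K < \mu(\Om_{\geq t})$ (you merely spell out the construction of the two distinct maximizing sets that the paper leaves implicit), and then apply \cref{cor_optimality_cond} with $A = \Om_{>t}$ or $A = \Om_{\geq t}$. The only cosmetic difference is that you split off the feasible case $\Vert u^* \Vert_0 \leq K$ up front with $A = \supp u^*$, whereas the paper reads it off from $t=0$; both are fine.
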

\begin{proof}
By \cref{Goal1}, there exist $t \geq 0$ and $\Om_{>t} \subset A \subset \Om_{\geq t}$ with $\mu(A) = K$ and $\int_A \vert u^* \vert \dm = \vert u^* \vert_K$.
If $t = 0$ we have that $\Vert u^* \Vert_0 \leq K$ and get from \cref{cor_optimality_cond} that
\begin{enumerate}
\item $f'(u^*)(x) = 0$ for almost every $x \in \supp u^*$,
\item $\vert f'(u^*)(x) \vert \leq \rho$ for almost every $x \not\in \supp u^*$.
\end{enumerate}
Let $t > 0$ and suppose that $\mu(\Om_{>t}) < K < \mu(\Om_{\geq t})$.
Then there exist sets $A,B$ with $\Om_{>t} \subset A,B \subset \Om_{\geq t}$, $\mu(A) = K = \mu(B)$ and $\int_A \vert u^* \vert \dm = \vert u^* \vert_K = \int_B \vert u^* \vert \dm$ such that $\mu(A \triangle B) > 0$, a contradiction to \cref{cor_optimality_cond}.
Hence, $\mu(\Om_{>t}) = K$ or $\mu(\Om_{\geq t}) = K$.
Applying \cref{cor_optimality_cond} to $\Om_{>t}$ or $\Om_{\geq t}$ respectively shows the claim.
\end{proof}

\cref{cor_optimality_cond} implies the following exact penalization result.

\begin{corollary}\label{cor_exact_penalty}
If $u^*$ is a strongly critical point of problem \eqref{penMinProblem} with $\rho > \Vert f'(u^*)\Vert_\infty$,
then $u^*$ is feasible for the original problem, i.e., it holds $\Vert u^* \Vert_0 \leq K$.
Moreover, $u^*$ satisfies
\[
\langle f'(u^*), u^* \rangle=0.
\]
If in addition $u^*$ is a global minimum of problem \eqref{penMinProblem} with $\rho > \Vert f'(u^*)\Vert_\infty$, then $u^*$ solves the original $L^0$-constrained problem \eqref{MinProblem}.
\end{corollary}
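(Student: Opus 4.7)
The plan is to apply \cref{cor_optimality_cond} as the central tool and extract each of the three claims directly from its conclusions. That corollary guarantees the existence of a measurable set $A \subset \supp u^*$ with $\mu(A) \le K$ and $\int_A |u^*| \dm = |u^*|_K$ such that $f'(u^*)$ vanishes a.e.\@ on $A$, equals $-\rho \sign(u^*)$ a.e.\@ on $\supp u^* \setminus A$, is bounded in absolute value by $\rho$ a.e.\@ outside $\supp u^*$, and lies in $L^\infty(\Om)$. So the hypothesis $\rho > \|f'(u^*)\|_\infty$ is meaningful.

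First I would use the strict inequality to establish feasibility. On $\supp u^* \setminus A$ we would have $|f'(u^*)(x)| = \rho > \|f'(u^*)\|_\infty$ a.e., which is only consistent with the essential supremum bound if $\mu(\supp u^* \setminus A) = 0$. Hence $\supp u^* \subset A$ up to a null set, and therefore
\[
\|u^*\|_0 = \mu(\supp u^*) \le \mu(A) \le K,
\]
so $u^*$ is feasible for \eqref{MinProblem}.

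For the identity $\langle f'(u^*), u^*\rangle = 0$, I would combine two observations: $f'(u^*) = 0$ a.e.\@ on $A$ by \cref{cor_optimality_cond}, and by the previous paragraph $\supp u^* \subset A$ up to null sets, so $f'(u^*) = 0$ a.e.\@ on $\supp u^*$. Since $u^*$ vanishes outside $\supp u^*$, the product $f'(u^*)\,u^*$ is zero $\mu$-almost everywhere, so the pairing vanishes. (The pairing makes sense because $f'(u^*) \in L^\infty(\Om)$ and $u^* \in L^1(\Om)$ via the embedding $X \hookrightarrow L^1(\Om)$.)

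Finally, assume in addition that $u^*$ is a global minimum of \eqref{penMinProblem}. By the first step $P(u^*) = 0$. For any $v \in X$ feasible for the original problem \eqref{MinProblem}, \cref{Theo:ReformSepa} gives $P(v) = 0$, so
\[
f(u^*) = f_\rho(u^*) \le f_\rho(v) = f(v),
\]
and $u^*$ solves \eqref{MinProblem}. I do not anticipate a serious obstacle: all three assertions drop out once \cref{cor_optimality_cond} is in hand, and the only minor point requiring care is confirming that $f'(u^*) \in L^\infty(\Om)$ so that $\|f'(u^*)\|_\infty$ is a sensible quantity in the hypothesis.
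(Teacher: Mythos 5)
Your proof is correct and follows the paper's argument almost exactly: both rest on \cref{cor_optimality_cond}, use condition (\ref{optcon_pen_3}) together with $\rho > \Vert f'(u^*)\Vert_\infty$ to force $\mu(\supp u^* \setminus A)=0$ and hence feasibility, and derive the exact-penalty claim from $\Vert u^*\Vert_1 - \vert u^*\vert_K = 0$. The only place you diverge is the identity $\langle f'(u^*), u^*\rangle = 0$: the paper tests the stationarity condition of \cref{Theo:DCnecessary} with $v=u^*$ and invokes $\langle s, u^*\rangle = \vert u^*\vert_K = \Vert u^*\Vert_1 = \langle r, u^*\rangle$ via \cref{lem:Subdiff} and \cref{Theo:ReformSepa}, whereas you observe that conditions (\ref{optcon_pen_2}) and the just-established inclusion $\supp u^* \subset A$ (up to null sets) give $f'(u^*)=0$ a.e.\@ on $\supp u^*$, so the integrand $f'(u^*)u^*$ vanishes a.e.\@ pointwise. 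Both derivations are valid; yours is slightly more direct once \cref{cor_optimality_cond} is available, while the paper's duality argument has the advantage of not needing feasibility first (it would apply to any strongly critical point at which the set $A$ exhausts the support). One cosmetic remark: the existence of the set $A$ is not asserted by \cref{cor_optimality_cond} itself (which takes $A$ as given) but follows from \cref{Theo:GeneralExist} after intersecting with $\supp u^*$; the paper is equally terse on this point, so it is not a gap.
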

\begin{proof}
Let $A \subset \supp u^*$ be given such that $\mu(A) \le K$ and $\int_A \vert u^* \vert \dm = \vert u^* \vert_K$.
Then condition (\ref{optcon_pen_3}) of  \cref{cor_optimality_cond}
implies $\mu ( \supp u^* \setminus A) =0$. Hence, $\Vert u^* \Vert_0 \leq K$.
Using $v = u^*$ in \cref{Theo:DCnecessary} proves $\langle f'(u^*), u^* \rangle=0$ since $\langle  s, u^* \rangle = \vert u^* \vert_K = \Vert u^* \Vert_1 = \langle r, u^*\rangle$ by \cref{lem:Subdiff} and \cref{Theo:ReformSepa}.
The last claim follows directly from $\Vert u^* \Vert_1 - \vert u^* \vert_K = 0$.
\end{proof}

\begin{remark}\label{Remark:Exact_Pen}
Note that \cref{cor_exact_penalty} implies the exact penalization result of \cite[Theorem 4, Corollary 4]{Gotoh2017}.
Let $u^*$ be a global solution of
\[
\min_{u \in \R^n} f(u) + \rho \left( \Vert u \Vert_1 - \vert u \vert_K \right),
\]
with $\rho > 0$, $f(u) := \frac{1}{2} u^T Q u + q^T u$ for $Q \in \R^{n \times n}$ symmetric and positive definite and $q \in \R^n$.
Then  $u^*$
satisfies $\Vert u^* \Vert_2 \leq \frac{2 \Vert q \Vert_2}{\lambda_{\text{min}}(Q)}$, \cite[Appendix A4]{Gotoh2017}.
By \cref{cor_exact_penalty}, we have the exact penalization if $\rho> \rho^*$, where $\rho^*$ is defined as
\[
\Vert f'(u^*) \Vert_\infty \leq \Vert Q u^* + q\Vert_2 \leq \Vert Q \Vert_2 \Vert u^* \Vert_2 + \Vert q \Vert_2
\leq \Vert q \Vert_2 \left(\frac{2 \Vert Q \Vert_2}{\lambda_{\text{min}}(Q)} + 1 \right) =: \rho^*,
\]
which is similar to \cite[Theorem 4, Corollary 4]{Gotoh2017}.
\end{remark}

The conclusions of \cref{cor_optimality_cond} only hold for strongly critical points.
However, the DC-Algorithm generates iterates that in general converge towards critical points only, cf. \cref{Theo:DCConvergence}.
We can derive the following necessary condition for critical points.

\begin{lemma}\label{Lem:CriticalNecessary}
Let $u^*$ be a critical point of problem \eqref{penMinProblem} with $s \in \partial \vert \cdot \vert_K(u^*)$ and $r \in \partial \Vert \cdot \Vert_1(u^*)$. Then it holds
\[
f(u^*) + \rho \left(\Vert u^* \Vert_1 - \vert u^* \vert_K\right) \leq f(v) + \rho \langle r-s, v \rangle
\]
for all $v \in X$.
\end{lemma}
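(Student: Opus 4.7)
The plan is to unpack the critical point condition for the DC pair $(g,h)=(f+\rho\|\cdot\|_1,\,\rho|\cdot|_K)$ and combine it with convexity of $f$ together with the positive homogeneity of the two seminorms. First, I would read criticality as the existence of $\xi\in X'$ with $\xi\in\partial(f+\rho\|\cdot\|_1)(u^*)$ and $\xi\in\rho\,\partial|\cdot|_K(u^*)$. Since $f$ is convex and Gateaux-differentiable, the sum rule yields $\partial(f+\rho\|\cdot\|_1)(u^*)=\{f'(u^*)\}+\rho\,\partial\|\cdot\|_1(u^*)$, so we can write $\xi=f'(u^*)+\rho r$ for some $r\in\partial\|\cdot\|_1(u^*)$ and $\xi=\rho s$ for some $s\in\partial|\cdot|_K(u^*)$. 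These are precisely the $s$ and $r$ in the statement, and subtracting the two representations gives the Euler-type identity
\[
f'(u^*)=\rho(s-r).
\]

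Next, because $|\cdot|_K$ and $\|\cdot\|_1$ are convex and positively homogeneous, \cref{lem:Subdiff} gives the two scalar identities $\langle s,u^*\rangle=|u^*|_K$ and $\langle r,u^*\rangle=\|u^*\|_1$. Convexity of $f$ then provides the subgradient inequality
\[
f(v)\ge f(u^*)+\langle f'(u^*),v-u^*\rangle\qquad \forall v\in X.
\]
Substituting $f'(u^*)=\rho(s-r)$ and using the two homogeneity identities to evaluate $\rho\langle s-r,u^*\rangle=\rho(|u^*|_K-\|u^*\|_1)$, the right-hand side becomes $f(u^*)+\rho(\|u^*\|_1-|u^*|_K)+\rho\langle s-r,v\rangle$, and rearranging yields the desired inequality.

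The only real subtlety is the identification of the $s$ and $r$ appearing in the statement with those produced by the criticality witness $\xi$; for arbitrary choices in $\partial|\cdot|_K(u^*)$ and $\partial\|\cdot\|_1(u^*)$ the asserted inequality is false, so it is essential that the sum rule delivers $f'(u^*)=\rho(s-r)$ for exactly this pair. The Gateaux-differentiability assumption on $f$ and the continuity of the two seminorms on $L^1(\Om)$ (so that $\partial^X=\partial^{L^1}$ by \cref{theo:Subdiff0}) ensure that the sum rule applies without qualification constraints; after that, the proof is a one-line rearrangement.
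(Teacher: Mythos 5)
Your proof is correct and follows essentially the same route as the paper: extract $f'(u^*)=\rho(s-r)$ from criticality, use \cref{lem:Subdiff} to evaluate $\langle s,u^*\rangle$ and $\langle r,u^*\rangle$, and conclude from the gradient inequality for the convex $f$. Your added remark that $s$ and $r$ must be the pair witnessing criticality (the statement fails for arbitrary subgradients) is a correct and worthwhile clarification of what the lemma's hypothesis implicitly means.
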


\begin{proof}
By criticality, it holds $f'(u^*) = \rho(s-r)$.
Moreover, it holds $\langle  s, u^* \rangle = \vert u^* \vert_K$ and $\langle r, u^*\rangle = \Vert u^* \Vert_1$ by \cref{lem:Subdiff}.
Since $f$ is convex, it follows for $v\in X$
\[
f(v) - f(u^*) \geq \rho \langle s-r, v-u^* \rangle \geq \rho\langle s-r, v \rangle + \rho \left(\Vert u^* \Vert_1 - \vert u^* \vert_K \right),
\]
which is the claim.
\end{proof}

\begin{corollary}
Let $(\Om, \mathcal{A}, \mu)$ be atom-free and
$u$ be a critical point of problem \eqref{penMinProblem} with $s \in \partial \vert \cdot \vert_K(u)$ and $r \in \partial \Vert \cdot \Vert_1(u)$.
Take $t\geq 0$ from \cref{Goal1} and define
\[
 u_t := \begin{cases} u-t & \text{ if } u > t, \\
         0 &   \text{ if } |u| \le t, \\
         u+t & \text{ if } u <- t, \\
        \end{cases}
\]
then $\Vert u_t \Vert_0 \leq K$, $\langle r - s, u_t \rangle = 0$ and $f(u) + \rho(\Vert u \Vert_1 - \vert u \vert_K) \leq f(u_t)$.
\end{corollary}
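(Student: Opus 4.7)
The plan is to show that $r$ and $s$ agree $\mu$-almost everywhere on $\Om_{>t}$; since $u_t$ vanishes outside $\Om_{>t}$, this will give $\langle r-s, u_t\rangle = 0$, whereupon \cref{Lem:CriticalNecessary} with $v=u_t$ delivers the desired inequality. The case $u=0$ is trivial, so assume $u\neq 0$. Feasibility of $u_t$ is immediate: by construction $\supp u_t \subseteq \{|u|>t\} = \Om_{>t}$, and $\mu(\Om_{>t})\le K$ by the choice of $t$ in \cref{Goal1}, so $\|u_t\|_0 \le K$.

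The key step is to identify $s$ on $\Om_{>t}$. Since $r\in\partial\|\cdot\|_1(u)$, the standard characterization gives $r=\sign(u)$ a.e.\@ on $\supp u \supseteq \Om_{>t}$. For $s$, \cref{theo:Subdiff} provides $\|s\|_\infty\le 1$, $\|s\|_1\le K$, and $\int_\Om s u \dm = |u|_K$, so $|s|\in D$ in the notation of \cref{LemmaExtD}. \cref{TheoremAltKRepr} then yields $\int_\Om |s||u|\dm \le |u|_K$. Combining this with the pointwise bound $s u \le |s||u|$ and the equality $\int_\Om su\dm = |u|_K$ forces both inequalities to be equalities. The pointwise equality $su = |s||u|$ a.e.\@ implies $s = |s|\sign(u)$ a.e.\@ on $\supp u$, while the integral equality $\int_\Om |s||u|\dm = |u|_K$, revisited through the chain of inequalities in the proof of \cref{TheoremAltKRepr}, forces $|s|=1$ a.e.\@ on $\Om_{>t}$. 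Together, $s = \sign(u) = r$ a.e.\@ on $\Om_{>t}$, so $(r-s)u_t = 0$ a.e., yielding $\langle r-s, u_t\rangle = 0$.

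Applying \cref{Lem:CriticalNecessary} with $v=u_t$ then gives
\[
f(u) + \rho(\|u\|_1 - |u|_K) \le f(u_t) + \rho\langle r-s, u_t\rangle = f(u_t),
\]
which is the claim.

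The main obstacle is the middle paragraph: to extract the pointwise information $|s|=1$ on $\Om_{>t}$ from $s\in\partial|\cdot|_K(u)$, one must exploit the equality case in the chain of inequalities inside the proof of \cref{TheoremAltKRepr} rather than merely quoting its statement. The remaining ingredients (feasibility, the subdifferential of $\|\cdot\|_1$, and the variational inequality \cref{Lem:CriticalNecessary}) are routine.
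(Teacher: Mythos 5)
Your proposal is correct and takes essentially the same route as the paper: the paper's proof also reduces everything to showing $s=\sign(u)=r$ a.e.\ on $\Om_{>t}$ (which it dispatches with the single phrase ``by the proof of \cref{TheoremAltKRepr}'') and then invokes \cref{Lem:CriticalNecessary} with $v=u_t$. You merely make explicit the equality-case analysis in the chain of inequalities \eqref{eq_sup_D} that the paper leaves implicit, which is a faithful and correct expansion of that step.
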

\begin{proof}
First, note that by definition $\supp u_t \subset \Om_{>t}$ and hence $\Vert u_t \Vert_0 \leq K$.
By the proof of \cref{TheoremAltKRepr}, we have $s = \sign(u) = r$ a.e.\@ on $\Om_{>t}$.
Therefore, it holds $\int_\Om r u_t \dm = \int_\Om s u_t \dm$.
Using the inequality of \cref{Lem:CriticalNecessary} with $v = u_t$ shows the claim.
\end{proof}

If a critical point of the penalized problem is feasible, we have the following result.
\begin{corollary}\label{Cor:Critical}
Let $u^*$ be a critical point of problem \eqref{penMinProblem} that is feasible for the original problem \eqref{MinProblem}, i.e., $\Vert u^* \Vert_0 \leq K$.
Then it holds
\[
\langle f'(u^*), u^* \rangle=0.
\]
Moreover, we have $f(u^*) \leq f(v)$ for all $v \in X$ with $\supp v \subset \supp u^*$.
\end{corollary}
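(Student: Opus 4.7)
The plan is to leverage the criticality of $u^*$ to extract dual certificates that must agree on $\supp u^*$ as a consequence of feasibility. Concretely, the DC decomposition $g = f + \rho\|\cdot\|_1$ and $h = \rho|\cdot|_K$, together with the Gateaux differentiability of $f$, turns the criticality condition $\partial g(u^*) \cap \partial h(u^*) \neq \emptyset$ into the existence of $s \in \partial|\cdot|_K(u^*)$ and $r \in \partial\|\cdot\|_1(u^*)$ with $f'(u^*) = \rho(s-r)$. Positive homogeneity of $|\cdot|_K$ and $\|\cdot\|_1$ then yields $\langle s, u^*\rangle = |u^*|_K$ and $\langle r, u^*\rangle = \|u^*\|_1$ via \cref{lem:Subdiff}, while feasibility of $u^*$ combined with \cref{Theo:ReformSepa} gives $\|u^*\|_1 = |u^*|_K$. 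Pairing $f'(u^*) = \rho(s-r)$ with $u^*$ then immediately gives $\langle f'(u^*), u^*\rangle = 0$.

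For the second claim, I would invoke \cref{Lem:CriticalNecessary}, whose left-hand side collapses to $f(u^*)$ once $\|u^*\|_1 = |u^*|_K$ is inserted, leaving
\[
f(u^*) \le f(v) + \rho \langle r - s, v \rangle \qquad \forall v \in X.
\]
Thus it suffices to show $\langle r-s, v\rangle = 0$ whenever $\supp v \subset \supp u^*$. For this I would establish that $r$ and $s$ both coincide with $\sign(u^*)$ almost everywhere on $\supp u^*$. For $r$ this is the standard description of $\partial\|\cdot\|_1(u^*)$. For $s$, the upper inclusion in \cref{Theo:GeneralSubdiff} supplies $\|s\|_\infty \le 1$, and coupling this with $\langle s, u^*\rangle = |u^*|_K = \|u^*\|_1$ forces the pointwise saturation $s u^* = |u^*|$ a.e., i.e., $s = \sign(u^*)$ a.e.\@ on $\supp u^*$. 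The contribution of $v$ outside $\supp u^*$ vanishes by hypothesis, and the contribution inside vanishes because $r - s = 0$ there, which closes the argument.

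The only real subtlety is that the available description of $\partial|\cdot|_K$ at a nonzero point in the general (not atom-free) setting is weaker than in the atom-free case of \cref{theo:Subdiff}; but since only the bound $\|s\|_\infty \le 1$ is needed and this is precisely the upper inclusion of \cref{Theo:GeneralSubdiff}, no additional machinery is required. Everything else is bookkeeping with the positive homogeneity lemma and the $L^1$-subgradient identification.
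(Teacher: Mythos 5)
Your proposal is correct and follows essentially the same route as the paper: criticality yields $0 = f'(u^*) + \rho(r-s)$, testing with $u^*$ together with \cref{lem:Subdiff} and \cref{Theo:ReformSepa} gives the first claim, and the second claim follows from \cref{Lem:CriticalNecessary} once $r = \sign(u^*) = s$ a.e.\@ on $\supp u^*$ is established. Your saturation argument for $s = \sign(u^*)$ on $\supp u^*$ (via $\Vert s \Vert_\infty \le 1$ from \cref{Theo:GeneralSubdiff} and $\int_\Om s u^* \dm = \Vert u^* \Vert_1$) correctly supplies a detail the paper's proof leaves implicit.
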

\begin{proof}
Since $u^*$ is a critical point, there exist $s \in \partial \vert \cdot \vert_K (u^*)$ and $r \in \partial \Vert \cdot \Vert_1 (u^*)$ such that $0 = f'(u^*) + \rho(r-s)$.
Testing this equation with $u^*$ proves the claim, where we have used
$ \langle r-s , u^*\rangle = \|u^*\|_1 - |u^*|_K =0$, see \cref{lem:Subdiff} and \cref{Theo:ReformSepa}.
Since $\Vert u^* \Vert_0 \leq K$, it holds $r = \sign(u^*) = s$ a.e.\@ on $\supp u^*$.
Hence, for $v \in X$ with $\supp v \subset \supp u^*$ it holds $\langle r - s, v \rangle = 0$ and  \cref{Lem:CriticalNecessary} shows the claim.
\end{proof}

In the general case,
we can prove that weak limit points are feasible. In addition, we have a weaker variant of the condition $ \langle f'(u^*), u^* \rangle=0$.

\begin{theorem}
Let $(\rho_n)> 0$ be given with $\lim_{n \to \infty} \rho_n = \infty$, and let
$(u_n) \subseteq X$ be a sequence of critical points of problem \eqref{penMinProblem} with $\rho = \rho_n$.

Then the sequence $(u_n)$ is bounded in $X$ and it holds $\Vert u_n \Vert_1 - \vert u_n \vert_K \to 0$.
Moreover, every weak accumulation point $\bar u$ is feasible for the original problem \eqref{MinProblem}, i.e., $\Vert \bar u \Vert_0 \leq K$,  and satisfies $f(\bar u) \leq f(0)$.
In addition, it holds
\[
\langle f'(u_n), u_n \rangle \in [\inf_{v \in X} f(v) - f(0),\, 0] \quad \forall n \in \N.
\]
\end{theorem}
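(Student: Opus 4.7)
The plan is to base everything on \cref{Lem:CriticalNecessary} evaluated at the trivial test point $v=0$, which gives a sharp bound uniformly in $n$. For each $n$, criticality yields $s_n \in \partial |\cdot|_K(u_n)$ and $r_n \in \partial \|\cdot\|_1(u_n)$ with $f'(u_n) = \rho_n(s_n - r_n)$. Inserting $v=0$ into \cref{Lem:CriticalNecessary} produces
\[
 f(u_n) + \rho_n\bigl(\|u_n\|_1 - |u_n|_K\bigr) \le f(0).
\]
Since $|u|_K \le \|u\|_1$ always (take $A = \supp u$ in \cref{DefkNorm} combined with monotonicity in $K$), both summands on the left are bounded below: $\rho_n P(u_n) \ge 0$ and $f(u_n) \ge \inf f > -\infty$. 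Hence $f(u_n) \le f(0)$, coercivity of $f$ gives boundedness of $(u_n)$ in $X$, and $\rho_n P(u_n) \le f(0) - \inf f$ forces $P(u_n) = \|u_n\|_1 - |u_n|_K \to 0$ because $\rho_n \to \infty$.

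For a weakly convergent subsequence $u_{n_k} \rightharpoonup \bar u$ (which exists by reflexivity of $X$ and boundedness), the compact embedding $X \hookrightarrow L^1(\Omega)$ gives $u_{n_k} \to \bar u$ in $L^1(\Omega)$, and \cref{hcontinuous} yields $P(\bar u) = \lim_k P(u_{n_k}) = 0$. By \cref{Theo:ReformSepa} this is equivalent to $\|\bar u\|_0 \le K$, i.e., feasibility for \eqref{MinProblem}. Weak lower semicontinuity of $f$ combined with $f(u_{n_k}) \le f(0)$ gives $f(\bar u) \le \liminf_k f(u_{n_k}) \le f(0)$.

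The interval bound on $\langle f'(u_n), u_n \rangle$ follows by testing the criticality identity $f'(u_n) = \rho_n(s_n - r_n)$ with $u_n$ itself. Since both $|\cdot|_K$ and $\|\cdot\|_1$ are positively homogeneous, \cref{lem:Subdiff} yields $\langle s_n, u_n\rangle = |u_n|_K$ and $\langle r_n, u_n\rangle = \|u_n\|_1$, so
\[
 \langle f'(u_n), u_n \rangle = \rho_n\bigl(|u_n|_K - \|u_n\|_1\bigr) = -\rho_n P(u_n) \le 0.
\]
The lower bound is immediate from the first paragraph: $-\rho_n P(u_n) \ge f(u_n) - f(0) \ge \inf_{v \in X} f(v) - f(0)$, which is the claimed two-sided bound.

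The only mild subtlety is justifying $|u|_K \le \|u\|_1$ and ensuring that \cref{lem:Subdiff} applies on $X$ rather than on $L^1(\Omega)$; the latter is handled by \cref{theo:Subdiff0}, using that $X$ is dense in $L^1(\Omega)$, so the positive homogeneity argument transfers to subgradients in $X'$. No deep new work is required beyond carefully chaining the already established inequalities.
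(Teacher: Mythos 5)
Your proposal is correct and follows essentially the same route as the paper: both rest on \cref{Lem:CriticalNecessary} with $v=0$ to obtain $f(u_n)+\rho_n P(u_n)\le f(0)$, deduce boundedness from coercivity and $P(u_n)\to 0$ from $\rho_n\to\infty$, use weak continuity of $P$ (\cref{hcontinuous}) and \cref{Theo:ReformSepa} for feasibility of weak limits, and test the criticality identity with $u_n$ together with \cref{lem:Subdiff} to get $\langle f'(u_n),u_n\rangle=-\rho_n P(u_n)\in[\inf_v f(v)-f(0),\,0]$. The only cosmetic difference is that you conclude $f(\bar u)\le f(0)$ from $f(u_{n_k})\le f(0)$ and weak lower semicontinuity directly, whereas the paper passes to the limit in the full penalized inequality; both are equally valid.
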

\begin{proof}
Using $v=0$ in  \cref{Lem:CriticalNecessary} we get
\begin{equation}\label{Inequ:PenaltyCritical}
\inf_{v \in X} f(v) + \rho_n \left(\Vert u_n \Vert_1 - \vert u_n \vert_K\right) \leq f(u_n) + \rho_n \left(\Vert u_n \Vert_1 - \vert u_n \vert_K\right) \leq f(0)
\end{equation}
for every $n \in \N$.
Suppose that $(u_n)$ is not bounded.
Then it holds $\Vert u_n \Vert_X \to \infty$ at least on a subsequence.
Since $f$ is coercive and bounded from below, we get the boundedness of $(u_n)$ in $X$.
Since $X$ is reflexive, this implies the existence of a weak accumulation point.
Moreover, $\rho_n \to \infty$  implies $\Vert u_n \Vert_1 - \vert u_n \vert_K \to 0$.

Let $\bar u$ be a weak accumulation point of $(u_n)$.
W.l.o.g. we can assume that the whole sequence converges weakly to $\bar u$.
Then the weak continuity of $\vert \cdot \vert_K$ by \cref{hcontinuous} implies $\Vert \bar u \Vert_1 - \vert \bar u \vert_K = 0$, i.e., $\Vert \bar u \Vert_0 \leq K$.

The inequality \eqref{Inequ:PenaltyCritical} directly shows $\rho_n \left(\Vert u_n \Vert_1 - \vert u_n \vert_K\right) \in [0, f(0) - \inf_{v \in X} f(v)]$.
Let $s_n \in \partial \vert \cdot \vert_K(u_n)$ and $r_n \in \partial \Vert \cdot \Vert_1(u_n)$ be the subgradients for $u_n$ that satisfy criticality, i.e., $0 = f'(u_n) + \rho_n(r_n - s_n)$.
Testing this equality with $u_n$ provides
\[
0 = \langle f'(u_n), u_n \rangle + \rho_n \left( \Vert u_n \Vert_1 - \vert u_n \vert_K \right),
\]
which shows the claim.
Since $f$ is weakly lower semicontinuous, passing to the limit in \eqref{Inequ:PenaltyCritical}  shows
\[
f(\bar u) \leq \liminf_{n \to \infty} f(u_n) + \rho_n \left(\Vert u_n \Vert_1 - \vert u_n \vert_K\right) \leq f(0).
\]
\end{proof}

\section{Aspects of Discretization}\label{Section:ExampleProblem}

Let $\Om \subseteq \R^d$ be a bounded domain. In this section we will work with the Lebesgue measure, which is atom-free, see, e.g., \cite[211M]{Fremlin2003}.
We are interested in numerically solving the $L^0$-constrained problem
\begin{align} \label{Problem}
\min_{u \in H_0^1(\Om)} \frac{1}{2} \Vert \nabla u \Vert_{L^2(\Om)}^2 - \int_{\Om} g u \dm \quad \text{subject to} \quad \Vert u \Vert_0 \leq K
\end{align}
for $g \in L^2(\Om)$ and $K \in (0, \mu(\Om))$.
This is motivated by a shape optimization problem considered in \cite{ButtazzoMaialeVelichkov2021}.
In addition, this problem serves as a prototype for more complicated problems including optimal control problems.

Note that the assumptions of \cref{Section:PenalizedProblem} are satisfied for problem \eqref{Problem} and hence \eqref{Problem} is globally solvable by \cref{TheoremMinGlobalSolution}.
Moreover, we can prove that the assumptions of \cref{Theo:DCConvergence} are satisfied.
We even have strong convergence of the subgradients.

\begin{theorem}
Let $(u^k)$ be a sequence generated by the DC-Algorithm, \cref{GenDCA}, applied to the penalization of problem \eqref{Problem} with the associated sequence $(s^k)$ of subgradients.
Then the assumptions of \cref{Theo:DCConvergence} are met, i.e., the objective is a DC function which is bounded from below such that at least one convex component is strongly convex and there is a subsequence indexed by $(k_n)$ such that $u^{k_n} \to \bar u$ in $H_0^1(\Om)$ and
$s^{k_n} \to \bar s$ in $H^{-1}(\Om)$.
\end{theorem}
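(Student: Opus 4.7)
The plan is to write the penalized objective of \cref{Problem} as $G(u) - H(u)$ with
\[
G(u) := \tfrac{1}{2}\Vert \nabla u \Vert_{L^2(\Om)}^2 - \int_\Om g u \dm + \rho \Vert u \Vert_1, \qquad H(u) := \rho \vert u \vert_K.
\]
Both summands are convex. $G$ is proper, continuous on $H_0^1(\Om)$, and strongly convex with respect to the norm $\Vert \nabla \cdot \Vert_{L^2}$, which is equivalent to the $H_0^1$-norm by Poincar\'e's inequality. The function $H$ is convex and continuous on $H_0^1(\Om)$ by \cref{Lem:Cont} combined with the continuous embedding $H_0^1(\Om) \hookrightarrow L^1(\Om)$. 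Applying Young's inequality and Poincar\'e to the linear term, together with the trivial bound $\Vert u \Vert_1 - \vert u \vert_K \ge 0$, shows that $f := G - H$ is bounded from below and coercive on $H_0^1(\Om)$.

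Boundedness of $(u^k)$ in $H_0^1(\Om)$ then follows from $f(u^{k+1}) \le f(u^k) \le f(u^0)$, which is guaranteed by \cref{Lem:DCconvergence}, and the coercivity of $f$. By reflexivity of $H_0^1(\Om)$, a subsequence $(u^{k_n})$ satisfies $u^{k_n} \rightharpoonup \bar u$ in $H_0^1(\Om)$, and by the compact embedding $H_0^1(\Om) \hookrightarrow L^2(\Om)$ we also have $u^{k_n} \to \bar u$ in $L^2(\Om)$.

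The principal effort lies in upgrading this to strong convergence in $H_0^1(\Om)$. The first-order condition for $u^{k_n}$ as solution of the DC subproblem reads $s^{k_n-1} \in \partial G(u^{k_n})$, i.e.,
\[
\int_\Om \nabla u^{k_n} \cdot \nabla v \dm = \int_\Om \left( g + s^{k_n-1} - \rho\, r^{k_n} \right) v \dm \qquad \forall v \in H_0^1(\Om),
\]
with $r^{k_n} \in \partial \Vert \cdot \Vert_1(u^{k_n}) \subset L^\infty(\Om)$, $\Vert r^{k_n} \Vert_\infty \le 1$, and $s^{k_n-1} \in L^\infty(\Om)$, $\Vert s^{k_n-1} \Vert_\infty \le \rho$, the latter by \cref{theo:Subdiff} (applicable because the Lebesgue measure on $\Om$ is atom-free and $H_0^1(\Om)$ embeds densely into $L^1(\Om)$, cf.\ \cref{theo:Subdiff0}). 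Testing with $v = u^{k_n} - \bar u$, the right-hand side is dominated by the uniform $L^2$-bound on $g + s^{k_n-1} - \rho\, r^{k_n}$ times $\Vert u^{k_n} - \bar u \Vert_{L^2} \to 0$. The left-hand side equals $\Vert \nabla u^{k_n} \Vert_{L^2}^2 - \int_\Om \nabla u^{k_n} \cdot \nabla \bar u \dm$, whose second summand tends to $\Vert \nabla \bar u \Vert_{L^2}^2$ by weak convergence. Hence $\Vert \nabla u^{k_n} \Vert_{L^2} \to \Vert \nabla \bar u \Vert_{L^2}$, and together with weak convergence and uniform convexity this yields $u^{k_n} \to \bar u$ in $H_0^1(\Om)$.

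Strong convergence of $(s^{k_n})$ in $H^{-1}(\Om)$ then follows from its $L^\infty$-bound via \cref{theo:Subdiff}: since $\Om$ is bounded, $(s^{k_n})$ is bounded in $L^2(\Om)$, and the embedding $L^2(\Om) \hookrightarrow H^{-1}(\Om)$ is compact by duality with the compact embedding $H_0^1(\Om) \hookrightarrow L^2(\Om)$. Along a further subsequence, $s^{k_n} \to \bar s$ in $H^{-1}(\Om)$. The main obstacle is engineering the strong $H_0^1$-convergence from the subproblem's optimality system; the verification of the DC structure, strong convexity, lower boundedness, and compactness of the subgradients is routine.
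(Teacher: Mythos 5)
Your proof is correct and follows essentially the same route as the paper: the same DC splitting, strong convexity and lower boundedness via Poincar\'e, and the uniform $L^\infty$-bounds on the subgradients $s^k$ and $r^k$ (via \cref{lem:Subdiff}, \cref{theo:Subdiff}, \cref{theo:Subdiff0}) combined with the compactness of $L^2(\Om) \hookrightarrow H^{-1}(\Om)$. The only difference is in the last step: where you obtain strong $H_0^1$-convergence by testing the subproblem's optimality system with $u^{k_n}-\bar u$ and combining norm convergence with weak convergence, the paper observes directly that the right-hand side $g + s^{k_n-1} - \rho r^{k_n}$ of $-\Delta u^{k_n} = g + s^{k_n-1} - \rho r^{k_n}$ is precompact in $H^{-1}(\Om)$ and applies the continuity of $(-\Delta)^{-1}\colon H^{-1}(\Om)\to H_0^1(\Om)$, which also renders your coercivity/boundedness detour unnecessary.
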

\begin{proof}
The first term of the objective functional is the squared norm of the Hilbert space $H_0^1(\Om)$ and hence strongly convex.
The boundedness from below follows directly from Poincare's inequality, see, e.g., \cite[Satz 6.13]{Dobrowolski2010}.

Let $r^k \in \partial \Vert \cdot \Vert_1(u^k)$ be such that
\[
-\Delta u^{k+1} = g - \rho(r^k - s^k),
\]
which exists since $u^{k+1}$ minimizes the DC subproblem.
Here, $\rho > 0$ is the penalty parameter.
By \cref{lem:Subdiff}, the sequences $(s^k)$ and $(r^k)$ are bounded in $L^\infty(\Om)$.
Since $\Om$ is bounded, $L^\infty(\Om)$ is continuously embedded into $L^2(\Om)$ and $L^2(\Om)$ is compactly embedded into $H^{-1}(\Om)$.
This shows that there exists a strongly convergent subsequence of the right-hand side of the above equation in $H^{-1}(\Om)$.

The strong convergence of the associated subsequence of $(u^{k+1})$ in $H_0^1(\Om)$ now follows by the continuity of the map $(-\Delta)^{-1}: H^{-1}(\Om) \to H_0^1(\Om)$, cf. \cite[Satz 6.29]{Dobrowolski2010}.
Hence, there is a subsequence indexed by $(k_n)$ such that $u^{k_n} \to \bar u$ in $H_0^1(\Om)$.
Because of the boundedness of the associated subsequence of subgradients $(s^{k_n})$ in $L^\infty(\Om)$, the claim follows analogously to above after possibly extracting another strong convergent subsequence in $H^{-1}(\Om)$.
\end{proof}

At first we take a closer look at the discretization of the $L^0$-constraint in the context of finite element methods.

\subsection{Reformulation of $L^0$-Constraints in the Space of Piecewise Linear Continuous Functions} \label{section:DiscPiecewiseLin}

We consider a standard finite element discretization with simplices.
Let $\mathcal{T}$ be a shape-regular mesh with $d$-simplex elements $T_i$, $i=1,...,m$,
and nodal basis functions $\varphi_j$, $j = 1,...,N$, which generate the space $V_N$ of piecewise linear continuous functions $u_h$.

Define the vector $\mu(\mathcal{T}) \in \R^m$ by
\[
\mu(\mathcal{T})_i := \mu(T_i),
\]
and the matrix $D \in \R^{m \times N}$ by
\[
D_{i,j} := \begin{cases}
1 & \text{ if the node } j \text{ is a vertex of the element } T_i, \\
0 & \text{ otherwise}.
\end{cases}
\]
Given $u_h \in V_N$, we denote by $u \in \R^N$ the vector of coefficients with respect to the basis $(\varphi_j)$.
Furthermore, for $u_h \in V_N$ we define the vector
\[
w_u := D \vert u \vert \in \R^m,
\]
where the absolute value is taken elementwise.
Therefore, $w_{u,i}$ is the sum of the absolute values of $u_h$ at the $d+1$ vertices of the element $T_i$.

For these vectors, we consider (pseudo-)norms weighted by $\mu(\mathcal{T})$, i.e.,
\[
 \Vert w_u \Vert_{0, \mu(\mathcal{T})} := \sum_{i=1}^m \mu(\mathcal{T})_i |w_{u,i}|_0,
\]
and similarly for $\|w_u\|_{1,\mu(\mathcal{T})}$ and $|w_u|_{K,\mu(\mathcal{T})}$,
which corresponds to defining the measure $\mu$ on $\{1, \dots, m\}$ by $\mu(\{i\}) = \mu(\mathcal T)_i$.
Using these weighted (pseudo-)norms,
we can express the $L^0(\Om)$-pseudo-norm of a function $u_h \in V_N$.

\begin{lemma}\label{Lem:L0refDis}
Let $u_h \in V_N$, then $\Vert u_h \Vert_0 = \Vert w_u \Vert_{0, \mu(\mathcal{T})}$.
\end{lemma}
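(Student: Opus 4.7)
The plan is straightforward: decompose $\Om = \bigcup_{i=1}^m T_i$ (where the overlaps between simplices lie on lower-dimensional faces and hence have $d$-dimensional Lebesgue measure zero), and compare the two quantities element by element.

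On a fixed simplex $T_i$, the function $u_h|_{T_i}$ is affine, since it is a linear combination of the $d+1$ nodal basis functions restricted to $T_i$, each of which is affine. I would split into two cases depending on the vertex values. If $u_j = 0$ for every vertex $j$ of $T_i$, then $u_h|_{T_i} \equiv 0$ and also $w_{u,i} = 0$; this element contributes $0$ to $\int_{T_i}|u_h|_0\d\mu$ and $0$ to $\mu(T_i)|w_{u,i}|_0$. If, on the other hand, at least one vertex value is nonzero, then $w_{u,i} > 0$ so $|w_{u,i}|_0 = 1$, and $u_h|_{T_i}$ is a nontrivial affine function on $T_i$. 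Its zero set in $T_i$ is then contained in an affine hyperplane and therefore has $d$-dimensional Lebesgue measure zero, so $\mu(\{x \in T_i \mid u_h(x)\ne 0\}) = \mu(T_i)$. In both cases, the contribution from $T_i$ on the two sides agrees.

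Summing over $i=1,\dots,m$ and using that the boundaries of the simplices have measure zero yields
\[
\Vert u_h \Vert_0 = \sum_{i=1}^m \mu(T_i)\,|w_{u,i}|_0 = \Vert w_u \Vert_{0,\mu(\mathcal T)}.
\]

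The only nonroutine step is the observation that a nontrivial affine function on a $d$-simplex has a zero set of Lebesgue measure zero; this is standard (the zero set of a nonzero linear functional on $\R^d$ is a $(d-1)$-dimensional hyperplane), so I would just state it briefly rather than prove it from scratch. Everything else is bookkeeping on the mesh.
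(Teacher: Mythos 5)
Your argument is correct and is essentially the paper's own proof: the paper likewise works simplex by simplex, noting that $u_h$ vanishes a.e.\@ on $T_i$ or is nonzero a.e.\@ on $T_i$, with the former occurring exactly when all vertex values (hence $w_{u,i}$) vanish. You merely spell out the measure-zero-zero-set justification for the dichotomy, which the paper leaves implicit.
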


\begin{proof}
Let $T_i \in \mathcal T$ be a simplex.
Since $u_h$ is piecewise linear and continuous, it holds either $u_h = 0$ a.e.\@ on $T_i$ or $u_h \neq 0$ a.e.\@ on $T_i$. Moreover, $u_h = 0$ a.e.\@ on $T_i$ is the case if and only if $u_h = 0$ at all vertices of $T_i$, i.e., if $w_{u,i} = 0$. This shows the claim.
\end{proof}

Then the following  equivalent reformulation holds for the discretized $L^0$-constraint $\Vert u_h \Vert_0 \leq K$.

\begin{theorem}\label{Theo:DiscreteReform}
Let $u_h \in V_N$ and $K \in (0, \mu(\Om))$. Then it holds
\[
\Vert u_h \Vert_0 \leq K \quad \Leftrightarrow \quad \Vert w_u \Vert_{1, \mu(\mathcal{T})} - \vert w_u \vert_{K, \mu(\mathcal{T})} = 0.
\]
\end{theorem}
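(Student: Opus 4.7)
The plan is to reduce this to Theorem \ref{Theo:ReformSepa} applied to a suitable finite measure space. The vector $w_u \in \R^m$ can be regarded as a measurable function on the purely atomic (in particular $\sigma$-finite) measure space $(\{1,\dots,m\}, 2^{\{1,\dots,m\}}, \tilde\mu)$, where $\tilde\mu(\{i\}) := \mu(\mathcal T)_i$. Under this identification, the weighted norms $\Vert \cdot \Vert_{0,\mu(\mathcal T)}$, $\Vert \cdot \Vert_{1,\mu(\mathcal T)}$, and $\vert \cdot \vert_{K,\mu(\mathcal T)}$ are exactly the $L^0$-pseudo-norm, $L^1$-norm, and largest-$K$-norm on this measure space (with the parameter $K \in (0, \tilde\mu(\{1,\dots,m\})) = (0,\mu(\Om))$).

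Given this identification, I would first invoke \cref{Lem:L0refDis} to rewrite the left-hand condition as $\Vert w_u \Vert_{0,\mu(\mathcal T)} \le K$. Then I would apply \cref{Theo:ReformSepa}, specifically the equivalence (\ref{Reform_a}) $\Leftrightarrow$ (\ref{Reform_b}), on $(\{1,\dots,m\}, 2^{\{1,\dots,m\}}, \tilde\mu)$ with the function $w_u$, which yields
\[
\Vert w_u \Vert_{0, \mu(\mathcal{T})} \leq K \quad \Leftrightarrow \quad \Vert w_u \Vert_{1, \mu(\mathcal{T})} = \vert w_u \vert_{K, \mu(\mathcal{T})}.
\]
Chaining the two equivalences gives the claim.

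There is essentially no obstacle: \cref{Theo:ReformSepa} was proven for arbitrary $\sigma$-finite measure spaces (purely atomic included), and the entries of $w_u$ are already non-negative so $|w_u| = w_u$ causes no issue. The only point requiring a sentence of justification is that $K < \mu(\Om) = \tilde\mu(\{1,\dots,m\})$, so that \cref{Theo:ReformSepa} is indeed applicable with this $K$.
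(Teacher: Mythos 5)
Your proposal is correct and follows essentially the same route as the paper: the paper's proof likewise applies \cref{Lem:L0refDis} to pass to $\Vert w_u \Vert_{0,\mu(\mathcal T)} \le K$ and then invokes \cref{Theo:ReformSepa} on the purely atomic measure space $\{1,\dots,m\}$ with $\mu(\{i\})=\mu(\mathcal T)_i$, exactly as you describe. Your extra remark that $K<\tilde\mu(\{1,\dots,m\})=\mu(\Om)$ is a small but valid point of care that the paper leaves implicit.
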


\begin{proof}
Due to \cref{Lem:L0refDis}, it holds $\Vert u_h \Vert_0 \leq K$ if and only if $\Vert w_u \Vert_{0, \mu(\mathcal{T})} \le K$.
By \cref{Theo:ReformSepa}, this is equivalent to $\Vert w_u \Vert_{1, \mu(\mathcal{T})} - \vert w_u \vert_{K, \mu(\mathcal{T})} = 0$.
\end{proof}

Note that this result is based on the general $L^0$-reformulation \cref{Theo:ReformSepa} since the underlying measure space is purely atomic.

Let us define the following mesh-dependent approximations of the $L^1(\Om)$- and the largest-$K$-norm.

\begin{definition}\label{Def:Approx}
For every $u_h \in V_N$ define
\begin{align*}
\Vert u_h \Vert_{1,h} &:= \sum_{i=1}^N \vert u_i \vert \int_{\Om} \varphi_i \d\mu, \\
\vert u_h \vert_{K, h} &:= \max_{I \subseteq \{1,...,m\}: \sum_{i \in I} \mu(T_i) \leq K} \sum_{j=1}^N \vert u_j \vert \int_{\cup_{i \in I} T_i} \varphi_j \dm.
\end{align*}
\end{definition}

$\Vert u_h \Vert_{1,h}$ is a well-known  approximation for the $L^1(\Om)$-norm, see, e.g.,  \cite{Wachsmuth2010}.
These approximations connect to our reformulation in the following way.

\begin{theorem}\label{Theo:Approx}
For every $u_h \in V_N$ it holds
\begin{align*}
\Vert u_h \Vert_{1,h} &= \frac{1}{d+1} \Vert w_u \Vert_{1,\mu(\mathcal{T})}, \\
\vert u_h \vert_{K, h} &= \frac{1}{d+1} \vert w_u \vert_{K,\mu(\mathcal{T})}.
\end{align*}
\end{theorem}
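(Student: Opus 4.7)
The plan is to reduce both identities to a single explicit formula for the nodal-basis integrals on a simplex, and then swap the order of summation. The key fact I will use is that for $P_1$ nodal basis functions on a $d$-simplex $T_i$ one has
\[
\int_{T_i} \varphi_j \dm \;=\; \frac{\mu(T_i)}{d+1}\, D_{i,j}
\]
for every $j\in\{1,\dots,N\}$. This follows because $\varphi_j|_{T_i}=0$ when $j$ is not a vertex of $T_i$ (so $D_{i,j}=0$), and when $j$ is a vertex the $d+1$ hat functions associated with the vertices of $T_i$ sum to $1$ on $T_i$ and are permuted by the symmetry group of the simplex, forcing each of their integrals over $T_i$ to equal $\mu(T_i)/(d+1)$. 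Textbook derivation, but it is the only geometric input needed.

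For the first identity, I would substitute the integral formula into the definition of $\|u_h\|_{1,h}$ and interchange the sums:
\[
\|u_h\|_{1,h} \;=\; \sum_{j=1}^{N} |u_j|\int_\Omega \varphi_j \dm
\;=\; \sum_{j=1}^{N} |u_j|\sum_{i=1}^{m} \frac{\mu(T_i)}{d+1}\,D_{i,j}
\;=\; \frac{1}{d+1}\sum_{i=1}^{m} \mu(T_i)\, w_{u,i},
\]
using $w_{u,i}=\sum_{j}D_{i,j}|u_j|$ in the last step. The right-hand side is precisely $\tfrac{1}{d+1}\|w_u\|_{1,\mu(\mathcal T)}$.

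For the second identity, I fix an admissible index set $I\subseteq\{1,\dots,m\}$ with $\sum_{i\in I}\mu(T_i)\le K$. By the same integral formula and summation swap,
\[
\sum_{j=1}^{N}|u_j|\int_{\cup_{i\in I}T_i}\varphi_j\dm
\;=\; \sum_{j=1}^{N}|u_j|\sum_{i\in I}\frac{\mu(T_i)}{d+1}D_{i,j}
\;=\; \frac{1}{d+1}\sum_{i\in I}\mu(T_i)\,w_{u,i}.
\]
Since this equality holds termwise before the maximum is taken, and since the admissible index sets $I$ in the definition of $|u_h|_{K,h}$ and $|w_u|_{K,\mu(\mathcal T)}$ are exactly the same (both constrained by $\sum_{i\in I}\mu(T_i)\le K$, with $w_{u,i}\ge0$ so absolute values may be dropped), taking the max on both sides yields
$|u_h|_{K,h}=\tfrac{1}{d+1}|w_u|_{K,\mu(\mathcal T)}$.

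There is no real obstacle: the whole argument is just the integral formula for $P_1$ hat functions plus Fubini-style rearrangement. The only point that deserves a line of justification is that the supremum defining $|w_u|_{K,\mu(\mathcal T)}$ on the purely atomic space $(\{1,\dots,m\},\mu(\mathcal T))$ is in fact attained at some finite subset $I$, so that the argument above and the ``$\max$'' in \cref{Def:Approx} refer to the same quantity; this is immediate because there are only finitely many candidate subsets.
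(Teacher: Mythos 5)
Your proposal is correct and follows essentially the same route as the paper: both rest on the identity $\int_{T_i}\varphi_j\dm=\frac{1}{d+1}D_{i,j}\mu(T_i)$ (the paper justifies it via the volume of a hyper-pyramid, you via an affine-symmetry argument, both standard) followed by interchanging the sums over $i$ and $j$. The only cosmetic difference is that you prove the $\Vert\cdot\Vert_{1,h}$ identity directly, whereas the paper obtains it as the special case $K=\mu(\Om)$ of the largest-$K$ identity.
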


\begin{proof}
If $j$ is a vertex of $T_i$, the integral $\int_{T_i} \varphi_j \dm$ equals the $d+1$-dimensional volume of a hyper-pyramid with base $T_i$ and height $1$.
Therefore, we get
\[\begin{split}
\int_{T_i} \varphi_j \dm
&= \begin{cases}
\frac{1}{d+1} \mu(T_i) & \text{ if } j \text{ is a vertex of } T_i, \\
0 & \text{ otherwise},
\end{cases} \\
&= \frac{1}{d+1} D_{i,j} \mu(T_i).
\end{split}\]
Let $I \subset \{1,...,m\}$. Then we get
\[
 \sum_{j=1}^N \vert u_j \vert \sum_{i \in I} D_{i,j} \mu(T_i)
 = \sum_{i \in I} \mu(T_i) \sum_{j=1}^N D_{i,j} \vert u_j \vert
 =\sum_{i \in I} \mu(T_i) w_{u,i} .
 \]
This yields
\begin{align*}
\vert u_h \vert_{K, h} &= \frac{1}{d+1} \max_{I \subseteq \{1,...,m\}: \sum_{i \in I} \mu(T_i) \leq K} \sum_{j=1}^N \vert u_j \vert \sum_{i \in I} D_{i,j} \mu(T_i) \\
&= \frac{1}{d+1} \max_{I \subseteq \{1,...,m\}: \sum_{i \in I} \mu(T_i) \leq K} \sum_{i \in I} \mu(T_i) w_{u,i} = \frac{1}{d+1} \vert w_u \vert_{K,\mu(\mathcal{T})}.
\end{align*}
Setting $K = \mu(\Om)$ proves the claim for $\Vert u_h \Vert_{1,h}$.
\end{proof}

Applying \cref{Theo:DiscreteReform} to the finite element discretization
\begin{align} \label{DisProblem}
\min_{u_h \in V_N} \frac{1}{2} \Vert \nabla u_h \Vert_{L^2(\Om)}^2 - \int_{\Om} g u_h \dm \quad \text{subject to} \quad \Vert u_h \Vert_0 \leq K
\end{align}
of problem \eqref{Problem} results in the finite dimensional problem
\[
\min_{u \in \R^N} \frac{1}{2} u^T A u - b^T u \quad \text{subject to} \quad \Vert w_u \Vert_{1,\mu(\mathcal{T})} - \vert w_u \vert_{K,\mu(\mathcal{T})} = 0.
\]
Here, $A$ and $b$ denote the stiffness matrix and load vector of the finite element discretization.
We are interested in solving the associated penalized problem
\begin{align} \label{PenProblem}
\min_{u \in \R^N} \frac{1}{2} u^T A u - b^T u + \rho \left(\Vert w_u \Vert_{1,\mu(\mathcal{T})} - \vert w_u \vert_{K,\mu(\mathcal{T})}\right)
\end{align}
for large $\rho > 0$.
Note that this problem is always globally solvable.
The exact penalty results \cite[Theorem 4, Corollary 4]{Gotoh2017}, see also \cref{cor_exact_penalty} and \cref{Remark:Exact_Pen}, can be adapted to the weighted norms in \eqref{PenProblem}:
There is $\rho^* > 0$ such that for all $\rho > \rho^*$ any solution of the penalized problem \eqref{PenProblem} is a solution of the original discretized problem \eqref{DisProblem}.
Unfortunately, $\rho^*$ depends on the discretization, and $\rho^*\to \infty$ for $h\searrow0$, where $h$ is the mesh-size.

\subsection{Convex Subdifferentials in Finite Dimensions}

Solving the penalized problem \eqref{PenProblem} by a DC-Algorithm requires the computation of a subgradient of the function $u \mapsto \vert w_u \vert_{K, \mu(\mathcal{T})}$.
The following results give  explicit characterizations of the necessary subdifferentials, compare also \cref{Theo:GeneralSubdiff}.

\begin{lemma}\label{lemmaSubdiff}
Let $K \geq 0$, $\lambda \in \R_{>0}^n$ and $x \in \R^n$.
Moreover, let $I^*$ be an optimal set for the maximization problem in $\vert x \vert_{K,\lambda}$ and $I_0^* := \{ i \in I^* \mid x_i = 0 \}$.
Then it holds
\[
\partial \vert \cdot \vert_{K, \lambda}(x) \supseteq \left\lbrace s \in \R^n \mid s_i
\begin{cases}
= \lambda_i\sign(x_i) & \text{ if } i \in I^* \setminus I_0^*, \\
\in [-\lambda_i , \lambda_i] & \text{ if } i \in I_0^*, \\
= 0 & \text{ if } i \notin I^*.
\end{cases}
\right\rbrace.
\]
\end{lemma}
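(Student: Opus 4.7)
The plan is to apply \cref{lem:Subdiff}: since $\vert \cdot \vert_{K,\lambda}$ is convex and positively homogeneous (both facts are immediate from its defining maximization), the characterization
\[
\partial \vert \cdot \vert_{K,\lambda}(x) = \left\lbrace s \in \partial \vert \cdot \vert_{K,\lambda}(0) \;\middle|\; \langle s, x \rangle = \vert x \vert_{K,\lambda} \right\rbrace
\]
applies. Thus for each $s$ in the set on the right-hand side of the claimed inclusion, I only need to verify two things: that $s$ attains the value $\vert x \vert_{K,\lambda}$ when paired with $x$, and that $s \in \partial \vert \cdot \vert_{K,\lambda}(0)$, i.e., $\langle s, y \rangle \le \vert y \vert_{K,\lambda}$ for every $y \in \R^n$.

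First I would handle the equality $\langle s, x \rangle = \vert x \vert_{K,\lambda}$ by a direct index-by-index computation. For $i \in I^* \setminus I_0^*$ one has $s_i x_i = \lambda_i \sign(x_i) x_i = \lambda_i \vert x_i \vert$; for $i \in I_0^*$ the factor $x_i = 0$ kills the contribution; for $i \notin I^*$ the factor $s_i = 0$ does. Summing gives
\[
\langle s, x \rangle = \sum_{i \in I^*} \lambda_i \vert x_i \vert = \vert x \vert_{K,\lambda},
\]
where the last equality uses that $I^*$ is an optimal index set.

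Next I would verify $s \in \partial \vert \cdot \vert_{K,\lambda}(0)$. The two structural facts I need are (a) $\vert s_i \vert \le \lambda_i$ for every $i$, and (b) $\supp(s) \subseteq I^*$. Property (a) holds by case distinction (equality on $I^* \setminus I_0^*$, the explicit interval on $I_0^*$, zero elsewhere), and (b) is immediate from the definition. Feasibility of $I^*$ in the max defining $\vert x \vert_{K,\lambda}$ gives $\sum_{i \in I^*} \lambda_i \le K$. For arbitrary $y \in \R^n$,
\[
\langle s, y \rangle = \sum_{i \in I^*} s_i y_i \le \sum_{i \in I^*} \lambda_i \vert y_i \vert \le \vert y \vert_{K,\lambda},
\]
where the final inequality uses that $I^*$ is itself a feasible set in the maximization defining $\vert y \vert_{K,\lambda}$.

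There isn't really a hard step here; the proof is essentially bookkeeping once one recognizes that positive homogeneity reduces the problem to these two checks. The only mildly delicate point is remembering that optimality of $I^*$ is used for the \emph{value} $\vert x \vert_{K,\lambda}$ but only \emph{feasibility} of $I^*$ is needed to bound $\langle s, y \rangle$ against $\vert y \vert_{K,\lambda}$; both are free from the hypotheses.
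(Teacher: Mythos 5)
Your proposal is correct and is essentially the paper's own argument: the paper proves exactly the same two facts, namely the chain $s^T y \le \sum_{i\in I^*}|s_i||y_i| \le \sum_{i\in I^*}\lambda_i|y_i| \le |y|_{K,\lambda}$ (using feasibility of $I^*$) together with $s^T x = \sum_{i\in I^*}\lambda_i|x_i| = |x|_{K,\lambda}$ (using optimality of $I^*$), and concludes; your explicit appeal to \cref{lem:Subdiff} just makes the final subtraction step visible.
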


\begin{proof}
Let $s$ be in the set on the right-hand side.
Then for any vector $y \in \mathbb{R}^n$ it holds
\[
s^T y = \sum_{i=1}^n s_i y_i \leq \sum_{i \in I^*} \vert s_i \vert \vert y_i \vert \leq \sum_{i \in I^*} \lambda_i \vert y_i \vert \leq \vert y \vert_{K, \lambda}, \quad s^T x = \sum_{i \in I^*} \lambda_i \vert x_i \vert = \vert x \vert_{K, \lambda},
\]
which shows the claim.
\end{proof}
For a complete characterization of the subdifferential of the finite-dimensional largest-$K$-norm see \cite[Theorem 3]{Watson1992}.

\begin{theorem}\label{theoSubdiff}
Let $K \geq 0$, $\lambda \in \R_{>0}^m$ and $u \in \R^N$. Then
\[
\partial \vert w_u \vert_{K, \lambda}(u) = \{ (\rho_i s_i)_i \in \R^N \mid \rho = D^T r,\: r \in \partial \vert \cdot \vert_{K, \lambda}(w_u), \: s_i \in \partial\vert \cdot \vert(u_i) \:\forall i \}.
\]
\end{theorem}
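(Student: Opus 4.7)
Set $h(u) := \vert w_u\vert_{K,\lambda}$ and denote by $S(u)$ the set on the right-hand side of the claim. I plan to establish the two inclusions separately, exploiting three structural features: the positive homogeneity of $h$ (inherited from $\vert\cdot\vert_{K,\lambda}$ and the coordinatewise absolute value), the sign-invariance of $\vert\cdot\vert_{K,\lambda}$ (so $r\in\partial\vert\cdot\vert_{K,\lambda}(0)$ forces $\vert r\vert\in\partial\vert\cdot\vert_{K,\lambda}(0)$), and the non-negativity of the entries of $D$.

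The inclusion $\supseteq$ will be handled directly via the subgradient inequality. Given $p_j=\rho_j s_j$ with $\rho=D^T r$, $r\in\partial\vert\cdot\vert_{K,\lambda}(w_u)$ and $s_j\in\partial\vert\cdot\vert(u_j)$, the goal is $h(v)-h(u)\ge p^T(v-u)$. Using $s_j u_j=\vert u_j\vert$, the first step gives $p^T u=\rho^T\vert u\vert=r^T D\vert u\vert=r^T w_u$, which by \cref{lem:Subdiff} applied to the positively homogeneous $\vert\cdot\vert_{K,\lambda}$ equals $h(u)$. Using $\vert s_j\vert\le 1$ together with $D\ge 0$, the second step yields the chain $p^T v\le \vert\rho\vert^T\vert v\vert\le (D^T\vert r\vert)^T\vert v\vert=\vert r\vert^T w_v\le h(v)$, where the last inequality invokes $\vert r\vert\in\partial\vert\cdot\vert_{K,\lambda}(0)$ via the sign-invariance. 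Subtracting the two relations gives $p\in\partial h(u)$.

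For the reverse inclusion $\subseteq$, I first reduce to $\partial h(0)$ via \cref{lem:Subdiff}, which identifies $\partial h(u)=\{p\in\partial h(0)\mid p^T u=h(u)\}$. Then I describe $\partial h(0)$ through the support-function representation
\[
h(v)=\sup_{r\in\partial\vert\cdot\vert_{K,\lambda}(0)} r^T D\vert v\vert=\sup_{r,\,s\in[-1,1]^N}\langle (D^T r)\cdot s,\,v\rangle,
\]
where the second equality uses $\vert v_j\vert=\max_{s_j\in[-1,1]}s_j v_j$, and restricting without loss to $r$ with $D^T r\ge 0$ (by replacing $r$ with $\vert r\vert$, using the sign-invariance again) justifies interchanging the two suprema.

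The main obstacle I foresee is showing that the set $S_0:=\{(D^T r)\cdot s \mid r\in\partial\vert\cdot\vert_{K,\lambda}(0),\,s\in[-1,1]^N\}$ is already convex and closed, so that the support-function formula above forces $\partial h(0)=S_0$ without a further convex-hull operation. Convexity should come from the scaling and sign freedom available in both factors, but this is the one step I expect to require genuine work. Once it is in place, imposing the tightness condition $p^T u=h(u)$ and applying \cref{lem:Subdiff} once to $\vert\cdot\vert_{K,\lambda}$ and once coordinatewise to $\vert\cdot\vert$ pins down a representer with $r\in\partial\vert\cdot\vert_{K,\lambda}(w_u)$ and $s_j\in\partial\vert\cdot\vert(u_j)$, completing $\partial h(u)\subseteq S(u)$.
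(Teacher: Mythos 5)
Your inclusion ``$\supseteq$'' is complete and correct: $p^Tu=\rho^T\vert u\vert=r^Tw_u=\vert w_u\vert_{K,\lambda}$ via \cref{lem:Subdiff}, and the chain $p^Tv\le\vert r\vert^Tw_v\le\vert w_v\vert_{K,\lambda}$ using $D\ge0$ and the fact that $\vert\cdot\vert_{K,\lambda}$ depends only on the componentwise absolute value of its argument, together yield the subgradient inequality. The reverse inclusion, however, is not proved: you have correctly located the crux --- from the support-function identity $h(v)=\sup_{p\in S_0}\langle p,v\rangle$ one only gets $\partial h(0)=\overline{\conv}(S_0)$, and as long as convexity and closedness of $S_0$ are merely ``expected to require genuine work'', the argument has a genuine gap, since the entire content of ``$\subseteq$'' sits in that step. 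The gap can be closed. Restrict to $r\ge0$ (replacing $r$ by $\vert r\vert$ keeps it in $\partial\vert\cdot\vert_{K,\lambda}(0)$ and does not decrease $r^TD\vert v\vert$); then $A:=D^T\bigl(\partial\vert\cdot\vert_{K,\lambda}(0)\cap\R^m_{\ge0}\bigr)\subseteq\R^N_{\ge0}$ is convex and compact, and the corresponding set $S_0'$ is exactly $\{c\in\R^N\mid \vert c\vert\le a \text{ componentwise for some } a\in A\}$. Convexity follows from $\vert\lambda c^1+(1-\lambda)c^2\vert\le\lambda\vert c^1\vert+(1-\lambda)\vert c^2\vert\le\lambda a^1+(1-\lambda)a^2\in A$, closedness from compactness of $A$; the sandwich $S_0'\subseteq S_0\subseteq\partial h(0)=\overline{\conv}(S_0')=S_0'$ (the middle inclusion being your first half) then forces equality everywhere. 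In the final tightness step note also that on indices with $(D^Tr)_j=0$ and $u_j\ne0$ the condition $p^Tu=h(u)$ gives no information about $s_j$, so you must redefine $s_j:=\sign(u_j)$ there by hand (which does not change $p$).

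For comparison, the paper's proof is entirely different and much shorter: it writes $\vert w_u\vert_{K,\lambda}=(g_K\circ F)(u)$ with $g_K(x)=\vert Dx\vert_{K,\lambda}$ convex and componentwise nondecreasing on $\R^N_{\ge0}$ and $F(x)=\vert x\vert$ with convex components, and invokes the chain rule \cite[Theorem 4.3.1]{Hiriart2001} together with the affine pre-composition rule $\partial g_K(x)=D^T\partial\vert\cdot\vert_{K,\lambda}(Dx)$. That delegates precisely the convexity issue you are worried about to the cited theorem; your route is self-contained and makes the geometry of $\partial h(0)$ explicit, but must pay for it with the order-interval argument above.
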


\begin{proof}
Define
\begin{align*}
g_K: \R^N \to \R, \quad &x \mapsto \vert Dx \vert_{K, \lambda},\\
f_i: \R^N \to \R, \quad &x \mapsto \vert x_i \vert \qquad \text{for every } i \in \{1,...,N\}, \\
F: \R^N \to \R^N, \quad &x \mapsto (f_1(x), ..., f_N(x))^T = \vert x \vert.
\end{align*}
Then $g_K$ is convex and componentwise non-decreasing on $\R_{\geq 0}^N$ and $f_i$ is convex for every $i \in \{1,...,N\}$.
Moreover, it holds $(g_K \circ F)(u) = \vert w_u \vert_{K, \lambda}$ for every $u \in \R^N$.
Therefore, it follows by the chain rule for the convex subdifferential \cite[see][Theorem 4.3.1]{Hiriart2001} that
\[
\partial \vert w_u \vert_{K, \lambda}(u) = \left\lbrace \sum_{i=1}^N \rho_i s^i \mid (\rho_1,...,\rho_N)^T \in \partial g_K(F(u)),\: s^i \in \partial f_i(u) \:\forall i \in \{1,...,N\} \right\rbrace.
\]
Furthermore, we have
\[
\partial g_K(x) = D^T \partial \vert \cdot \vert_{K, \lambda}(Dx)
\]
for every $x \in \R^N$ by \cite[Theorem 4.2.1]{Hiriart2001}. Obviously, it holds
\[
\partial f_i(u) = \left\lbrace s \in \R^N \mid s_i \in \partial\vert \cdot \vert(u_i), \: s_j = 0 \text{ for all } j \neq i \right\rbrace \text{ for every } i \in \{1,...,N\},
\]
which finishes the proof.
\end{proof}

Using \cref{lemmaSubdiff} to determine $r \in \partial \vert \cdot \vert_{K, \lambda}(w_u)$, this result provides a subgradient $s \in \partial \vert w_u \vert_{K, \lambda}(u)$ for the DC-Algorithm.
However, to be able to use \cref{lemmaSubdiff} numerically, one has to determine a set $I^* \subseteq \{1,...,n\}$, which attains the maximum in
\[
\max_{I \subseteq \{1,...,n\}: \sum_{i \in I} \lambda_i \leq K} \sum_{i \in I} \lambda_i \vert x_i \vert.
\]
Such problems are known as knapsack problems, see, e.g., \cite{Martello1990}.
In our implementation we used a greedy algorithm, cf. \cite[Section 2.4]{Martello1990}.

To solve the subproblem \eqref{DC-subproblem} of the DC-Algorithm we also need the convex sub\-differential of the function $u \mapsto \Vert w_u \Vert_{1, \mu(\mathcal{T})}$.

\begin{theorem}\label{CorSubdiff}
Let $u \in \R^N$. Then
\[
\partial \Vert w_u \Vert_{1, \mu(\mathcal{T})}(u) = \left\lbrace s \in \R^N \mid s_i \in \mu(\triangle_i) \partial \vert \cdot \vert(u_i)
\ \forall i \in \{1,...,N\} \right\rbrace,
\]
where $\triangle_i$ is the union of all simplices in the mesh $\mathcal{T}$ of which the node $i$ is a vertex, i.e., $\mu(\triangle_i) = \sum_{j=1}^m  D_{j,i} \mu(T_j)$.
\end{theorem}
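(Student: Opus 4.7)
The plan is to reduce the claim to a purely separable one-dimensional calculation. The starting observation is that $\Vert w_u \Vert_{1, \mu(\mathcal T)}$ can be rewritten as a weighted sum of $|u_j|$. Using the definitions,
\[
\Vert w_u \Vert_{1, \mu(\mathcal T)} = \sum_{i=1}^m \mu(T_i) w_{u,i} = \sum_{i=1}^m \mu(T_i) \sum_{j=1}^N D_{i,j}|u_j| = \sum_{j=1}^N \mu(\triangle_j)|u_j|,
\]
where the last equality uses exactly the identity $\mu(\triangle_j) = \sum_{i=1}^m D_{i,j}\mu(T_i)$ stated in the theorem. Thus the function to be subdifferentiated is a finite sum of convex functions, each of which depends on only one coordinate of $u$.

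Next, I would invoke the Moreau–Rockafellar sum rule, which applies without qualification because each summand is convex and finite-valued on all of $\R^N$ (in particular, the relative interiors of the domains intersect trivially). This yields
\[
\partial \Vert w_u \Vert_{1, \mu(\mathcal T)}(u) = \sum_{j=1}^N \partial\bigl(u \mapsto \mu(\triangle_j)|u_j|\bigr)(u),
\]
where the sum is a Minkowski sum of subsets of $\R^N$. For each fixed $j$, the function $u \mapsto \mu(\triangle_j)|u_j|$ is the composition of the coordinate projection $u \mapsto u_j$ with the scaled absolute value $t \mapsto \mu(\triangle_j)|t|$, so its subdifferential at $u$ is the set of all $s \in \R^N$ with $s_j \in \mu(\triangle_j)\,\partial|\cdot|(u_j)$ and $s_k = 0$ for $k \ne j$; positive homogeneity and $\mu(\triangle_j)\ge 0$ justify pulling the scalar outside the subdifferential.

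Finally, I would take the Minkowski sum of these one-dimensional subdifferentials: adding $N$ such sets, each supported on a different coordinate, produces exactly the set of all vectors $s$ whose $j$-th coordinate lies in $\mu(\triangle_j)\,\partial|\cdot|(u_j)$ for every $j$. This matches the claimed formula. There is no genuine obstacle — the only point requiring care is confirming that $\mu(\triangle_j)\ge 0$ so the scaling is admissible, and that the Moreau–Rockafellar rule applies (which is automatic for finite-valued convex functions on $\R^N$); the rest is bookkeeping.
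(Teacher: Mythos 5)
Your proposal is correct and follows essentially the same route as the paper: both rewrite $\Vert w_u \Vert_{1,\mu(\mathcal T)}$ as the weighted $\ell^1$-norm $\sum_{j=1}^N \mu(\triangle_j)\vert u_j\vert$ by swapping the order of summation, after which the separable structure gives the claimed subdifferential. The paper simply asserts this last step, whereas you spell it out via the Moreau--Rockafellar sum rule; that is standard bookkeeping and introduces no gap.
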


\begin{proof}
It holds
\[
\Vert w_u \Vert_{1, \mu(\mathcal{T})} = \sum_{i=1}^m \mu(T_i) \sum_{j=1}^N D_{i,j} \vert u_j \vert = \sum_{j=1}^N \vert u_j \vert \sum_{i=1}^m  D_{i,j} \mu(T_i) = \sum_{j=1}^N \vert u_j \vert \mu(\triangle_j).
\]
Hence, $\Vert w_u \Vert_{1, \mu(\mathcal{T})}$ equals a weighted $\ell^1$-norm of $u$, which proves the claim.
\end{proof}

\subsection{Algorithms}

The explicit DC-Algorithm for problem \eqref{PenProblem} is:

\begin{algorithm} [DC-Algorithm for problem \eqref{PenProblem}] \label{DC-Algorithm}
\begin{itemize}
\item[]
\item[\text{(S.0)}] Choose $u^0 \in \R^N$ and set $k := 0$.
\item[\text{(S.1)}] Define $s^k := \rho s$ with $s \in \partial \vert w_u \vert_{K, \mu(\mathcal{T})}(u^k)$ and determine $u^{k+1}$ as a solution to
\begin{align} \label{DC-subproblem}
\min_{u \in \R^N} \frac{1}{2} u^T A u - b^T u + \rho \Vert w_u \Vert_{1, \mu(\mathcal{T})} - (s^k)^T u.
\end{align}
\item[\text{(S.2)}] If a suitable termination criterion, e.g., $u^k = u^{k+1}$, is satisfied: STOP.
\item[\text{(S.3)}] Set $k \leftarrow k+1$ and go to (S.1).
\end{itemize}
\end{algorithm}

Analogously to \cite{Ulbrich2014}, the necessary optimality condition of \eqref{DC-subproblem} can be formulated as
\begin{equation}\label{eq_F_tau}
F_\tau(u) := g(u) - \mathcal{P}_{[-\rho\mu(\triangle), \rho\mu(\triangle)]}\left(g(u) - \frac{1}{\tau} u \right) = 0 , \quad \tau > 0,
\end{equation}
where $g(u) := Au - b - s^k$ and $\mathcal{P}_{[-\rho\mu(\triangle), \rho\mu(\triangle)]}$ denotes the elementwise projection onto $[-\rho\mu(\triangle_i), \rho\mu(\triangle_i)]$.
For solving the DC subproblem \eqref{DC-subproblem} we use a local semismooth Newton method applied to the equation $F_\tau(u) = 0$.

We determine the subgradient $s \in \partial \vert w_u \vert_{K, \mu(\mathcal{T})}(u^k)$ in step (S.1) of \cref{DC-Algorithm} in the following way.

\begin{algorithm}\label{Alg:Subgradient}
\begin{itemize}
\item[]
\item[\text{(S.1)}] Determine the set $I^* \subseteq \{1,...,m\}$, which attains the maximum in $\vert w_{u^k} \vert_{K, \mu(\mathcal{T})}$.
\item[\text{(S.2)}] Define $r \in \R^m$ by $r_i := \mu(T_i)$ for $i \in I^*$ and $r_i := 0$ otherwise.
\item[\text{(S.3)}] Define $a \in \R^N$ by $a_i := \sign(u^k_i)$.
\item[\text{(S.4)}] Compute $s := (D^T r) \star a$, where $\star$ denotes elementwise multiplication.
\end{itemize}
\end{algorithm}

By \cref{lemmaSubdiff} and \cref{theoSubdiff}, the vector $s$ from step (S.4) then satisfies $s \in \partial \vert w_u \vert_{K, \mu(\mathcal{T})}(u^k)$.
As already mentioned, in our implementation the set $I^*$ is approximated by a greedy algorithm for knapsack problems, see \cite[Section 2.4]{Martello1990}.

\subsection{Numerical Experiments}\label{section:NumExperiments}

Let us report about numerical experiments for the solution of \eqref{Problem}.
For our experiments we choose $\Om := (0,1)^2$ and the measure space $(\Om, \mathcal{A}, \mu)$ to be the standard Lebesgue-space.
For the parameter function $g$ we use
\[
g(x,y) := 10x \sin(5x) \sin(7y).
\]
In accordance with \cref{section:DiscPiecewiseLin} we discretize and reformulate this problem by means of a standard triangular finite element method with continuous piecewise linear functions, which results in the penalized problem \eqref{PenProblem} to be solved.
All the numerical experiments were implemented in Python.
The mesh was generated using the package \textit{gmsh} \cite{gmsh} and the finite-element assembly was done using the \textit{scikit-fem} library \cite{skfem2020}.

If not mentioned otherwise, we initialize our DC-Algorithm, \cref{DC-Algorithm}, with $u^0 = A^{-1} b$, i.e., the solution of the unconstrained problem.
Linear systems of equations are solved by the function \textit{spsolve} from the python library \textit{scipy}.
Moreover, if not mentioned otherwise, we use
\[
K = 0.25, \quad \rho = 10^9, \quad h = \frac{1}{512},
\]
where $h$ is the target mesh size for the mesh-generator.
For the actual mesh sizes, i.e. $\max_{i \in \{1,...,m\}} \operatorname{diam}(T_i)$, and the degrees of freedom see \cref{Table:mesh_sizes}.

\begin{table}
\tbl{Actual mesh sizes and degrees of freedom.}
{\begin{tabular}{lccccccccc} \toprule
$1/h$     & 8 & 16 & 32 & 64 & 128 & 256 & 512 & 1024 & 2048 \\ \midrule
mesh size & $1.5 \cdot 10^{-1}$ & $8.3 \cdot 10^{-2}$ & $4.0 \cdot 10^{-2}$ & $1.9 \cdot 10^{-2}$ & $1.0 \cdot 10^{-2}$ & $5.1 \cdot 10^{-3}$ & $2.6 \cdot 10^{-3}$ & $1.3 \cdot 10^{-3}$ & $6.5 \cdot 10^{-4}$  \\
d.o.f.    & 66 & 276 & 1137 & 4631 & 18730 & 75355 & 302227 & 1209308 & 4840241 \\  \bottomrule
\end{tabular}}
\label{Table:mesh_sizes}
\end{table}

In general, the algorithm does not produce results which are exactly $0$ outside of what one would consider the support of the solution.
With respect to the $L^0$-pseudo-norm of a computed solution, we therefore interpret every value $\leq 10^{-10}$ as $0$.

We stop \cref{DC-Algorithm}  if $u^{k+1} = u^k$.
Surprisingly, this stopping criterion is always met in our numerical experiments.
To determine $\vert w_{u^k} \vert_{K,\mu(\mathcal{T})}$ as well as a subgradient $s$ in step (S.1) of \cref{DC-Algorithm}, a suitable index set for the weighted largest-$K$-norm, see \cref{Alg:Subgradient} (S.1), is determined by a greedy algorithm for knapsack problems, see \cite{Martello1990}.

The nonsmooth equation \eqref{eq_F_tau} is solved by a semismooth Newton method.
To incorporate the mesh-size as well as the size of the penalty parameter into the local subproblem we set
\[
\tau := \frac{100 \max_j(\vert u_j \vert)}{\rho \max_i(\mu(\triangle_i))}.
\]
The semismooth Newton method is stopped if $\Vert F_\tau(u) \Vert_2 \leq 10^{-14}$.

In the following we report about the influence of different parameters on the behavior of the algorithm.
In particular we compare the computed results with respect to functional value $f(u^*)$, the $L^0$-pseudo-norm $\Vert u^* \Vert_0 = \sum_{i=1}^m \mu(T_i) \vert \max(w_{u,i} - 10^{-10}, 0) \vert_0$, the feasability (feas.) $\Vert w_u \Vert_{1, \mu(\mathcal{T})} - \vert w_u \vert_{K, \mu(\mathcal{T})}$ and the total number of outer DC iterations (DC) and inner semismooth Newton iterations (ssN).

\subsubsection{Dependence on Level of Discretization}\label{Num_section:Disc}

\begin{table}
\tbl{Numerical results for \cref{Num_section:Disc}.}
{\begin{tabular}{lccccc} \toprule
$1/h$ & $f(u_h^*)$ & $\Vert u_h^* \Vert_0$ & feas. & DC & ssN \\ \midrule
8    & -0.0020 & 0.161 & $1.3 \cdot 10^{-17}$ & 4 & 4 \\
16   & -0.0058 & 0.235 & $1.2 \cdot 10^{-17}$ & 4 & 4  \\
32   & -0.0075 & 0.247 & $1.0 \cdot 10^{-17}$ & 4 & 4 \\
64   & -0.0083 & 0.249 & $-1.6 \cdot 10^{-17}$ & 4 & 4  \\
128  & -0.0087 & 0.249 & $-3.5 \cdot 10^{-17}$ & 4 & 4  \\
256  & -0.0088 & 0.250 & $-3.5 \cdot 10^{-18}$ & 5 & 5  \\
512  & -0.0089 & 0.250 & $-9.7 \cdot 10^{-17}$ & 5 & 5  \\
1024 & -0.0089 & 0.250 & $-1.4 \cdot 10^{-16}$ & 5 & 5 \\
2048 & -0.0089 & 0.250 & $1.2 \cdot 10^{-15}$ & 6 & 7 \\ \bottomrule
\end{tabular}}
\label{Table:Disc}
\end{table}

\begin{figure}
\centering
\subfloat[computed solution]{
\resizebox*{6cm}{!}{\includegraphics{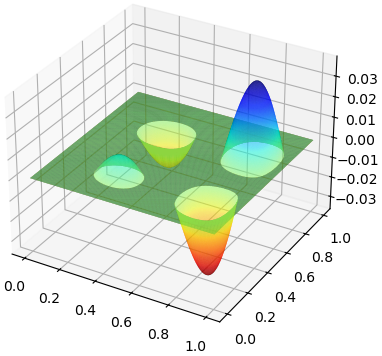}}}\hspace{5pt}
\subfloat[computed multiplier]{
\resizebox*{6cm}{!}{\includegraphics{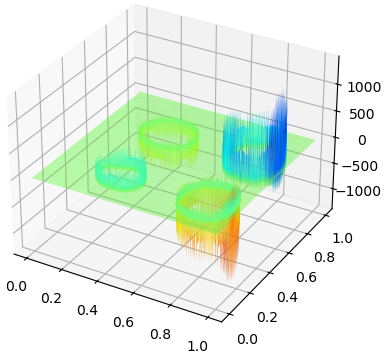}}}
\caption{Computed solution and multiplier for $1/h = 512$.}
\label{Fig:Solution512}
\end{figure}

The numerical results for different levels of discretization, i.e., different target mesh sizes $h$, can be seen in \cref{Table:Disc}.
The functional value $f(u_h^*)$ seems to converge for $h \searrow 0$, while the computed solution $u_h^*$ always satisfies the original constraint $\Vert u_h^* \Vert_0 \leq K = 0.25$.
The total number of inner semismooth Newton (ssN) and outer (DC) iterations only increases slightly across different values of $h$ and is overall small.

\subsubsection{Dependence on Penalty Parameter}\label{Num_section:Pen}

\begin{table}
\tbl{Numerical results for \cref{Num_section:Pen}.}
{\begin{tabular}{lccccc} \toprule
$\rho$    & $f(u_\rho^*)$ & $\Vert u_\rho^* \Vert_0$ & feas. & DC & ssN \\ \midrule
$10^3$    & -0.0089 & 0.250 & $-4.0 \cdot 10^{-17}$ & 5 & 6 \\
$10^4$    & -0.0089 & 0.250 & $3.1 \cdot 10^{-17}$ & 5 & 4  \\
$10^5$    & -0.0089 & 0.250 & $1.9 \cdot 10^{-16}$ & 4 & 3 \\
$10^6$    & -0.0089 & 0.250 & $1.7 \cdot 10^{-17}$ & 4 & 3  \\
$10^7$    & -0.0089 & 0.250 & $1.2 \cdot 10^{-16}$ & 5 & 5  \\
$10^8$    & -0.0089 & 0.250 & $-2.7 \cdot 10^{-16}$ & 5 & 5  \\
$10^9$    & -0.0089 & 0.250 & $-9.7 \cdot 10^{-17}$ & 5 & 5  \\
$10^{10}$ & -0.0089 & 0.250 & $-6.9 \cdot 10^{-18}$ & 5 & 5  \\
$10^{11}$ & -0.0089 & 0.250 & $-3.3 \cdot 10^{-17}$ & 5 & 5  \\
$10^{12}$ & -0.0088 & 0.250 & $-8.0 \cdot 10^{-17}$ & 5 & 5  \\
$10^{13}$ & -0.0089 & 0.250 & $5.7 \cdot 10^{-17}$ & 5 & 5  \\
$10^{14}$ & -0.0089 & 0.250 & $-2.9 \cdot 10^{-17}$ & 5 & 5  \\
$10^{15}$ & -0.0088 & 0.250 & $-1.7 \cdot 10^{-17}$ & 4 & 4  \\
$10^{16}$ & -0.0089 & 0.250 & $-2.4 \cdot 10^{-16}$ & 5 & 5  \\
$10^{17}$ & -0.0089 & 0.250 & $-1.9 \cdot 10^{-16}$ & 5 & 5  \\
$10^{18}$ & -0.0088 & 0.249 & $-1.2 \cdot 10^{-16}$ & 4 & 4  \\
$10^{19}$ & -0.0088 & 0.249 & $5.7 \cdot 10^{-17}$ & 4 & 4  \\ \bottomrule
\end{tabular}}
\label{Table:Pen}
\end{table}

The numerical results for different values of the penalty parameter $\rho$ can be seen in \cref{Table:Pen}.
The penalty parameter $\rho$ does not seem to have a large influence on the quality of the computed solution.
For $\rho = 10^{20}$ the python solver \textit{spsolve}, which is used to solve the Newton equation, breaks.

\subsubsection{Optimality Conditions}

Even though $\langle f^\prime(u^*), u^* \rangle = 0$ is not exactly satisfied, cf. \cref{Cor:Critical}, we have that $\langle f^\prime(u^*), u^* \rangle = (A u^* - b)^T u^* \approx 10^{-16}$.
Moreover, the multiplier, see \cref{Fig:Solution512}, seems to satisfy the necessary optimality conditions from \cref{cor_optimality_cond} or \cref{cor_optimality_cond_atomfree} respectively.

\subsubsection{Schedule for $K$} \label{Num_section:Sched}

\begin{table}
\tbl{Numerical results for \cref{Num_section:Sched}.}
{\begin{tabular}{llcccccc} \toprule
$1/h$ & $\lambda$ & $f(u_\lambda^*)$ & $\Vert u_\lambda^* \Vert_0$ & feas. & DC & ssN & $K$ \\ \midrule
128   & $0.90$    & -0.0072 & 0.250 & $-1.7 \cdot 10^{-18}$ & 16 & 28 & 14 \\
128   & $0.91$    & -0.0069 & 0.248 & $8.7 \cdot 10^{-18}$ & 18 & 30 & 15  \\
128   & $0.92$    & -0.0071 & 0.248 & $4.0 \cdot 10^{-17}$ & 20 & 31 & 17  \\
128   & $0.93$    & -0.0085 & 0.249 & $-2.4 \cdot 10^{-17}$ & 22 & 30 & 20 \\
128   & $0.94$    & -0.0111 & 0.249 & $-6.8 \cdot 10^{-17}$ & 26 & 35 & 23  \\
128   & $0.95$    & -0.0140 & 0.249 & $6.4 \cdot 10^{-17}$ & 30 & 37 & 28  \\
128   & $0.96$    & -0.0168 & 0.249 & $6.6 \cdot 10^{-17}$ & 37 & 43 & 34  \\
128   & $0.97$    & -0.0185 & 0.250 & $-1.4 \cdot 10^{-16}$ & 48 & 58 & 46  \\
128   & $0.98$    & -0.0192 & 0.250 & $-2.4 \cdot 10^{-17}$ & 71 & 89 & 69  \\
128   & $0.99$    & -0.0193 & 0.250 & $-1.2 \cdot 10^{-16}$ & 140 & 187 & 138 \\ \midrule
256   & $0.90$    & -0.0080 & 0.249 & $6.6 \cdot 10^{-17}$ & 17 & 29 & 14 \\
256   & $0.91$    & -0.0076 & 0.249 & $4.3 \cdot 10^{-17}$ & 18 & 31 & 15  \\
256   & $0.92$    & -0.0076 & 0.249 & $7.3 \cdot 10^{-17}$ & 20 & 35 & 17  \\
256   & $0.93$    & -0.0081 & 0.250 & $-8.7 \cdot 10^{-18}$ & 22 & 40 & 20 \\
256   & $0.94$    & -0.0081 & 0.249 & $-2.3 \cdot 10^{-17}$ & 26 & 41 & 23  \\
256   & $0.95$    & -0.0072 & 0.249 & $5.9 \cdot 10^{-17}$ & 31 & 45 & 28  \\
256   & $0.96$    & -0.0070 & 0.249 & $3.1 \cdot 10^{-17}$ & 37 & 49 & 34  \\
256   & $0.97$    & -0.0112 & 0.249 & $2.4 \cdot 10^{-17}$ & 48 & 55 & 46  \\
256   & $0.98$    & -0.0168 & 0.250 & $1.8 \cdot 10^{-16}$ & 72 & 80 & 69  \\
256   & $0.99$    & -0.0193 & 0.250 & $-6.9 \cdot 10^{-18}$ & 141 & 189 & 138 \\
256   & $0.995$   & -0.0193 & 0.250 & $1.5 \cdot 10^{-16}$ & 279 & 374 & 277 \\ \midrule
512   & $0.90$    & -0.0086 & 0.250 & $-2.3 \cdot 10^{-17}$ & 16 & 28 & 14 \\
512   & $0.91$    & -0.0085 & 0.250 & $-1.6 \cdot 10^{-17}$ & 18 & 31 & 15  \\
512   & $0.95$    & -0.0079 & 0.250 & $1.4 \cdot 10^{-16}$ & 31 & 46 & 28  \\
512   & $0.97$    & -0.0081 & 0.250 & $-9.9 \cdot 10^{-17}$ & 49 & 67 & 46  \\
512   & $0.98$    & -0.0073 & 0.250 & $2.0 \cdot 10^{-16}$ & 72 & 91 & 69  \\
512   & $0.99$    & -0.0169 & 0.250 & $2.9 \cdot 10^{-16}$ & 141 & 171 & 138 \\
512   & $0.995$   & -0.0193 & 0.250 & $1.8 \cdot 10^{-16}$ & 279 & 385 & 277 \\ \midrule
1024  & $0.98$    & -0.0076 & 0.250 & $-5.6 \cdot 10^{-17}$ & 72 & 116 & 69  \\
1024  & $0.99$    & -0.0080 & 0.250 & $-2.9 \cdot 10^{-16}$ & 141 & 189 & 138 \\
1024  & $0.995$   & -0.0159 & 0.250 & $2.5 \cdot 10^{-16}$ & 280 & 439 & 277 \\ \bottomrule
\end{tabular}}
\label{Table:Sched}
\end{table}

\begin{figure}
\centering
\subfloat[$\lambda=0.99$]{
\resizebox*{6cm}{!}{\includegraphics{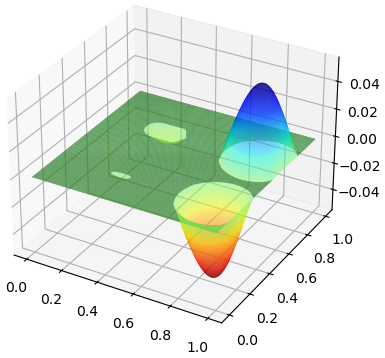}}\label{Fig:Schedule512a}}\hspace{5pt}
\subfloat[$\lambda=0.995$]{
\resizebox*{6cm}{!}{\includegraphics{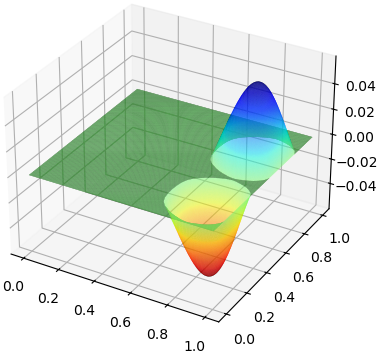}}\label{Fig:Schedule512b}}
\caption{Computed solutions with schedule for $1/h = 512$.}
\label{Fig:Schedule512}
\end{figure}

In \cite{Gotoh2017} they also tested a schedule for the constraint parameter $K$, which produced better results.
This means that, instead of using a fixed $K$ during the algorithm, one starts with $K = \mu(\Om)$ and decreases $K$ at every iteration until the target parameter is met.

Let $K \in (0, \mu(\Om))$ be the actual constraint parameter and $\lambda \in (0,1)$ a scheduling parameter.
Then in each iteration $k$ we replace the actual $K$ by
\[
K_k := \max\left( \lambda K_{k-1}, K \right),
\]
where $K_{-1} := \mu(\Om)$.
Obviously, in the implementation the stopping criterion $u^{k+1} = u^k$ is only active as soon as $K_k = K$.

In \cref{Table:Sched} one can see the results for different scheduling parameters $\lambda$ and different levels of discretization.
The last column indicates how many iterations are needed until the target parameter $K$ is met, i.e., $K_k = K$.
It shows that the number of iterations needed by the algorithm is (up to three iterations) equal to the iterations needed by the schedule.

In \cref{Fig:Schedule512} the computed solutions for two different scheduling parameters are depicted.
It indicates that the significantly better functional value for larger $\lambda$, and also compared to no schedule, cf. \cref{Fig:Solution512}, is due to a change in the support of the computed solution.

As shown by \cref{Table:Sched}, the size of the scheduling parameter has to be larger for a finer mesh to reach the better solution.
Moreover, at lower levels of discretization, we can observe some sort of convergence in the objective function values $f(u_\lambda^*)$ as $\lambda \nearrow 1$.
It can be expected that this can also be observed for higher levels of discretization, if $\lambda \nearrow 1$.

\subsubsection{Comparison with $L^1$-Penalization} \label{Num_section:L1}

The most famous method to induce sparsity is a $L^1$-penalization.
A possible relaxation for our original problem \eqref{Problem} would hence be
\[
\min_{u \in H_0^1(\Om)} \frac{1}{2} \Vert \nabla u \Vert_{L^2(\Om)}^2 - \int_{\Om} g u \dm + \beta \Vert u \Vert_{L^1(\Om)}, \qquad \beta > 0,
\]
with the approximate discretization
\begin{align}\label{ProblemL1}
\min_{u \in \R^N} \frac{1}{2} u^T A u - b^T u + \beta \Vert u_h \Vert_{1,h} =: \phi(u),
\end{align}
with $\Vert u_h \Vert_{1,h}$ from \cref{Def:Approx} and \cref{Theo:Approx}.
It is well known that for large enough $\beta$ the only solution of such $L^1$-penalized problems is zero, see, e.g., \cite{Kim2007}.

Problem \eqref{ProblemL1} can  be solved by using SpaRSA \cite{Sparsa2009}.
We use the simple non-monotone scheme with $u^0 = A^{-1} b$ and the following choice of parameters:
\[
M = 5, \quad \alpha_{\max} = 10^{20}, \quad \alpha_{\min} = \frac{1}{\alpha_{\max}}, \quad \eta = 2, \quad \sigma = 0.01.
\]
Furthermore, we stop the algorithm if
\[
\frac{\vert \phi(u^k) - \phi(u^{k-1}) \vert}{\vert \phi(u^{k-1}) \vert} \leq 10^{-5} \quad \text{and} \quad \frac{\Vert u^k - u^{k-1} \Vert_2}{\Vert u^k \Vert_2} \leq 10^{-5}.
\]
Note that the penalty parameter $\beta$ has to be tuned to reach a desired level of sparsity.

The results for different levels of sparsity can be seen in \cref{Table:L1}, where \textit{iter.} is the number of iterations needed and $\beta$ is the tuned penalty parameter.
The computed solution for $1/h = 512$ is depicted in \cref{Fig:SolutionL1}.
Note that the computed solution for the $L^1$-penalization is smoother compared to our approach, where there seems to be a nonsmoothness at the border of the support.
With respect to the objective function value, SpaRSA produces better results than our plain algorithm in lower levels of discretization.
However, if we use the scheduling from \cref{Num_section:Sched}, our algorithm clearly outperforms the $L^1$-approach.
Besides that, no tuning of the penalty parameter has to be done.
Moreover, it seems that the $L^1$-approach cannot handle larger levels of discretization that well since the objective function value increases again for $h \searrow 0$.
Nevertheless, with respect to the support, the computed solutions look similar to our solutions with scheduling, see \cref{Fig:SolutionL1} and \cref{Fig:Schedule512b}.

\begin{table}
\tbl{Numerical results for \cref{Num_section:L1}.}
{\begin{tabular}{lccccc} \toprule
$1/h$ & $f(u_h^*)$ & $\Vert u_h^* \Vert_0$ & feas. & iter. & $\beta$ \\ \midrule
16   & -0.0078 & 0.246 & $-3.5 \cdot 10^{-18}$ & 28 & 4.360   \\
32   & -0.0088 & 0.250 & $3.5 \cdot 10^{-18}$ & 50 & 4.260  \\
64   & -0.0091 & 0.250 & $-1.7 \cdot 10^{-18}$ & 82 & 4.215   \\
128  & -0.0092 & 0.250 & $3.5 \cdot 10^{-18}$ & 156 & 4.190   \\
256  & -0.0092 & 0.250 & $0$ & 466 & 4.170   \\
512  & -0.0091 & 0.250 & $-4.3 \cdot 10^{-17}$ & 772 & 4.165   \\
1024 & -0.0089 & 0.250 & $3.9 \cdot 10^{-16}$ & 1132 & 4.125   \\ \bottomrule
\end{tabular}}
\label{Table:L1}
\end{table}

\begin{figure}
\centering
\subfloat[Computed solution $u_{L^1}^*$ for problem \eqref{ProblemL1} with SpaRSA \cite{Sparsa2009}.]{
\resizebox*{6cm}{!}{\includegraphics{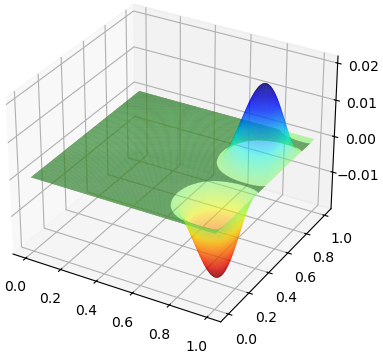}}
\label{Fig:SolutionL1}}\hspace{5pt}
\subfloat[Computed solution for $u^0 = u^*_{L^1}$.]{
\resizebox*{6cm}{!}{\includegraphics{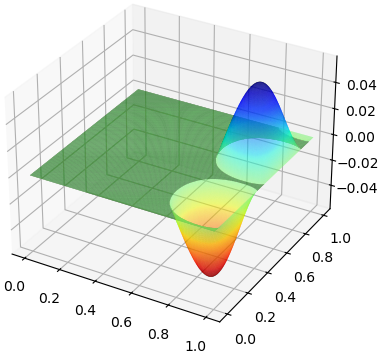}}
\label{Fig:SolutionInitL1}}
\caption{}
\end{figure}

\subsubsection{Dependence on Initialization}\label{Num_section:Init}

\begin{figure}
\centering
\subfloat[$a_i = 1: f(u^*) = -0.0134$]{
\resizebox*{4cm}{!}{\includegraphics{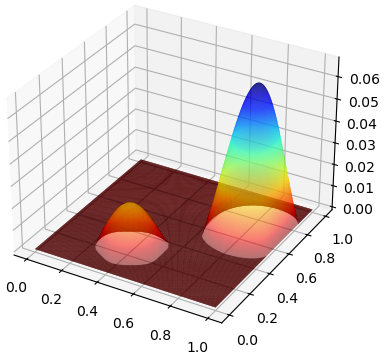}}}\hspace{5pt}
\subfloat[$a_i = -1: f(u^*) = -0.0131$]{
\resizebox*{4cm}{!}{\includegraphics{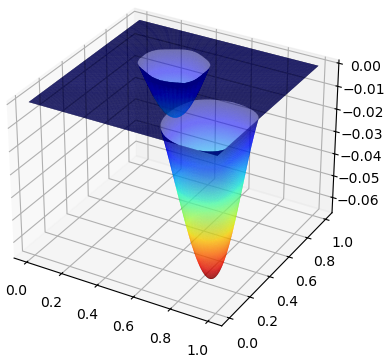}}}\hspace{5pt}
\subfloat[$a_i = \sign(b_i): f(u^*) = -0.0193$]{
\resizebox*{4cm}{!}{\includegraphics{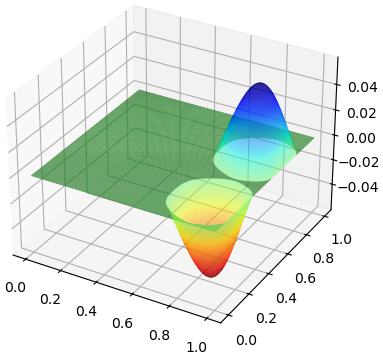}}}
\caption{Computed solutions for $u^0 = 0$, $\lambda=0.995$ and different subgradient selections with their respective functional values.}
\label{Fig:SolutionInit0}
\end{figure}

Another generic starting point is $u^0 = 0$.
However, this does not work without some changes.
This is due to the fact that we usually choose $s_i^k = 0$ if $u_i = 0$, see \cref{Alg:Subgradient}, and hence the DC subproblem degenerates to a $L^1$-penalized problem with zero solution, since $\rho$ is usually large in the sense of \cref{Num_section:L1}.
This problem can be overcome by choosing the subgradient $s^k$ in another way.

For $u^0 = 0$ we always apply the schedule from \cref{Num_section:Sched} with $\lambda = 0.995$ and stop the semismooth Newton method if $\Vert F_\tau(u) \Vert_2 \leq 10^{-10}$.
It turns out that the subgradient selection has a huge impact in the case $u^0 = 0$.
Natural choices for $a_i$ with $u_i^k=0$ in step (S.3) of \cref{Alg:Subgradient} would be $a_i = 1$ and $a_i = -1$.
A more problem specific choice is for example $a_i = \sign(b_i)$ since, in order to minimize the objective functional, $\sign(u_i) = \sign(b_i)$ seems to be advantageous.
The computed solutions for these choices of $a_i$ are depicted in \cref{Fig:SolutionInit0}.
The choice $a_i = \sign(b_i)$ seems to work very well since it reproduces the best functional value so far.

Furthermore, the non-negative (non-positive) computed solutions for $u^0 = 0$ and $a_i = 1$ ($a_i = -1$), see \cref{Fig:SolutionInit0}, are reproduced independently of the subgradient selection when using $u^0 = 1$ ($u^0 = -1$).

We moreover tested using the solution $u^*_{L^1}$ of the $L^1$-penalized problem from \cref{Num_section:L1} as a starting vector to our plain algorithm without scheduling.
This produced the solution from \cref{Fig:SolutionInitL1} and \cref{Table:InitL1}.
With respect to objective value the solution is not quite as good as the one starting at $u^0 = A^{-1} b$ and having a schedule with $\lambda=0.995$.
Nevertheless, this shows that our algorithm is first capable of and second needs very few iterations to significantly improve the solution of the $L^1$-problem.

\begin{table}
\tbl{Numerical results for \cref{Num_section:Init} with $u^0 = u^*_{L^1}$.}
{\begin{tabular}{lccccc} \toprule
$u$ & $f(u)$ & $\Vert u \Vert_0$ & feas. & DC & ssN \\ \midrule
$u^0$   & -0.0091    & 0.250  & $-4.3 \cdot 10^{-17}$ & -- & -- \\
$u^*$   & -0.0186    & 0.250  & $2.4 \cdot 10^{-16}$ & 2 & 2 \\ \bottomrule
\end{tabular}}
\label{Table:InitL1}
\end{table}

\section{An Optimal Control Problem}\label{Section:ControlProblem}

As a proof of concept, we report about numerical results to solve the following sparse optimal control problem:
Minimize
\begin{equation}\label{OptControlProblem}
\frac{1}{2} \Vert y - y_d \Vert_{L^2(\Om)}^2 + \frac{\alpha}{2} \Vert u \Vert_{L^2(\Om)}^2 + \frac{\beta}{2} \Vert \nabla u \Vert_{L^2(\Om)}^2
\end{equation}
over all $(u,y) \in H_0^1(\Om) \times H_0^1(\Om) $
satisfying the constraints
\begin{align*}
- \Delta y &= u \quad \text{ on } \Om, \nonumber\\
y &= 0 \quad \text{ on } \partial\Om, \nonumber\\
\Vert u \Vert_0 &\leq K.
\end{align*}
Here, $y_d \in L^2(\Om)$ is the desired state , $\alpha, \beta > 0$ and $K \in (0, \mu(\Om))$ are given parameters.
We use the reduced formulation
\begin{align*}
\min_{u \in H_0^1(\Om)} \frac{1}{2} \Vert Su - y_d \Vert_{L^2(\Om)}^2 + \frac{\alpha}{2} \Vert u \Vert_{L^2(\Om)}^2 + \frac{\beta}{2} \Vert \nabla u \Vert_{L^2(\Om)}^2 \quad \text{subject to} \quad \Vert u \Vert_0 \leq K,
\end{align*}
with the linear and bounded operator $S:=(-\Delta)^{-1} : L^2(\Omega) \to H_0^1(\Om)$. This problem fits into the setting \eqref{MinProblem}.
Moreover, all of our assumptions are satisfied.
Inspired by \cite{Stadler2009}, we set
\[
y_d = \frac{1}{6} \sin(2\pi x) \sin(2\pi y) \exp(2x)
\]
and $\alpha = 10^{-7}$. If not mentioned otherwise, we use $\beta = \alpha$.
In order to solve this problem numerically, we use our framework from \cref{Section:ExampleProblem} including the parameter and algorithmic setup from \cref{section:NumExperiments} with the default mesh size $h = 1/256$.
If we use the scheduling from \cref{Num_section:Sched}, we set $\lambda = 0.99$.
Note that the numerical example \cite[Example 2]{Stadler2009} can be interpreted as the $L^1$-relaxation of problem \eqref{OptControlProblem} with $\beta = 0$ (in the space $L^2(\Om)$).

The number of iterations needed by our algorithm without the scheduling from \cref{Num_section:Sched} is presented in \cref{Table:StadlerIter}.
The qualitative behavior of the solutions for $\beta \searrow 0$ is shown in \cref{Fig:controls_beta}, \cref{Fig:states_beta}, \cref{Fig:controls_beta_sched} and \cref{Fig:states_beta_sched}.
As can be seen, the support of $u$ has a finer structure with vanishing $\beta$, in addition, solutions for smaller $\beta$ appear to be less regular.
However, the corresponding state obviously approximates the desired state $y_d$ with respect to the $L^2(\Om)$-norm better for smaller $\beta$, see \cref{Table:beta}.
Moreover, the scheduling from \cref{Num_section:Sched} improves the computed solution.

\begin{table}
\tbl{Number of DC/ssN iterations needed to solve \eqref{OptControlProblem} without the scheduling from \cref{Num_section:Sched} for different values of $K$ and different levels of discretization.}
{\begin{tabular}{lcccc} \toprule
$K$           & $0.5$ & $0.25$ & $0.1$ & $0.01$ \\ \midrule
$h = 1/32$    & 3/3   & 3/3    & 3/3   & 3/3       \\
$h = 1/64$    & 3/3   & 4/4    & 3/3   & 5/5       \\
$h = 1/128$   & 3/3   & 3/3    & 3/3   & 4/4       \\
$h = 1/256$   & 3/3   & 3/3    & 4/4   & 4/4       \\
$h = 1/512$   & 4/4   & 3/3    & 3/3   & 4/4       \\ \bottomrule
\end{tabular}}
\label{Table:StadlerIter}
\end{table}

\begin{table}
\tbl{Values of $\Vert Su - y_d \Vert_{L^2(\Om)}$ without and with the scheduling from \cref{Num_section:Sched} for $K = 0.25$ and different values of $\beta$.}
{\begin{tabular}{lccccccc} \toprule
$\beta$  & $10^{-7}$            & $10^{-8}$            & $10^{-9}$            & $10^{-10}$           & $10^{-11}$              & $10^{-12}$           & $10^{-13}$     \\ \midrule
without  & $1.14 \cdot 10^{-1}$ & $5.78 \cdot 10^{-2}$ & $3.91 \cdot 10^{-2}$ & $3.69 \cdot 10^{-2}$ & $3.52 \cdot 10^{-2}$ & $3.38 \cdot 10^{-2}$ & $3.31 \cdot 10^{-2}$ \\
with     & $7.84 \cdot 10^{-2}$ & $3.70 \cdot 10^{-2}$ & $2.64 \cdot 10^{-2}$ & $1.54 \cdot 10^{-2}$ & $8.15 \cdot 10^{-3}$ & $4.56 \cdot 10^{-3}$ & $3.02 \cdot 10^{-3}$ \\ \bottomrule
\end{tabular}}
\label{Table:beta}
\end{table}

\begin{figure}
\centering
\subfloat[$\beta = 10^{-7}$]{
\resizebox*{4cm}{!}{\includegraphics{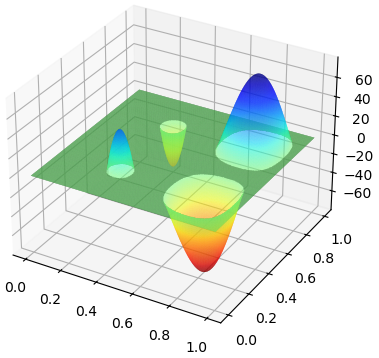}}}\hspace{5pt}
\subfloat[$\beta = 10^{-10}$]{
\resizebox*{4cm}{!}{\includegraphics{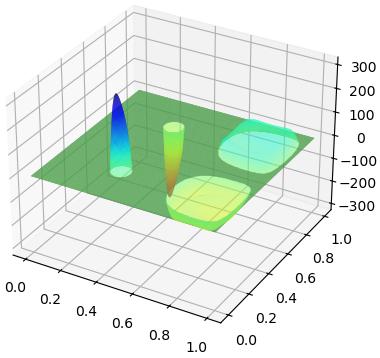}}}\hspace{5pt}
\subfloat[$\beta = 10^{-13}$]{
\resizebox*{4cm}{!}{\includegraphics{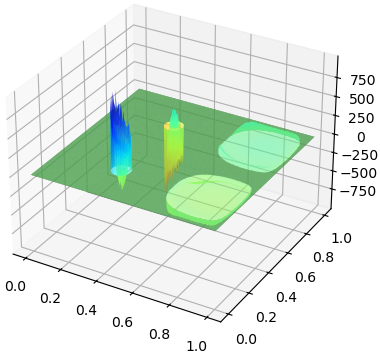}}}
\caption{Computed controls for $K = 0.25$ without the scheduling from \cref{Num_section:Sched} for different values of $\beta$.}
\label{Fig:controls_beta}
\subfloat[$\beta = 10^{-7}$]{
\resizebox*{4cm}{!}{\includegraphics{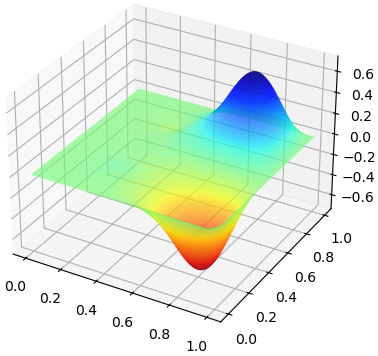}}}\hspace{5pt}
\subfloat[$\beta = 10^{-10}$]{
\resizebox*{4cm}{!}{\includegraphics{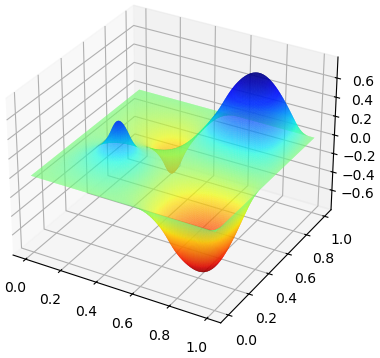}}}\hspace{5pt}
\subfloat[$\beta = 10^{-13}$]{
\resizebox*{4cm}{!}{\includegraphics{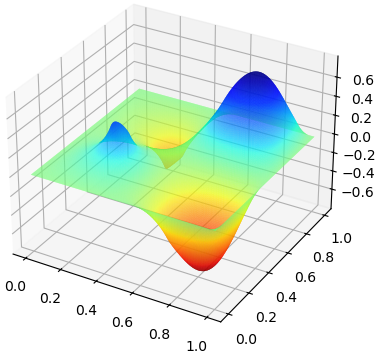}}}
\caption{Computed states for $K = 0.25$ without the scheduling from \cref{Num_section:Sched} for different values of $\beta$.}
\label{Fig:states_beta}
\subfloat[$\beta = 10^{-7}$]{
\resizebox*{4cm}{!}{\includegraphics{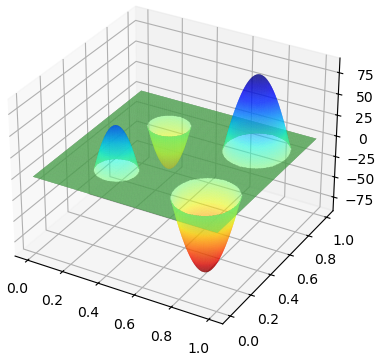}}}\hspace{5pt}
\subfloat[$\beta = 10^{-10}$]{
\resizebox*{4cm}{!}{\includegraphics{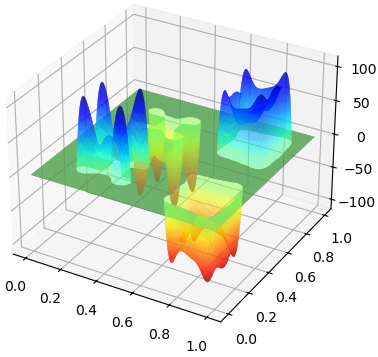}}
}\hspace{5pt}
\subfloat[$\beta = 10^{-13}$]{
\resizebox*{4cm}{!}{\includegraphics{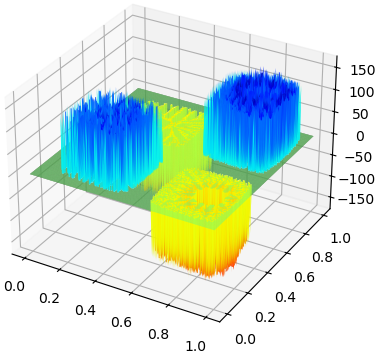}}}
\caption{Computed controls for $K = 0.25$ with the scheduling from \cref{Num_section:Sched} for different values of $\beta$.}
\label{Fig:controls_beta_sched}
\subfloat[$\beta = 10^{-7}$]{
\resizebox*{4cm}{!}{\includegraphics{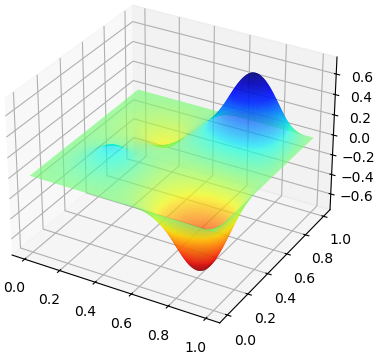}}}\hspace{5pt}
\subfloat[$\beta = 10^{-10}$]{
\resizebox*{4cm}{!}{\includegraphics{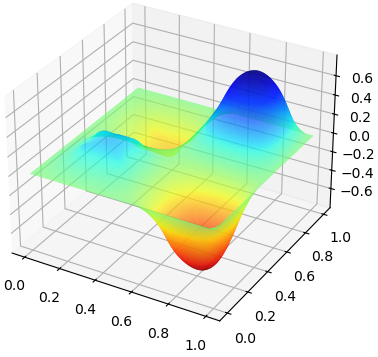}}}\hspace{5pt}
\subfloat[$\beta = 10^{-13}$]{
\resizebox*{4cm}{!}{\includegraphics{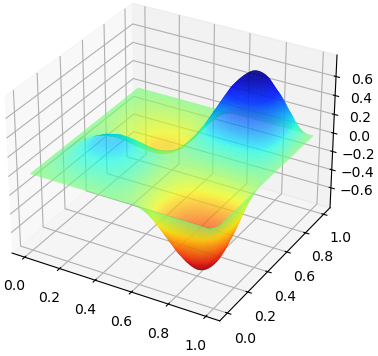}}}
\caption{Computed states for $K = 0.25$ with the scheduling from \cref{Num_section:Sched} for different values of $\beta$.}
\label{Fig:states_beta_sched}
\end{figure}

\section*{Funding}

This research was partially supported by the German Research Foundation DFG under project grant Wa 3626/3-2.

\bibliographystyle{tfs}
\bibliography{Literatur}

\end{document}